\documentclass[bibliography=totocnumbered,11pt]{article}
\usepackage{bbm}
\usepackage{amssymb,amsthm,amsmath}   
\usepackage[nottoc,numbib]{tocbibind}
\usepackage[font=small,labelfont=bf,justification=centerlast]{caption}
\usepackage{hyperref}
\usepackage{varioref}
\usepackage[utf8]{inputenc}
\usepackage[T1]{fontenc}
\usepackage{color}
\usepackage{graphics}
\usepackage[dvips]{epsfig}
\usepackage{psfrag}\graphicspath{ {./illustrations/} }
\usepackage{float}
\usepackage{amsthm}
\usepackage{pinlabel}
\usepackage{textcomp}
\usepackage{faktor}
\usepackage{array}
\usepackage{subcaption}
\usepackage{tikz-cd}
\usepackage{enumerate}
\usepackage[⟨options⟩]{overpic}

\usepackage[all]{xy}

\theoremstyle{definition}
\newtheorem{definition}{Definition}[section]
\newtheorem{example}{Example}[section]
\newtheorem{remark}{Remark}[section]

\theoremstyle{plain}
\newtheorem{theorem}{Theorem}[section]
\newtheorem*{theorem*}{Theorem}

\newtheorem{cor}{Corollary}[section]
\newtheorem{prop}{Proposition}[section]
\newtheorem{lemma}{Lemma}[section]

\AtEndDocument{\bigskip{\footnotesize%
  \textsc{Salammbo Connolly, Universit\'e Paris-Saclay, CNRS, Laboratoire de mathématiques d'Orsay, 91405, Orsay, France.} \par
  \textit{E-mail address}: \texttt{salammbo.connolly@universite-paris-saclay.fr} \par
}}

\title{Continuation maps for the Morse fundamental group}
\author{Salammbo Connolly}

\begin{document}
\maketitle

\begin{abstract}
We study properties of the continuation map for the Morse fundamental group $\pi_1^\text{Morse}(f,\ast)$ associated to a Morse-Smale pair $(f,g)$ on a manifold $M$. We get a morphism between $\pi_1^\text{Morse}(f_0,\ast_0)$ and $\pi_1^\text{Morse}(f_1,\ast_1)$  and show that it is functorial. We also define the morphism in the case of Morse data over different manifolds, thanks to the use of grafted trajectories. Finally, given an interpolation function on $M\times\mathbb{R}$ between two Morse functions (used for example to define the continuation map), we study the Morse fundamental group associated to that function and show that it is isomorphic to a relative fundamental group on $M\times\mathbb{R}$.    
\end{abstract}

\section{Introduction}
Morse theory is a powerful tool which allows us to extract topological invariants of smooth manifolds from the dynamical properties of gradient fields of certain functions on that manifold. In particular, one may recover the singular homology of the manifold thanks to a set of Morse data $(f,g)$, where $f$ is a Morse function over a manifold $M$, and $g$ is a metric on $M$. This inspired Floer homology, one of the most powerful tools of symplectic topology, which allows one to build an invariant from a set of Floer data $(M,\omega,H,J)$, where $(M,\omega)$ is a symplectic manifold, $H$ is a Hamiltonian function on $M$, and $J$ is an almost complex structure which is compatible with $\omega$. 

Another invariant of $M$ one can obtain from Morse theory is the fundamental group: its generators are built out of unstable manifolds of index 1 critical points of $f$, and relations are obtained by looking at the boundary of unstable manifolds of index 2 critical points. One can also see this construction in terms of moduli spaces of the form 
\[\mathcal{M}(x,M)=\{\gamma:\mathbb{R}_-\rightarrow M, \gamma'(t)=-\nabla f(\gamma(t)), \lim_{t\rightarrow-\infty} \gamma(t)=x, \gamma(0)\in M\},\]
thanks to the diffeomorphism 
\begin{equation}\label{iso:w_m} {\rm ev}: \mathcal{M}(x,M)\rightarrow W^u(x),\\ 
\gamma \mapsto \gamma(0),\end{equation} which to a trajectory starting in $x$ associates a point of the unstable manifold of $x$.
This point of view was adopted by Barraud in \cite{B} to build a Floer fundamental group constructed thanks to moduli spaces of solutions to the (potentially cut-off) Floer equation, which is isomorphic to $\pi_1(M)$. Barraud and Bertuol also built a construction of the fundamental group of $M$ using stable Morse theory \cite{BB}. 

One of the key elements to show that Morse or Floer homology are invariants of the (symplectic) manifold is the \textit{continuation map}, which was developed by Andreas Floer. This allows us to build a chain map from the Morse (or Floer) complex associated to one set of Morse data (or set of Floer data) to another. The idea is to take a smooth homotopy from one pair $(f,g)$, or $(H,J)$, to another, in such a way that there are Morse or Floer trajectories that go from generators of the first complex to generators of the second. This construction has been useful beyond Floer's work on proving invariance, for example in the construction of flow categories \cite{AB} or Kuranishi charts \cite{FOOO}. However, in the case of fundamental groups, it has not been done (although, in \cite{BB}, a different method is used, using hybrid trajectories). While continuation maps are not necessary to prove invariance of the Morse fundamental group (this is already a consequence of the isomorphism with the topological fundamental group), it is useful to get proofs of invariance that come purely from Morse or Floer theory, for example if one wants to get more applications, and especially if one wants to develop this tool in other settings. Furthermore, the continuation map being an interesting structure in and of itself, studying it in a less homological manner gives us additional insight.

The goal of this paper is therefore to study the properties of continuation maps for the Morse fundamental group. 

\subsection{The Morse fundamental group}

Let $f:M\rightarrow\mathbb{R}$ be a Morse function on a closed manifold $M$, and $g$ a metric on $M$. We assume that the pair satisfy the Smale condition, that is that for any pair $(a,b)$ of critical points of $f$, the unstable manifold of $a$ intersects the stable manifold of $b$ transversally. This is a generic condition. 

We will now recall how to retrieve the fundamental group of $M$ from the Morse data. We will use some of the approach and terminology given by Barraud in \cite{B}. The main objects in the Morse construction of the fundamental group are unstable manifolds of index 1 and 2 critical points. Recall that unstable manifolds of index $n$ critical points are disks of dimension $n$. Since we would like to use compact objects, rather than considering the compactification in $M$ of the unstable manifolds, we will consider Latour cells, which were introduced by Latour in 1994 \cite{Lat} (see also \cite{Qin},\cite{BDHO} for good expositions), which use the trajectory point of view provided by the diffeomorphism \eqref{iso:w_m} and includes the data provided by the moduli spaces 
\[\mathcal{M}(x,y)=\faktor{\{\gamma:\mathbb{R}\rightarrow M, \gamma'=-\nabla f(\gamma), \lim_{t\rightarrow-\infty} \gamma(t)=x, \lim_{t\rightarrow+\infty} \gamma(t)=y\}}{\mathbb{R}},\] and their compactification by moduli spaces of broken trajectories:
$$\overline{\mathcal{M}}(x,y)=\bigcup_{x>p_1>…>p_i>y}\mathcal{M}(x,p_1)\times\mathcal{M}(p_1,p_2)\times\dots\mathcal{M}(p_i,y).$$
\begin{definition}[Latour] Let $x\in\text{Crit}(f)$. Then its unstable Latour cell is defined as
$$\overline{\mathcal{M}}(x,M):=\mathcal{M}(x,M)\cup\bigcup_{y\in\text{Crit}(f)}\overline{\mathcal{M}}(x,y)\times \mathcal{M}(y,M).$$
\end{definition} We may consider the natural topology on these cells which makes them compact. 

The evaluation map \eqref{iso:w_m} extends to the boundary of Latour cells in the following way:
\begin{equation}\label{iso:w_u_compact} {\rm ev}:\overline{\mathcal{M}}(x,y)\times \mathcal{M}(y,M)\rightarrow W^u(y),\\ 
([\gamma_0],[\gamma_1],…,[\gamma_i],\gamma) \mapsto \gamma(0),\end{equation} 
This map on Latour cells is in general not a diffeomorphism, but Latour cells still carry nice properties that make them natural objects to consider in our setting. The following theorem was proven by Latour \cite{Lat}, as well as Qin \cite{Qin}. 
\begin{theorem}[Latour, Qin] Let $x\in{\rm Crit}_k(f)$. Then its unstable Latour cell is a compact manifold of dimension $k$ with boundary and corners, of which the interior is diffeomorphic to $W^u(x)$. Moreover, it is homeomorphic to a closed disk $D^k$.
\end{theorem}

Latour cells for index 1 critical points therefore have the topology of a closed interval. We give ourselves the following definition:

\begin{definition} A \textit{Morse step} (or \textit{$f$-step} when specifying the Morse function is necessary) is the data of the unstable Latour cell of an index 1 critical point of $f$, with a choice of orientation.\end{definition}

Essentially, Morse steps are the paths with which we will build the loops that generate the fundamental group. 

If a step corresponds to the Latour cell of a point $x\in{\rm Crit}_1(f)$, we say that it is a step \textit{through} $x$. Since they are oriented intervals, we can define a start and an end, which are boundary elements of the form $([\gamma],\gamma_y)$, where $\gamma$ is a trajectory from $x$ to an index 0 critical point $y$, and $\gamma_y$ is the constant trajectory at $y$. 
We say that a step {\em begins} at an index 0 critical point $y_0$ if the beginning of the interval is of the form $([\gamma_0],\gamma_{y_0})$, and {\em ends} at an index 0 critical point $y_1$ if the end is of the form $([\gamma_1],\gamma_{y_1})$. Two steps are {\em consecutive} if the first ends where the second one begins. 

\begin{definition} A \textit{Morse loop} (or \textit{$f$-loop} when specifying the Morse function is necessary) is a word $\sigma_1\sigma_2…\sigma_k$, where $\sigma_i$ and $\sigma_{i+1}$ are consecutive Morse steps, and such that the start of $\sigma_1$ is the end of $\sigma_k$. \end{definition}

Since we want to define a fundamental group, we will need a base point. Generically, one may choose a point $\ast$ such that it does not belong to the stable manifold of any critical point of index greater or equal to 1. There therefore exists a unique gradient trajectory $\gamma_\ast$ such that $\gamma_\ast(0)=\ast$, and the limit at $+\infty$ of $\gamma_\ast$ is a a minimum of $f$. We call $\gamma_\ast^{-1}$ the trajectory which goes in the opposite direction, ie $\gamma_\ast^{-1}(t)=\gamma_\ast(-t)$. 

\begin{definition} A based Morse loop in $\ast$ is a word of the form $\gamma_\ast\sigma_1…\sigma_k\gamma_\ast^{-1}$, where $\sigma_1…\sigma_k$ is a Morse loop such that $\sigma_1$ begins, and $\sigma_k$ ends, in the constant trajectory at the minimum reached by $\gamma_\ast$. 
\end{definition}

We call $\tilde{\mathcal{L}}(f,\ast)$ the set of Morse loops based at $\ast$. We may equip it with an equivalence relation:
$$\gamma_\ast\sigma_1…\sigma_{i-1}\sigma_i\sigma_{i}^{-1}\sigma_{i+1}…\sigma_k\gamma_\ast^{-1}\sim\gamma_\ast\sigma_1…\sigma_{i-1}\sigma_{i+1}…\sigma_k\gamma_\ast^{-1},$$ where $\sigma_i^{-1}$ is the same unstable manifold as $\sigma_i$ with the opposite orientation. We then pose $\mathcal{L}(f,\ast)=\tilde{\mathcal{L}}(f,\ast)/\!\!\sim$. This set is naturally equipped with a group structure where the operation is $$(\gamma_\ast\sigma_1…\sigma_i\gamma_\ast^{-1})\cdot(\gamma_\ast\sigma'_1…\sigma'_j\gamma_\ast^{-1})=\gamma_\ast\sigma_1…\sigma_i\sigma_1'…\sigma'_j\gamma_\ast^{-1}.$$

One may then consider the evaluation map \eqref{iso:w_m} on Morse loops: this gives a map from a step through $x$ to the unstable manifold of $x$, which forms a topological path in $M$. One may naturally concatenate the evaluations of two consecutive steps in $M$ as paths in $M$, and the evaluation of a Morse loop is therefore a topological loop in $M$. One may therefore consider an alternative but equivalent definition of $\mathcal{L}^{\rm Morse}(f,\ast)$ as the group generated by concatenations of unstable manifolds of index 1 critical points of $f$, conjugated by the path through the base point. 

For relations, we will look at Latour cells of index 2 critical points.
This is a topological 2-disk, and its boundary is formed of moduli spaces of the form $$\mathcal{M}(z,p)\times \overline{\mathcal{M}}(p,M),$$ where $|p|<2$. Since the boundary of the Latour cell is a circle, one may order these boundary components along this circle. Since for any index 1 critical point $x$, $\overline{\mathcal{M}}(x,M)$ has two boundary components which correspond to breaks in index 0 critical points, this order therefore alternates between breaks in index 0 and index 1 components. One can then consider the ordered periodic sequence of components of the form $$\mathcal{M}(z,x)\times \overline{\mathcal{M}}(x,M), |x|=1$$ along the boundary of $\overline{\mathcal{M}}(z,M)$. Each such component corresponds to a Morse step, and two components which appear consecutively in this sequence correspond to consecutive Morse steps. The whole sequence therefore corresponds to a Morse loop $\ell$ which is uniquely determined by $z$ up to shift in the letters of the word, and we write $\ell = \partial W^u(z)$. Equivalently, the evaluation of the boundary of $\overline{\mathcal{M}}(z,M)$ gives us a periodic sequence of concatenated unstable manifolds of index 1 critical points, and so we may say that $\ell=\partial W^u(z)$ if, and only if, ${\rm ev}(\ell)={\rm ev}(\partial \overline{\mathcal{M}}(z,M))$. 

We then call $\tilde{\mathcal{R}}(f,\ast)$ the normal subgroup of $\tilde{\mathcal{L}}(f,\ast)$ generated by loops of the form $$\gamma_\ast\sigma_1…\sigma_k\ell\sigma_k^{-1}…\sigma_1^{-1}\gamma_\ast^{-1},$$ where $\ell = \partial W^u(z)$ for some $z\in\text{Crit}_2(f)$, and set $\mathcal{R}(f,\ast)=\tilde{\mathcal{R}}(f,\ast)/\hspace{-4pt}\sim$. We then set $$\pi_1^{\rm Morse}(f,\ast):=\mathcal{L}(f,\ast)/\mathcal{R}(f,\ast).$$
\begin{remark}Another equivalent definition of relations may be given by considering orbits of ``crocodile walk'', introduced in \cite{B}. While we do not explicitly use crocodile walks in this text, many of our discussions may be translated into that language, which furthermore becomes necessary when one steps out of the Morse setting and wishes to study moduli spaces with more complicated topologies than Latour cells. We therefore warmly direct any interested readers in the direction of this article. \end{remark}

\begin{theorem*}[folklore] Let $(f,g)$ be a Morse-Smale pair on a manifold $M$, and $\ast$ be a generic point on $M$. Then $$\pi_1^{\rm Morse}(f,\ast)\cong\pi_1(M,\ast).$$\end{theorem*}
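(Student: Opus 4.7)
The plan is to invoke the Morse-Thom-Smale-Laudenbach CW decomposition of $M$, in which the $k$-cells are the compactified unstable manifolds of index-$k$ critical points, with attaching maps given by the evaluation map on the boundary of the compactified moduli spaces. Once this decomposition is in place, the classical presentation of the fundamental group of a CW complex by generators and relations coming from its 2-skeleton translates almost verbatim into the definition of $\pi_1^{\rm Morse}(f,\ast)$.

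Concretely, I would proceed in three steps. First, let $\Gamma$ denote the 1-skeleton of this CW structure: a graph whose vertices are the index 0 critical points of $f$ and whose oriented edges are the Morse steps, attached to vertices via the ``start'' and ``end'' data described in the excerpt. By the standard presentation of the fundamental group of a graph, reduced edge-paths based at $\ast$ modulo back-tracking give $\pi_1(\Gamma,\ast)$, and this matches exactly $\mathcal{L}(f,\ast)$: the relation $\sigma_i\sigma_i^{-1}\sim\varnothing$ is precisely the edge-retraction relation.

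Second, I would attach the 2-cells. To each index 2 critical point $z$ corresponds a closed disk $\overline{W^u(z)}$ whose attaching circle is, after choosing a base vertex and a parametrization, the Morse loop obtained by evaluating along $\partial\overline{\mathcal{M}(z,\varnothing)}$ as described in the excerpt. The Seifert-van Kampen theorem then identifies $\pi_1$ of the resulting 2-complex $X$ with the quotient of $\pi_1(\Gamma,\ast)$ by the normal subgroup generated by these attaching words, i.e.\ with $\mathcal{L}(f,\ast)/\mathcal{R}(f,\ast)=\pi_1^{\rm Morse}(f,\ast)$. Third, since attaching cells of dimension $\geq 3$ does not affect the fundamental group, we conclude $\pi_1^{\rm Morse}(f,\ast)\cong\pi_1(X,\ast)\cong\pi_1(M,\ast)$.

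The main technical obstacle sits in the middle step: one must verify that the geometric attaching map of the 2-cell $\overline{W^u(z)}$ really is, up to based homotopy in $\Gamma$, the Morse loop $\partial W^u(z)$ used to define $\mathcal{R}(f,\ast)$. Because the evaluation map on the boundary of $\mathcal{M}(z,\varnothing)$ is not injective---each broken piece $\mathcal{M}(z,x)\times\overline{\mathcal{M}(x,\varnothing)}$ may have internal broken strata that collapse under $\mathrm{ev}$ onto lower-dimensional cells, as noted in the Remark---some care is required. Matching the geometric attaching map with the combinatorial Morse boundary is precisely the content of Laudenbach's construction of the Morse-Smale CW structure, whose invocation is the crucial ingredient.
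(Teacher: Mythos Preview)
The paper does not actually prove this statement: immediately after stating the folklore theorem it writes ``For a proof of this statement, one can see for example the PhD thesis of Florian Bertuol \cite{Be}.'' There is therefore no proof in the paper to compare against.

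That said, your proposal is correct and is precisely the classical argument one would expect behind the word ``folklore'': realise $M$ (up to homotopy) via the Morse--Smale CW structure, identify $\mathcal{L}(f,\ast)$ with the fundamental group of the $1$-skeleton, identify $\mathcal{R}(f,\ast)$ with the normal subgroup generated by the attaching words of the $2$-cells, and then invoke the cellular presentation of $\pi_1$ together with the fact that cells of dimension $\geq 3$ do not affect $\pi_1$. You have also correctly isolated the one genuine technical point---that the geometric attaching map of $\overline{W^u(z)}$ agrees, up to based homotopy in the $1$-skeleton, with the combinatorial Morse loop used to define $\mathcal{R}(f,\ast)$---and correctly attributed its resolution to the Laudenbach-type compactification. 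This is exactly the sort of argument the cited reference carries out in detail, so your sketch is on target; the only thing to add would be a precise citation (Laudenbach, \emph{A Morse complex on manifolds with boundary}, or the relevant section of Bertuol's thesis) at the point where you invoke that the compactified unstable manifolds yield an honest CW structure with the expected attaching maps.
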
 
For a proof of this statement, one can go to for example the PhD thesis of Florian Bertuol \cite{Be}. One can see the statement as a consequence of the fact that the fundamental group of a CW-complex can be computed by considering its 2-skeleton. A more explicit proof comes from making topological loops transverse to stable manifolds and then pushing them down by the gradient flow to unstable manifolds of index 1 and 0 critical points (one can then do the same for disks which represent homotopies between loops by floating them down to index 2 critical point).      

In order to better understand what the relations look like, and because it will be useful, we will define the following notion. For $z_1, z_2\in\text{Crit}_2(f)$ such that there exists a critical point $y$ in the intersection of their unstable manifolds, we say that the boundary parts $\overline{\mathcal{M}}(z_1,p)\times\mathcal{M}(p,M)$ and $\overline{\mathcal{M}}(z_2,p)\times\mathcal{M}(p,M)$ are \textit{compatible}. We then define the \textit{stitching} of their Latour cells as
$$\overline{\mathcal{M}}(z_1,M)\#\overline{\mathcal{M}}(z_2,M):= \overline{\mathcal{M}}(z_1,M)\cup\overline{\mathcal{M}}(z_2,M)/\sim,$$ where $(\gamma_1,\gamma)\sim(\gamma_2,\gamma)$ for any broken trajectories $\gamma_1\in\overline{\mathcal{M}}(z_2,p)$ and $\gamma_2\in\overline{\mathcal{M}}(z_1,p)$, and $\gamma\in\mathcal{M}(p,M)$. If the stitching of multiple index 2 Latour cells forms a disk, we say that it is a \textit{relation patch}. This is a natural notion, as the evaluation map naturally descends to relation patches. Its boundary is then a loop, of which the evaluation is a Morse loop. In this case, we say that a Morse loop \textit{bounds} a relation patch. The following lemma will come in handy:
\begin{lemma}\label{lemma:trivial_disks} Let $\ell$ be a Morse loop which bounds a relation patch. Then if $p$ is a sequence formed of the concatenation of $\gamma_\ast$ and a consecutive sequence of Morse steps, the Morse loop $p\ell p^{-1}$ is trivial in $\pi_1^{\rm Morse}(f,\ast)$.
\end{lemma}

\begin{proof} We will perform the proof inductively on the quantity $n$ of Latour cells which are stitched together. If $n=1$, the statement is trivial by definition of the relations in $\pi_1^{\rm Morse}(f,\ast)$. 

Suppose the statement is true for any relation patch formed of $n$ Latour cells. Consider now a relation patch formed of $n+1$ Latour cells, and call $\ell_{n+1}$ the loop that bounds it. Since each Latour cells is a disk, we can always remove one such that the rest is still a disk. Moreover, we can assume that the point at which $\ell_{n+1}$ is concatenated with $p$ does not lie in the boundary of that cell, which means we can remove that cell without any consideration of the base point. We call the loop that bounds this new collection of cells $\ell_n$, and write it as
$$\ell_n=\sigma_1…\sigma_k.$$ By our hypothesis, $p\ell_np^{-1}\in \mathcal{R}(f,\ast)$. 

Since we chose to remove a cell such that the patch stays a disk, the sequence of compatible boundary parts along which we stitched it to the rest must be connected, and in particular its image by the evaluation map forms a sequence of consecutive Morse steps. Then this sequence of consecutive steps must appear in $\ell_n$, as the cell that we removed is the obstruction to these steps appearing on the outside of the collection. We call $\sigma_i…\sigma_j$ this sequence. We may write the Morse loop which corresponds to the boundary of the removed cell as $$\sigma_i…\sigma_j\tilde{\sigma}_{p}…\tilde{\sigma}_{q},$$ since the boundary is connected and the sequence $\sigma_i…\sigma_j$ corresponds to a connected part of that boundary. Then $\tilde{\sigma}_{p}…\tilde{\sigma}_{q}$ must appear as a consecutive sequence of Morse steps in $\ell_{n+1}$, and this gives us
\begin{align*}p\ell_{n+1}p^{-1}&=p\sigma_1…\sigma_{i-1}\tilde{\sigma}^{-1}_{q}…\tilde{\sigma}^{-1}_{p}\sigma_{j+1}…\sigma_kp^{-1},\\
&\sim p\sigma_1…\sigma_{i-1}\tilde{\sigma}^{-1}_{q}…\tilde{\sigma}^{-1}_{p}\sigma_j^{-1}…\sigma_i^{-1}\sigma_i…\sigma_j\sigma_{j+1}…\sigma_kp^{-1},\\
p\ell_{n+1}p^{-1}&\sim p\sigma_1…\sigma_{i-1}\sigma_i…\sigma_j\sigma_{j+1}…\sigma_kp^{-1}=p\ell_np^{-1}\in\mathcal{R}(f,\ast),
\end{align*} because $\tilde{\sigma}^{-1}_{q}…\tilde{\sigma}^{-1}_{p}\sigma_j^{-1}…\sigma_i^{-1}$ is trivial as it corresponds to the boundary of a two dimensional Latour cell (in the opposite orientation as given before). This concludes our proof.
\end{proof}

\subsection{The main results} In this work, we use the continuation map set up to build a map $\tilde\phi_F$ from Morse steps associated to a Morse-Smale pair $(f_0,g_0)$ to another Morse-Smale pair $(f_1,g_1)$. More precisely, we build a function $F$ on $M\times\mathbb{R}$ such that 
$$\text{Crit}_k(F)=\text{Crit}_{k-1}(f_0)\times\{0\}\cup\text{Crit}_k(f_1)\times\{1\}.$$ We also consider a metric $G$ on $M\times\mathbb{R}$ such that the pair $(F,G)$ is Morse-Smale and such that the restriction of $(F,G)$ to $s=0$ (resp. $s=1$) matches $(f_0,g_0)$ (resp. $(f_1,g_1))$, and suppose that the $F$ is strictly decreasing between $s=0$ and $s=1$ in the $s$-direction. We call such a pair an \textit{interpolation pair}. Up to a small perturbation of the metric, for any minimum $\ast_0$ of $f_0$, there is a unique trajectory which goes from $(\ast_0,0)$ in the positive direction, which ends in a point in $\text{Crit}_1(f_1)\times\{1\}$, which we call $(\phi_F(\ast_0),1)$. We then use the Latour cells of points in $\text{Crit}_1(f_0)\times\{0\}$ to build a map $\tilde\phi_F$ and show that it induces a morphism between the fundamental groups:
\begin{theorem}\label{thm:interpolation_intro}
Given two Morse-Smale pairs $(f_0,g_0)$ and $(f_1,g_1)$ on a manifold $M$, a minimum $\ast_0$ of $f_0$, and an interpolation pair $(F,G)$ between them, then $\tilde\phi_F$ induces a morphism \[\phi_F: \pi_1^{\text{Morse}}(f_0,\ast_0)  \rightarrow \pi_1^{\text{Morse}}(f_1,\phi_F(\ast_0)).\] \end{theorem}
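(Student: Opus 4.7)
The plan is to read $\tilde\phi_F$ off the compactified unstable manifolds in $F$ of lifts of the low-index critical points of $f_1$. Each minimum $\ast_1'$ of $f_1$ lifts to an index 1 critical point $\hat\ast_1'$ of $F$ whose unstable manifold is an oriented interval between two minima of $F$ (which are lifts of minima of $f_2$), and each $x\in\text{Crit}_1(f_1)$ lifts to $\hat x\in\text{Crit}_2(F)$ whose unstable manifold is a 2-disk. The boundary circle of this 2-disk, read as a Morse $F$-loop, decomposes into $F$-steps of two types: ``continuation steps'' through lifts of $\text{Crit}_0(f_1)$, and steps through lifts of $\text{Crit}_1(f_2)$ which descend to $f_2$-steps. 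The whole construction rests on following these boundaries from the $f_1$-side to the $f_2$-side.

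First I would set up base points: pick $\ast_2$ to be the endpoint of the oriented interval $W^u_F(\hat\ast_1)$ singled out by a chosen orientation convention; the same rule assigns to each minimum $\ast_1'$ of $f_1$ a preferred minimum $(\ast_1')^*$ of $f_2$. Next, for an $f_1$-step $\sigma_x$ through $x\in\text{Crit}_1(f_1)$ with endpoints $\ast_1,\ast_1'$, the circle $\partial W^u_F(\hat x)$ contains a distinguished continuation step through $\hat\ast_1$ and one through $\hat\ast_1'$; cutting along these two steps separates the circle into two arcs, each made of $f_2$-steps (and possibly further continuation steps through other minima of $f_1$, to be absorbed later). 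Define $\tilde\phi_F(\sigma_x)$ to be one of these two arcs, the choice determined by an orientation convention matching that of $\sigma_x$, read as a word in $f_2$-steps after pushing any internal continuation steps to their $f_2$-endpoints. Extending by concatenation defines $\tilde\phi_F$ on $\tilde{\mathcal{L}}(f_1,\ast_1)$, with values in $f_2$-loops based at $\ast_2$.

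Then, for descent to $\mathcal{L}(f_1,\ast_1)$ and the morphism property, I would check two points. First, $\tilde\phi_F(\sigma_x^{-1})$ is the opposite arc on $\partial W^u_F(\hat x)$, so $\tilde\phi_F(\sigma_x\sigma_x^{-1})$ is the full $f_2$-part of $\partial W^u_F(\hat x)$; together with the two continuation steps (appearing with opposite orientations in the full $F$-loop, hence cancelling up to reordering) this full circle is the boundary of the 2-disk $W^u_F(\hat x)$, and pushing the triviality from $F$ down to $f_2$ yields triviality of $\tilde\phi_F(\sigma_x\sigma_x^{-1})$ in $\pi_1^{\text{Morse}}(f_2,\ast_2)$. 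Second, for $z\in\text{Crit}_2(f_1)$ lifted to $\hat z\in\text{Crit}_3(F)$, the 3-disk $W^u_F(\hat z)$ has boundary a 2-sphere tiled by 2-disks $W^u_F(\hat w)$ for $\hat w\in\text{Crit}_2(F)$, of two types: $\hat x$ with $x\in\partial W^u_{f_1}(z)\cap\text{Crit}_1(f_1)$, which encode $\tilde\phi_F$ on the steps of $\partial W^u_{f_1}(z)$, and lifts of $\text{Crit}_2(f_2)$, which give $f_2$-relations. Analyzing this tiling as in Lemma \ref{lemma:trivial_disks}, but on the 2-sphere $\partial W^u_F(\hat z)$, writes $\tilde\phi_F(\partial W^u_{f_1}(z))$ as a product of $f_2$-relations up to conjugation by $f_2$-paths, so it lies in $\mathcal{R}(f_2,\ast_2)$.

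The main obstacle will be the combinatorial and geometric bookkeeping on the 2-sphere $\partial W^u_F(\hat z)$: the 2-disks of both types meet along 1-cells that are either continuation steps or lifted $f_2$-steps, and one must carefully track how continuation steps pair up and cancel (via $f_1$-cancellation-type relations lifted to $F$) so that the nontrivial content is concentrated in $f_2$-relations. A secondary delicate point is to ensure that all the auxiliary choices (orientations of continuation steps, arc selection on circles, base-point assignments) are coherent throughout the construction; the flexibility of conjugation paths allowed by Lemma \ref{lemma:trivial_disks} is what makes this coherence achievable.
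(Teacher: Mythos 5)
Your overall strategy is the same as the paper's: lift an index~$2$ critical point $z$ of $f_1$ to an index~$3$ critical point of $F$, use the three-dimensional unstable object to ``transport'' the relation $\partial W^u_{f_1}(z)$ to the $f_2$ side, and conclude via Lemma~\ref{lemma:trivial_disks} that the transported loop lies in $\mathcal{R}(f_2,\ast_2)$. But there are two substantive gaps.

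First, you work directly with the compactified unstable manifolds $\overline{W^u_F(\hat z)}$ and claim their boundary is a 2-sphere tiled by 2-disks $W^u_F(\hat w)$. This is not true: the paper's Remark following the definition of $\mathcal{R}(f,\ast)$ explains exactly why. The evaluation map from the compactified moduli space to the closure of the unstable manifold is a diffeomorphism in the interior but \emph{fails to be injective on the boundary} --- a single point of $\partial \overline{W^u(p)}$ can have a whole family of preimages. Consequently $\overline{W^u_F(\hat z)}$ need not be a closed disk and its boundary need not be a sphere. The paper circumvents this by working throughout with the moduli space $\mathcal{M}_+((z,0),\varnothing)$, whose compactification genuinely is a 3-disk with sphere boundary, and only at the very end pushing forward by the evaluation map $\mathrm{ev}_{f_2}$ to read off the resulting $f_2$-loop. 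Your tiling picture must be re-expressed at the level of moduli spaces; the ``tiles'' on the sphere are then products such as $\mathcal{M}((z,0),(q,1))\times\mathcal{M}((q,1),\varnothing)$, appearing once for each rigid trajectory of the first factor, together with pieces (e.g.\ the cylinder $\partial\mathcal{M}((z,0),M\times]0,1[)$ of Lemma~\ref{lemma:boundary_1_disk}) that are not unstable manifolds of any index-2 critical point at all.

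Second, the entire content of the paper's proof lies in the part you flag as ``the main obstacle'' and do not carry out. Once Lemma~\ref{lemma:boundary_1_disk} identifies $\mathcal{M}((z,0),M\times\{1\})$ as a 2-disk and Lemma~\ref{lemma:ell_and_phi_ell} identifies its boundary with $\phi_F(\ell)$, one still has to analyse the stratification of this disk by the index of the first break at $u=1$. This requires case analysis of a kind that your tiling sketch does not predict: index-0 breaks contribute pieces collapsing to points; index-2 breaks contribute disks that become unstable manifolds of index-2 critical points of $f_2$ (the relations); index-1 breaks split into three sub-cases depending on whether the ends of $\mathcal{M}((z,0),(q,1))$ break in $M\times\{1\}$ or in $M\times\{0\}$, the latter producing \emph{strips that cut the disk in two} and contribute cancelling pairs $\sigma_q\cdots\sigma_q^{-1}$; and boundaryless circle components of $\mathcal{M}((z,0),(q,1))$ produce annuli and bubbling, which must be argued away. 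Without addressing these, ``analyzing this tiling as in Lemma~\ref{lemma:trivial_disks}'' is a heuristic, not a proof. (A smaller issue: your reading of $\tilde\phi_F(\sigma_x^{-1})$ as the ``opposite arc'' of $\partial\mathcal{M}_+((x,0),\varnothing)$ is off --- that circle has four arcs: $\sigma_x$, the two continuation steps $\alpha_1,\alpha_2$ through the minima, and the $f_2$-path; $\tilde\phi_F(\sigma_x^{-1})=\tilde\phi_F(\sigma_x)^{-1}$ holds simply by the orientation convention, so this check is immediate rather than needing the disk.)
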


\begin{remark} As in this theorem, for the rest of the paper, we will choose the base point to be a minimum of $f_0$. This allows us to naturally have a base point for the the loops in the image of this morphism which depends only on the interpolation pair and the first base point. If one wants a more general choice of base point, one needs to choose base points $\ast_0$ and $\ast_1$ that are generic points on $M$, a minimum $y$ of $f_0$, a path from $\ast_0$ to $y$, and a path from $\ast_1$ to $\phi_F(y)$. \end{remark}
 
We then show that this morphism is functorial in the following sense:
\begin{theorem}
Let $(f_0,g_0), (f_1,g_1), (f_2,g_2)$ be three Morse-Smale pairs over $M$, and take interpolation pairs $(F_{ij},G_{ij})$ between $(f_i,g_i)$ and $(f_j,g_j)$, which induce morphisms $\phi_{ij}:\pi_1^{\rm Morse}(f_i,\ast_i)\rightarrow\pi_1^{\rm Morse}(f_j,\phi_{ij}(\ast_i))$, for $i<j$, between their Morse fundamental groups. Then there exists an isomorphism $\psi$ such that the following diagram is commutative: 

\[\begin{tikzcd}[column sep=4em, row sep=3em]
\pi_1^\text{Morse}(f_0,\ast_0) \arrow[r, "\phi_{02}"] \arrow[d, "\phi_{01}"]
& \pi_1^\text{Morse}(f_2,\phi_{02}(\ast_0)) \arrow[d, "\psi"', "\simeq"] \\
\pi_1^\text{Morse}(f_1,\phi_{01}(\ast_0))  \arrow[r, "\phi_{12}"]
& \pi_1^\text{Morse}(f_2,\phi_{12}(\phi_{01}(\ast_0))).
\end{tikzcd}\]
\end{theorem}

This theorem readily allows us to prove that the Morse fundamental group only depends on the topology of the underlying manifold $M$, and not on the other choices:

\begin{cor} Let $(f_0,g_1)$ and $(f_1,g_1)$ be two Morse-Smale pairs on a manifold $M$, and $(F,G)$ an interpolation pair between them, which induces a morphism $\phi_{F}: \pi_1^\text{Morse}(f_0,\ast_0)\rightarrow \pi_1^\text{Morse}(f_1,\phi_{01}(\ast_0))$, for $\ast_0\in{\rm Crit}_0(f_0)$. Then $\phi_F$ is an isomorphism.
\end{cor}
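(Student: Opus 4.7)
The strategy is to invoke the functoriality theorem in both directions, using a carefully chosen self-interpolation to play the role of the identity, and then conclude by a standard algebraic argument.

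First, I would build an interpolation pair $(F',G')$ going from $(f_2,g_2)$ back to $(f_1,g_1)$, which exists by a generic perturbation argument, and let $\phi_{F'}\colon \pi_1^{\text{Morse}}(f_2,\ast_2)\to \pi_1^{\text{Morse}}(f_1,\tilde\ast_1)$ be the morphism it induces via Theorem~\ref{thm:interpolation_intro}. Next, I would construct a self-interpolation pair $(F_0,G_0)$ from $(f_1,g_1)$ to itself and argue that the induced morphism $\phi_{F_0}$ is an isomorphism; the cleanest choice is likely of product form $F_0(x,t)=f_1(x)+h(t)$ for a suitable Morse-type function $h$ on $\mathbb{R}$, so that the continuation trajectories are essentially horizontal and, by direct inspection on generators, each $f_1$-step is sent to itself (up to a base-point conjugation absorbed into the base-point isomorphism supplied by the functoriality statement).

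Now apply the functoriality theorem to the triple $(f_1,g_1),(f_2,g_2),(f_1,g_1)$ with interpolations $(F,G), (F',G'), (F_0,G_0)$. It supplies an isomorphism $\psi$ such that
\[\phi_{F'}\circ \phi_F \;=\; \psi\circ \phi_{F_0}.\]
The right-hand side is an isomorphism by construction, whence $\phi_{F'}\circ \phi_F$ is one; in particular $\phi_F$ is injective. A completely symmetric argument applied to the triple $(f_2,g_2),(f_1,g_1),(f_2,g_2)$ shows that $\phi_F\circ \phi_{F'}$ is an isomorphism, so $\phi_F$ is surjective. A bijective group homomorphism is an isomorphism, giving the claim.

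The main obstacle will be the auxiliary step that a suitably chosen self-interpolation induces an isomorphism. The naive attempt $F_0(x,t)=f_1(x)$ fails to be Morse on $M\times\mathbb{R}$, so the self-interpolation must be genuinely nontrivial in the $t$-direction, and one has to verify carefully that the moduli spaces of grafted trajectories for $F_0$ really do match each $f_1$-step with itself with compatible orientations. Once this step is settled, the rest of the argument is purely formal and follows at once from the functoriality theorem together with the two-sided inverse argument above.
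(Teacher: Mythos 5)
Your proposal is correct and follows essentially the same route as the paper: Corollary~\ref{cor:iso_morse} is proved there by applying the functoriality theorem with $f_3=f_1$, taking $\phi_{23}$ to be a reverse continuation map and $\phi_{13}=\mathrm{id}$ (justified in the setup of Section~\ref{subsection:functoriality}, where it is noted that trajectories in $M\times[0,1]\times\{0\}$, i.e.\ the constant self-interpolation $f_1+Ch$, induce the identity because of the split dynamics), and then concluding exactly by your two-sided inverse argument. Two small slips worth flagging: this corollary concerns ordinary (non-grafted) continuation trajectories, so the reference to ``grafted trajectories'' in your auxiliary step is a typo; and the product-form trajectories are not ``essentially horizontal'' --- they move in the $M$-direction along the $f_1$-flow and in the $\mathbb{R}$-direction along $h$ simultaneously, the point being rather that the $M$-projection is an $f_1$-trajectory, which is what gives $\sigma_x\mapsto\sigma_x$ on generators.
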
 

Furthermore, using grafted trajectories as in \cite{C} (see also \cite{KM}, Section I.2.8.), one can build a morphism between fundamental groups of two different manifolds $X$ and $Y$. More precisely, if we take a Morse-Smale pair $(f_0,g_0)$ on $X$ and an interpolation pair $(f_0,g_0)$ from $(f_0,g_0)$ to itself, and do the same for a Morse-Smale pair $(f_1,g_1)$ on Y with an interpolation pair $(f_1,g_1)$, then using a smooth map $H:X\rightarrow Y$, one may "graft" anti-gradient trajectories of $f_1$ to those of $f_0$, which allows us to build a similar morphism between Morse fundamental groups. If furthermore, $H$ satisfies the generic condition that pairs $(x,y)\in\text{Crit}(f_0)\times\text{Crit}(f_1)$ don't intersect its graph, we call $(f_0,f_1,H)$ a set of grafted interpolation data. 
\begin{theorem}
Given a Morse-Smale pair $(f_0,g_0)$ on a manifold $X$ and a Morse-Smale pair $(f_1,g_1)$ on a manifold $Y$, a set of grafted interpolation data $(f_0,f_1, H)$, and suitably generic metrics on $X$ and $Y$, then for any minimum $\ast_0$ of $f_0$, there exists a well defined morphism $$\phi^{\rm gr}_H: \pi_1^{\text{Morse}}(f_0,\ast_0)\rightarrow \pi_1^{\text{Morse}}(f_1,\phi_H^{\rm gr}(\ast_0)).$$ 
\end{theorem}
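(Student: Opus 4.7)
The plan is to mimic the proof of Theorem~\ref{thm:interpolation_intro}, replacing the moduli spaces of continuation trajectories in $M\times\mathbb{R}$ by moduli spaces of grafted trajectories bridging $X\times\mathbb{R}$ and $Y\times\mathbb{R}$ via the map $H$, as introduced in \cite{C}. The morphism $\tilde\phi^{\rm gr}_H$ will first be defined at the level of Morse steps, then shown to extend to a well-defined map on loops, and finally to send relations to relations, so that it descends to a group morphism $\phi^{\rm gr}_H$ on the Morse fundamental groups.

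To an $f_1$-step $\sigma$ through $x\in{\rm Crit}_1(f_1)$ I would associate an image built as follows. The interpolation $F_1$ lifts $x$ to an index $2$ critical point $\hat x$ of $F_1$, whose unstable manifold $W^u(\hat x)\subset X\times\mathbb{R}$ is $2$-dimensional. Grafting each outgoing trajectory via $H$ to an $F_2$-trajectory in $Y\times\mathbb{R}$ yields a $2$-dimensional moduli space of grafted trajectories issued from $\hat x$, whose ``$Y$-side'' boundary decomposes, by compactness and transversality for the generic metrics, into concatenations of index $2$ unstable manifolds of $F_2$, i.e.\ of $f_2$-steps joined at minima of $f_2$. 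Reading the boundary off with a choice of orientation gives a word of $f_2$-steps, which defines $\tilde\phi^{\rm gr}_H(\sigma)$. The base point $\ast_2$ is chosen as the minimum of $f_2$ at which the grafted image of $W^u(\hat\ast_1)$ terminates: $W^u(\hat\ast_1)$ is $1$-dimensional, its graft is a $1$-dimensional broken grafted trajectory in $Y\times\mathbb{R}$, and the index $0$ critical point of $f_2$ appearing at its end provides a canonical candidate for $\ast_2$.

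To promote $\tilde\phi^{\rm gr}_H$ to a group morphism I would check, as in the proof of Theorem~\ref{thm:interpolation_intro}, that the images of two consecutive $f_1$-steps concatenate into a well-defined $\ast_2$-based $f_2$-loop: the common endpoint of two consecutive $\hat x_i,\hat x_{i+1}$ gives rise to a $1$-dimensional grafted moduli space whose image is a path in $Y$ along which one conjugates in order to rebase everything at $\ast_2$. The relations are handled by lifting an index $2$ critical point $z\in{\rm Crit}_2(f_1)$ to an index $3$ critical point $\hat z$ of $F_1$: its $3$-dimensional unstable manifold grafts to a $3$-dimensional piece in $Y\times\mathbb{R}$ whose ``$Y$-side'' boundary is a disk, or a finite union of disks glued along common edges, built out of index $2$ unstable manifolds of $f_2$. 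Lemma~\ref{lemma:trivial_disks} then guarantees that the image of $\partial W^u(z)$ is trivial in $\pi_1^{\rm Morse}(f_2,\ast_2)$, so $\phi^{\rm gr}_H$ is well defined on $\pi_1^{\rm Morse}(f_1,\ast_1)$.

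The main obstacle is the compactness-and-transversality analysis for grafted moduli spaces, which is more delicate than in Theorem~\ref{thm:interpolation_intro} because there are now three distinct sources of breaking: in $X\times\mathbb{R}$, in $Y\times\mathbb{R}$, and at the grafting point (where the graft parameter may degenerate). Importing the analytic framework of \cite{C} (see also \cite{KM}, Section I.2.8) and working with suitably generic metrics on $X\times\mathbb{R}$ and $Y\times\mathbb{R}$, I would verify that the $1$-, $2$-, and $3$-dimensional grafted moduli spaces used above admit the expected manifold-with-corners compactifications, with boundary strata of precisely the combinatorial type demanded by the arguments above. Once that is in place, the combinatorics of the proof of Theorem~\ref{thm:interpolation_intro} transfers essentially verbatim.
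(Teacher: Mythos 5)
Your proposal matches the paper's argument: both define $\tilde\phi^{\rm gr}_H$ on steps via the $2$-dimensional grafted moduli space issued from the index-$2$ lift $(x,0)\in{\rm Crit}(F_1)$, fix $\ast_2$ as the endpoint of the unique grafted trajectory from $(\ast_1,0)$, handle relations by lifting $z\in{\rm Crit}_2(f_1)$ to the index-$3$ critical point $(z,0)$ and showing the $Y$-side boundary $\mathcal{M}_{\rm gr}((z,0),\{1\})$ of $\mathcal{M}_{\rm gr}((z,0),\varnothing)$ is a disk whose boundary reads off $\phi^{\rm gr}_H(\ell)$, and then conclude by the combinatorial analysis of Theorem~\ref{thm:quotient} plus Lemma~\ref{lemma:trivial_disks}. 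The paper makes the compactness bookkeeping you flag more precise by noting the grafted moduli space is a filled cylinder glued from $W^u((z,0))\times[0,1/2[$ and $(W^u(z)\cap(X\times\{1/2\}))\times\mathbb{R}_+$ (so there is no genuine degeneration at the graft level $s=1/2$, just a gluing), and by replacing ${\rm ev}$, ${\rm ev}_{f_1}$, ${\rm ev}_{f_2}$ with their grafted counterparts, after which the non-grafted proof applies verbatim.
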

We then prove that this morphism does not depend on the homotopy class of $H$:
\begin{theorem} 
Let $H_0$ and $H_1$ be two homotopic differentiable maps between manifolds $M_0$ and $M_1$. Let $$\phi_{H_0}^{\rm gr}:\pi_1^{\text{Morse}}(f_0,\ast_0)\rightarrow\pi^{\text{Morse}}_1(f_1,\phi_H^{\rm gr}(\ast_0))$$ and $$\phi_{H_1}^{\rm gr}:\pi_1^{\text{Morse}}(f_0,\ast_0)\rightarrow\pi_1^{\text{Morse}}(f_1,\phi_H^{\rm gr}(\ast_0))$$ be the associated grafted continuation maps between their Morse fundamental groups. Then there exists an isomorphism $\psi$ such that the following diagram is commutative: 
\[\begin{tikzcd}[column sep=5em, row sep=3em]
\pi^{\text{Morse}}_1(f_0,\ast_0) \arrow[r,"\phi_{H_0}^{\rm gr}"] \arrow[rd, "\phi_{H_1}^{\rm gr}"']
& \pi^{\text{Morse}}_1(f_1,\phi_{H_1}^{\rm gr}(\ast_0)) \arrow[d, "\psi"', "\simeq"] \\
&  \pi^{\text{Morse}}_1(f_1,\phi_{H_2}^{\rm gr}(\ast_0)).
\end{tikzcd}\]
\end{theorem}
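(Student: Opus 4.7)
The plan is to mimic the classical Floer-theoretic argument that homotopic continuation data induce chain-homotopic maps, adapted to the fundamental group setting: the isotopy between $H_0$ and $H_1$ gives a parametric family of grafted moduli spaces whose compactification produces enough disks in $M_2$ that Lemma \ref{lemma:trivial_disks} forces $\phi_{H_0}^{\rm gr}$ and $\phi_{H_1}^{\rm gr}$ to agree up to conjugation by a distinguished Morse path, which supplies the isomorphism $\psi$.

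First, I would fix a smooth isotopy $H: M_1\times[0,1]\to M_2$ between $H_0$ and $H_1$, and, for every index-$1$ critical point $x$ of $f_1$, form the parametric grafted moduli space
\[
\mathcal{M}^{\rm gr}_{[0,1]}(x) \;=\; \bigcup_{t\in[0,1]}\{t\}\times\mathcal{M}^{\rm gr}_{H_t}(x).
\]
For generic choices of the isotopy and of the auxiliary metrics on $M_1\times\mathbb{R}$ and $M_2\times\mathbb{R}$, a parametric transversality argument makes this a smooth $1$-manifold with boundary, equipped with an evaluation map into $M_2$ whose values at $t=0$ and $t=1$ recover the image of the Morse step through $x$ under $\phi_{H_0}^{\rm gr}$ and $\phi_{H_1}^{\rm gr}$ respectively.

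Second, I would describe its compactification. The boundary of $\overline{\mathcal{M}^{\rm gr}_{[0,1]}(x)}$ decomposes into the endpoint slices $\{0\}\times\overline{\mathcal{M}^{\rm gr}_{H_0}(x)}$ and $\{1\}\times\overline{\mathcal{M}^{\rm gr}_{H_1}(x)}$, together with interior breakings at isolated parameter values $t\in(0,1)$, of two flavours: breaking of an $F_1$-trajectory before the graft, and breaking of an $F_2$-trajectory after the graft. In both cases, evaluation into $M_2$ produces a concatenation of Morse steps of $f_2$ with the evaluation of a grafted trajectory at that single $t$. Altogether, this turns $\mathcal{M}^{\rm gr}_{[0,1]}(x)$ into a $2$-disk in $M_2$ whose oriented boundary starts with $\phi_{H_0}^{\rm gr}(\sigma_x)$, ends with $\phi_{H_1}^{\rm gr}(\sigma_x)^{-1}$, and in between carries intermediate Morse steps of $f_2$ accounting for the two types of breakings.

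Third, I would construct $\psi$ and conclude. Applying the same construction with $x$ replaced by the minimum $\ast_1$ of $f_1$ yields a distinguished $1$-manifold whose evaluation traces out a Morse path $\gamma$ in $M_2$ from $\ast_2$ to $\ast_2'$; I would let $\psi$ be the change-of-basepoint isomorphism associated to $\gamma$. For any $\ast_1$-based Morse loop $\ell=\sigma_1\cdots\sigma_k$, concatenating the disks obtained above along the steps of $\ell$, with $\gamma$ attached at the ends, gives an $\ast_2$-based filling in $M_2$ whose boundary is exactly $\phi_{H_0}^{\rm gr}(\ell)\cdot\psi\!\left(\phi_{H_1}^{\rm gr}(\ell)\right)^{-1}$. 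Lemma \ref{lemma:trivial_disks} then certifies that this loop is trivial in $\pi_1^{\text{Morse}}(f_2,\ast_2)$, which proves the triangle commutes.

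The main obstacle will be the parametric transversality and compactness analysis: one must verify that the interior breakings of $\mathcal{M}^{\rm gr}_{[0,1]}(x)$ occur only at isolated parameter values, that the resulting evaluations give genuine Morse steps of $f_2$ rather than paths of lower regularity, and that the pieces glue coherently across consecutive steps of $\ell$ into a single disk eligible for Lemma \ref{lemma:trivial_disks}. Once this bookkeeping is carried out, the argument is formally parallel to the proof of Theorem \ref{thm:interpolation_intro} and of functoriality for the ordinary continuation map.
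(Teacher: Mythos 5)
Your proposal takes a genuinely different route from the paper. The paper derives this theorem as a direct corollary of Proposition~\ref{prop:diag_lambda_gr}: it sets up the square interpolation $M\times[0,1]\times[0,1]$ with the isotopy $H^\tau_{12}$ encoded in the $s$-coordinate of the graft and $H^\tau_{23}\equiv{\rm id}$, parametrizes grafted trajectories by the angle $\lambda\in[0,\pi/2]$, and performs a wall-crossing analysis as $\lambda$ sweeps across non-generic values. The endpoints $\lambda=0$ and $\lambda=\pi/2$ recover $\phi_{H_1}^{\rm gr}$ and $\phi_{H_0}^{\rm gr}$, while each wall-crossing changes the morphism by cancelation, by a relation, or by conjugation by one Morse step, and the accumulated conjugation supplies $\psi$. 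You instead propose to form the one-parameter family of grafted moduli spaces directly over the isotopy parameter $t\in[0,1]$, produce from it a filling disk in $M_2$ with boundary $\phi_{H_0}^{\rm gr}(\ell)\cdot\psi(\phi_{H_1}^{\rm gr}(\ell))^{-1}$, and invoke Lemma~\ref{lemma:trivial_disks}. This is the ``chain-homotopy'' pattern one would first reach for, and it is conceptually parallel to the proof of Theorem~\ref{thm:quotient}; it buys you a simpler geometric setup (no square interpolation) at the cost of re-deriving the technical core rather than reusing it.

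Two points deserve attention. First, a dimension mismatch: $\mathcal{M}^{\rm gr}_{[0,1]}(x)=\bigcup_t\{t\}\times\mathcal{M}^{\rm gr}_{H_t}(x)$ as you define it (with $\mathcal{M}^{\rm gr}_{H_t}(x)$ rigid grafted trajectories to index-$1$ critical points of $f_2$) is a $1$-manifold, so it cannot itself ``turn into a $2$-disk in $M_2$.'' You need the full family $\bigcup_t\{t\}\times\mathcal{M}_{{\rm gr},+,H_t}((x,0),\varnothing)$, including the $1$-parameter grafted trajectories ending at minima of $f_2$, to get the two-dimensional moduli space whose restriction to $M_2\times\{1\}$ is the disk you want. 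Second, and more substantively, the appeal to Lemma~\ref{lemma:trivial_disks} is a larger step than ``bookkeeping'': that lemma requires the disk to be a union of unstable manifolds of index-$2$ critical points of $f_2$ attached along steps, whereas the evaluated parametric moduli space a priori has no such structure. Extracting that structure from it requires essentially the full case analysis in the proof of Theorem~\ref{thm:quotient} (breaks at index-$2$, index-$1$, and index-$0$ critical points of $f_2$; strips; annuli), now carried out parametrically in $t$. You flag this correctly as the main obstacle, but this is where the real content of the proof lives. The paper sidesteps this by reusing Proposition~\ref{prop:diag_lambda_gr}, which already packages the wall-crossing analysis and the base-point conjugation in a form that is also shared with the functoriality theorem.
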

It is also functorial in the same sense as before:
\begin{theorem}\label{thm:diag}
Let $(f_0,g_0), (f_1,g_1)$, and $(f_2,g_2)$ be three Morse-Smale pairs over manifolds $M_0$, $M_1$ and $M_2$. Let $(f_0,f_1, H_{01})$ (resp. $(f_1,f_2, H_{12})$, $(f_{0},f_2, H_{02})$, where $H_{02}=H_{12}\circ H_{01}$) be a set of grafted interpolation data between $f_0$ and $f_1$ (resp. between $f_1$ and $f_2$, between $f_0$ and $f_2$), and $\ast_0\in{\rm Crit}_0(f_0)$, which induce morphisms $\phi^{\rm gr}_{12}$, $\phi^{\rm gr}_{23}$ and $\phi^{\rm gr}_{13}$ between the corresponding Morse fundamental groups, for suitably generic metrics. Then there exists an isomorphism $\psi$ such that the following diagram is commutative: 

\[\begin{tikzcd}[column sep=4em, row sep=3em]
\pi_1^\text{Morse}(f_0,\ast_0) \arrow[r, "\phi^{\rm gr}_{02}"] \arrow[d, "\phi^{\rm gr}_{01}"]
& \pi_1^\text{Morse}(f_2,\phi_{H_{02}}^{\rm gr}(\ast_0)) \arrow[d, "\psi"', "\simeq"] \\
\pi_1^\text{Morse}(f_1,\phi_{H_{01}}^{\rm gr}(\ast_0))  \arrow[r, "\phi^{\rm gr}_{12}"]
& \pi_1^\text{Morse}(f_2,\phi_{H_{12}}^{\rm gr}(\phi_{H_{01}}^{\rm gr}(\ast_0))).
\end{tikzcd}\]
\end{theorem}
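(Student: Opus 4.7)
The plan is to mirror the proof of the functoriality theorem for the non-grafted continuation map, adapting it to the grafted setting via the key hypothesis $H_{13}=H_{23}\circ H_{12}$. The strategy is to build a one-parameter family of grafted interpolation data connecting the direct grafting $(F_1,F_3,H_{13})$ to the ``stacked'' grafting obtained from $(F_1,F_2,H_{12})$ composed with $(F_2,F_3,H_{23})$, and then to extract a relation in $\pi_1^{\rm Morse}(f_3,\ast_3)$ from the evaluation of the resulting moduli spaces.

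First I would introduce a gluing parameter $R\in[0,\infty]$ and define a smooth homotopy of grafted interpolation data such that at $R=\infty$ the associated grafted trajectories degenerate into concatenations of an $F_1$-to-$F_2$ grafted trajectory with an $F_2$-to-$F_3$ grafted trajectory; the hypothesis $H_{13}=H_{23}\circ H_{12}$ is exactly what guarantees that these broken grafted trajectories correspond in the limit to genuine $(F_1,F_3,H_{13})$-grafted trajectories. At the other extreme the data coincides with the original $(F_1,F_3,H_{13})$. Fixing a Morse step $\sigma$ through an index $1$ critical point $x$ of $f_1$ and varying $R$, I would obtain a $2$-parameter family of grafted trajectories which, for generic auxiliary metrics on $M_1\times\mathbb{R}$ and $M_2\times\mathbb{R}$, forms a smooth $2$-dimensional moduli space.

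Evaluating this family at its endpoint in $M_3$ produces a $2$-dimensional subset of $M_3$ whose boundary decomposes into the $\phi^{\rm gr}_{13}$-image of $\sigma$ (at one value of $R$), the $(\phi^{\rm gr}_{23}\circ\phi^{\rm gr}_{12})$-image of $\sigma$ (at the other value), and pieces arising from breakings at index $2$ critical points of $F_1$, $F_2$ or $F_3$, which via the grafted evaluation map correspond to boundaries of unstable manifolds of index $2$ critical points of $f_3$. Lemma \ref{lemma:trivial_disks} then shows that the two images are equal modulo elements of $\mathcal{R}(f_3,\ast_3)$. The discrepancy between the two base points $\ast_3$ and $\ast_3'$ is handled, just as in the non-grafted case, by conjugation along a Morse path between them, yielding the desired isomorphism $\psi$ and the commutativity of the diagram.

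The hard part will be the compactness and transversality analysis of the $2$-dimensional moduli space throughout the homotopy: one must carefully identify all types of boundary strata at the gluing limit $R\to\infty$ and show that the only new contributions beyond the two compositions are those coming from boundaries of index $2$ unstable manifolds. This is precisely where $H_{13}=H_{23}\circ H_{12}$ enters as an exact matching condition, since it allows the limit broken grafted trajectories across the three manifolds to be identified rigorously with honest broken trajectories of the stacked grafting.
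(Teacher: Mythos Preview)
Your overall cobordism strategy is correct and close in spirit to the paper's, but the concrete geometry you propose differs from what the paper does, and one point in your outline is muddled.

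The paper does not use a neck-stretching parameter $R$. Instead it builds a \emph{square} $M_i\times[0,1]^2$ and studies \emph{twice-grafted} trajectories: each trajectory is grafted once from $M_1$ to $M_2$ across the hypersurface $\{s'=1/2\}$ via $H_{12}$, and once more from $M_2$ to $M_3$ across $\{s+s'=3/2\}$ via $H_{23}$. These trajectories are parametrized by an angle $\lambda\in[0,\pi/2]$ (the direction in the square in which they leave $(x,0,0)$). At $\lambda=0$ the trajectory runs along the bottom edge (the trivial $f_1$-interpolation) before crossing both grafting walls in rapid succession, which realizes $\phi^{\rm gr}_{H_{23}\circ H_{12}}\circ\mathrm{id}=\phi^{\rm gr}_{13}$; at $\lambda=\pi/2$ it breaks at the $f_2$-corner, realizing $\phi^{\rm gr}_{23}\circ\phi^{\rm gr}_{12}$. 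The wall-crossing analysis then classifies the non-generic $\lambda$ into three types (birth/death tangencies, breaks at index-$2$ points of $f_3$, and breaks at index-$0$ points of $f_1$), and it is the \emph{last} type that produces the conjugating path and hence the isomorphism $\psi$. Your sketch does not isolate this mechanism; you attribute $\psi$ only to a base-point discrepancy, but in the paper $\psi$ arises concretely from index-$0$ breaks of $f_1$ as $\lambda$ varies.

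Your statement that ``$H_{13}=H_{23}\circ H_{12}$ is exactly what guarantees that the broken grafted trajectories correspond in the limit to genuine $(F_1,F_3,H_{13})$-grafted trajectories'' is backwards. The broken limit through $M_2$ is where $\phi^{\rm gr}_{23}\circ\phi^{\rm gr}_{12}$ lives and needs no hypothesis on $H_{13}$; the identity $H_{13}=H_{23}\circ H_{12}$ is used at the \emph{other} end of the family, to identify the single-graft (or, in the paper's language, the $\lambda=0$ twice-graft collapsing both grafts) with the given $\phi^{\rm gr}_{13}$. Once this is corrected your neck-stretching picture can be made to work, but the paper's square-and-angle model makes the three degeneration types and the origin of $\psi$ much more transparent.
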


We deduce the following consequence:
\begin{cor} Let $(f_0,g_1)$ be a Morse-Smale pair on a manifold $M_0$ and $(f_1,g_1)$ be Morse-Smale pair on a manifold $M_1$, and suppose there exists a diffeomorphism $H:M_0\rightarrow M_1$. Then for a suitably generic metric, the induced morphism $$\phi^{\rm gr}_{H}: \pi_1^\text{Morse}(f_0,\ast_0)\rightarrow \pi_1^\text{Morse}(f_1,\phi_{H}^{\rm gr}(\ast_0)),$$ for $\ast_0\in{\rm Crit}_0(f_0)$, is an isomorphism.
\end{cor}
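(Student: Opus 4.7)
The plan is to deduce this corollary from the functoriality statement, Theorem \ref{thm:diag}, by composing $H$ and $H^{-1}$. I would first choose grafted interpolation data realizing $\phi^{\rm gr}_H: \pi_1^{\rm Morse}(f_1,\ast_1)\to\pi_1^{\rm Morse}(f_2,\ast_2)$ and $\phi^{\rm gr}_{H^{-1}}: \pi_1^{\rm Morse}(f_2,\ast_2)\to \pi_1^{\rm Morse}(f_1,\ast_1')$, together with grafted interpolation data for the identities $\mathrm{id}_{M_1}$ and $\mathrm{id}_{M_2}$. For the latter, the resulting grafted continuation map is an ordinary self-continuation map of the type considered in Theorem \ref{thm:interpolation_intro}, and so is an isomorphism by the earlier corollary on invariance of $\pi_1^{\rm Morse}$ under choice of interpolation data.

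Next I would apply Theorem \ref{thm:diag} with $(M_1,M_2,M_3)=(M_1,M_2,M_1)$, $H_{12}=H$, $H_{23}=H^{-1}$, and therefore $H_{13}=H_{23}\circ H_{12}=\mathrm{id}_{M_1}$. The commutativity of the resulting diagram yields
\[
\phi^{\rm gr}_{H^{-1}}\circ \phi^{\rm gr}_H \;=\; \psi\circ \phi^{\rm gr}_{\mathrm{id}_{M_1}},
\]
where $\psi$ is an isomorphism (a basepoint-change) and $\phi^{\rm gr}_{\mathrm{id}_{M_1}}$ is the ordinary self-continuation map on $M_1$, hence an isomorphism. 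Therefore $\phi^{\rm gr}_{H^{-1}}\circ\phi^{\rm gr}_H$ is an isomorphism, and in particular $\phi^{\rm gr}_H$ is injective and $\phi^{\rm gr}_{H^{-1}}$ is surjective.

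Swapping the roles, I would then apply Theorem \ref{thm:diag} with $(M_1,M_2,M_3)=(M_2,M_1,M_2)$, $H_{12}=H^{-1}$, $H_{23}=H$, so that $H_{13}=\mathrm{id}_{M_2}$. By the same argument, $\phi^{\rm gr}_H\circ \phi^{\rm gr}_{H^{-1}}$ is an isomorphism, and so $\phi^{\rm gr}_H$ is also surjective. Combining the two steps, $\phi^{\rm gr}_H$ is both injective and surjective, hence an isomorphism.

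The only real obstacle is a bookkeeping one, already folded into the hypothesis ``for a suitably generic metric'': one must simultaneously arrange that the grafted interpolation data for $H$, for $H^{-1}$, for $\mathrm{id}_{M_1}$ and for $\mathrm{id}_{M_2}$ all satisfy the transversality assumptions needed to invoke Theorem \ref{thm:diag} twice. Since each required transversality condition is a generic property of a finite collection of interpolation pairs and metrics on $M_1\times\mathbb{R}$ and $M_2\times\mathbb{R}$, a common choice exists, and the argument above then goes through unchanged.
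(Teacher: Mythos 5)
Your proposal is correct and is essentially the paper's argument: apply the grafted functoriality theorem (Theorem \ref{thm:diag_gr}) with $M_3=M_1$, $H_{12}=H$, $H_{23}=H^{-1}$, $H_{13}=\mathrm{id}_{M_1}$, and observe that with the constant interpolation data $(F_1,F_1,\mathrm{id})$ the grafted continuation map $\phi^{\rm gr}_{\mathrm{id}}$ reduces to an ordinary self-continuation map (indeed, the trajectories split as $f_1$-trajectories times the $h$-flow, so the map is the identity), which is an isomorphism by Corollary \ref{cor:iso_morse}. You spell out the two-sided injectivity/surjectivity argument that the paper leaves implicit, which is fine. One small bookkeeping imprecision: in your second application, the map playing the role of $\phi^{\rm gr}_{23}$ is $\phi^{\rm gr}_H$ based at $\ast_1'$ (the target of $\phi^{\rm gr}_{H^{-1}}$), not at $\ast_1$, so the surjectivity you obtain is for a base-point-shifted version of $\phi^{\rm gr}_H$ rather than literally the same map. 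This is harmless: from the two applications conclude first that $\phi^{\rm gr}_{H^{-1}}$ is both injective and surjective, hence an isomorphism, and then from $\phi^{\rm gr}_{H^{-1}}\circ\phi^{\rm gr}_H$ being an isomorphism deduce that $\phi^{\rm gr}_H$ is one too.
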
 

\begin{remark}These results then prove that the Morse fundamental group can be seen as a functor from the category of based smooth manifolds to the category of groups, where the morphisms are given differentiable maps which send the first base point onto the second. In order to define the functor, one then simply needs to choose Morse functions which have minimums at those base points, which is always possible. \end{remark}

Finally, we develop a fundamental group-like invariant of a type of Morse-Smale pair $(F,g)$ on $M\times\mathbb{R}$ which we call \textit{interpolation-type} pairs. These Morse-Smale pairs generalize the interpolation pairs in the following ways: they are Morse-Smale pairs on $M\times\mathbb{R}$ which when restricted to certain slices $M\times\{p\}$, match Morse-Smale pairs on $M$, and such that the critical points of the function belong only to these slices, with alternating index shifts. We also allow the function to have different behaviors at infinity. Since these functions are defined on non-compact manifolds, our construction is one of a \textit{relative Morse fundamental group} on $M\times\mathbb{R}$. In particular, we show that it is isomorphic not to the regular fundamental group of $M$, but to a relative fundamental group:
\begin{theorem}Let $(F,G)$ be an interpolation-type pair on $M\times\mathbb{R}$, where $M$ is a closed, smooth manifold. Then for a suitable choice of base point $\ast$, there exists a constant $c\in\mathbb{R}$ and a point $\ast'\in M\times\mathbb{R}$ which depends on $\ast$, such that, if $\pi_1^\text{Morse}(F,\ast)$ is the Morse fundamental group of $F$ with base point $\ast$, and $\pi_1(\mathbb{R}\times M, F^{-1}(]-\infty,c])\cup\{\ast'\},\ast')$ is the fundamental group of $\mathbb{R}\times M$ relatively to $F^{-1}(]-\infty,c])\cup\{\ast\}$ and with base point $\ast'$, then \[\pi_1^\text{Morse}(F,\ast)\cong \pi_1(\mathbb{R}\times M, F^{-1}(]-\infty,c])\cup\{\ast'\},\ast').\]   \end{theorem}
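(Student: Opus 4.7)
The plan is to follow the folklore proof of the compact case, but with the sublevel set $A := F^{-1}(]-\infty,c])$ playing the role of a collapsed basepoint. First I would pick $c$ as a regular value of $F$ large enough that $A$ is invariant under the anti-gradient flow and contains all index-$0$ critical points of $F$ (such a $c$ exists by the defining properties of interpolation-type functions), and I would take $\ast'$ to be the limit of the anti-gradient trajectory through $\ast$, which lies in $A$ by construction. The anti-gradient flow then yields a CW-like decomposition of $M\times\mathbb{R}$ relative to $A$: for each critical point $x$ of $F$ of positive index, the unstable manifold $W^u(x)$ is a cell of dimension ${\rm ind}(x)$ attached to the union of $A$ with the lower-index cells via its broken-trajectory compactification.

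Collapsing $A$ to the single point $\ast'$ turns this into an honest CW complex with one $0$-cell, with $1$-cells indexed by ${\rm Crit}_1(F)$ (each now a loop at $\ast'$), and with $2$-cells indexed by ${\rm Crit}_2(F)$ attached via their boundaries. Under the standard identification $\pi_1(M\times\mathbb{R}, A\cup\{\ast'\},\ast') \cong \pi_1((M\times\mathbb{R})/A, \ast')$ for good pairs, the desired isomorphism is the one sending a Morse step through $x\in{\rm Crit}_1(F)$ to the corresponding $1$-cell in the quotient. The involution relations $\sigma_i\sigma_i^{-1}\sim 1$ are tautologically sent to trivial loops, and each index-$2$ relation $\partial W^u(z)$ bounds the corresponding $2$-cell, so the map is well-defined. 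Surjectivity is immediate since the $1$-cells generate $\pi_1$ of the $2$-skeleton, and injectivity will follow from a cellular-approximation argument together with an analogue of Lemma~\ref{lemma:trivial_disks} to identify null-homotopies with compositions of the $\partial W^u(z)$.

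The main obstacle is verifying that the CW-like decomposition really computes the homotopy type of $(M\times\mathbb{R})/A$, despite the non-compactness of $M\times\mathbb{R}$ and the possibility that some unstable manifolds do not have compact compactifications. Concretely, one needs to show that any path in $M\times\mathbb{R}$ with both endpoints in $A$ is homotopic rel $A$ to a concatenation of unstable manifolds of index-$1$ critical points, and that any such concatenation which is null-homotopic rel $A$ is null-homotopic through compositions of index-$2$ relations. Both statements are standard cellular approximations in the compact case; here they require the interpolation-type hypothesis to ensure that the anti-gradient flow pushes paths and homotopies into the union of unstable manifolds of critical points together with $A$, rather than letting them escape toward infinity. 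Once these facts are established, the rest of the proof parallels the folklore case.
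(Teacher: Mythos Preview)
Your overall strategy---use the unstable manifolds to build a CW-type skeleton and push paths/homotopies onto it via the anti-gradient flow---is exactly the right instinct, and your last paragraph correctly identifies the core analytic task. The paper's proof does precisely this: it builds the map by evaluating along steps, proves surjectivity by flowing an arbitrary relative path down (using a cut-off vector field $X=\nabla\tilde F+\rho\nabla h$ to handle the non-compact ends) until it lands on index-$\leq 1$ unstable manifolds, and proves injectivity by flowing a bounding disk onto index-$\leq 2$ unstable manifolds.

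However, your reduction step contains a genuine error. The ``standard identification'' $\pi_1(X,A,\ast')\cong \pi_1(X/A,\ast')$ does \emph{not} hold for $\pi_1$, even for good pairs. Two concrete obstructions are relevant here. First, $\pi_1(X,A,\ast')$ is only a pointed set, not a group (the paper's Remark after the definition of the relative fundamental group stresses this), while $\pi_1(X/A)$ is always a group; when $A$ is disconnected the mismatch is immediate. For example, with $X=[0,1]$ and $A=\{0,1\}$ one has $|\pi_1(X,A)|=2$ but $\pi_1(X/A)=\pi_1(S^1)=\mathbb Z$. This situation actually occurs in the paper: for $h(s)=-s^2$ the sublevel set has two components and $\pi_1^{\rm Morse}(F,\ast)$ is the two-element pointed set $\{1,[\sigma]\}$, which is not a group. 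Second, even when $A$ is connected one only has $\pi_1(X,A)\cong \pi_1(X)/\pi_1(A)$ as a coset space, whereas $\pi_1(X/A)\cong\pi_1(X)/\langle\!\langle\pi_1(A)\rangle\!\rangle$; take $X=S^1\vee S^1$ with $A$ one of the circles.

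Relatedly, your description of $\pi_1^{\rm Morse}(F,\ast)$ as generated by loops subject to $\sigma\sigma^{-1}\sim 1$ and $\partial W^u(z)\sim 1$ misses part of the definition: elements are Morse \emph{paths} (ending either at $\ast$ or ``at infinity''), there are extra steps and relations coming from $f_0,f_{k+1}$ at the non-compact ends, and there is a third relation (rule~(3) in the paper) saying that the specific landing point at infinity is irrelevant. Your collapsed CW model cannot see any of this. The fix is to drop the collapsing entirely and argue directly with the relative $\pi_1$, as the paper does; once you do that, your final paragraph essentially \emph{is} the proof.
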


We may then use this invariant to deduce properties about the continuation maps between fundamental groups themselves. In particular, by looking at the Morse fundamental group associated to the standard interpolation-pair $(F,G)$ used in Theorem \ref{thm:interpolation_intro}, one can use this theorem to get an alternative proof to the surjectivity of the morphism $\phi_F$. 

\subsection{Plan of the paper} In Section \ref{section:continuation_map}, we define the continuation map for the Morse fundamental group, first in the standard case in subsection \ref{subsection:usual_continuation_map}, then in the grafted case in subsection \ref{subsection:grafted_cont_map}. We then prove that these morphisms are functorial in Section \ref{section:functoriality}, again first in the standard case in subsection \ref{subsection:functoriality}, then in the grafted case in subsection \ref{subsection:grafted_functoriality}. Finally, we define the Morse fundamental group of interpolation-type functions on $M\times\mathbb{R}$ in Section \ref{section:relative}. 
\subsection{Acknowledgements} The author would like to thank her PhD supervisors Frédéric Bourgeois and Agnès Gadbled for their constant support and enthusiasm, and for carefully following the advancement of this project. She is also grateful to Jean-François Barraud for stimulating discussions about his work on the Morse, Floer and stable Morse fundamental groups, for his general encouragement, and for sending an early draft of his paper with Bertuol. She would also like to thank Julio Sampietro-Christ for discussions on grafted trajectories and for pointing out some references. 

During this project, the author benefited from funding and events organised thanks to the ANR project CoSy (ANR-21-CE40-0002), in particular the ANR meeting and the Spring school on Floer homotopy theory, as well as an invitation by Jean Gutt to the Institut de Mathématiques de Toulouse.

Finally, she is grateful to the anonymous referee for many insightful comments and suggestions, and for the care they gave when refereeing this paper. 
 
\section{A continuation map for the Morse fundamental group} \label{section:continuation_map}
\subsection{Two pairs of Morse data on a manifold $M$}\label{subsection:usual_continuation_map}
Let $M$ be a closed smooth manifold, and $(f_0, g_0)$ and $(f_1, g_1)$ be two Morse-Smale pairs on $M$.

Consider the function $h:\mathbb{R}\rightarrow\mathbb{R}$ defined by $h(s)= s^3-\frac{3}{2}s^2$, which has a maximum in $0$ and a minimum in $1$. Also consider a function $\tilde{F}:M\times\mathbb{R}\rightarrow\mathbb{R}$ which is equal to $f_0$ at $s\leq\varepsilon$ and $f_1$ at $s\geq1-\varepsilon$, for some small $\varepsilon>0$.
We set $F=\tilde{F}+Ch$, for some $C>0$ big enough so that $$\frac{\partial \tilde{F}}{\partial s}(x,s)+Ch'(s)<0, \forall (x,s)\in M\times]0,1[.$$ We perturb the metric slightly to a metric $G$ on $M\times\mathbb{R}$ which is still equal to $g_0\oplus g_\mathbb{R}$ for $s\leq\varepsilon$ and equal to $g_1\oplus g_\mathbb{R}$ for $s\geq 1-\varepsilon$, where $g_\mathbb{R}$ is the standard metric on $\mathbb{R}$, and such that the pair $(F,G)$ is Morse-Smale. 

For any $k$, the index $k$ critical points of $F$ are
$$\text{Crit}_k(F)= \text{Crit}_{k-1}(f_0)\times\{0\}\cup\text{Crit}_{k}(f_1)\times\{1\}.$$ Furthermore, the only anti-gradient trajectories of $F$ that do not go towards infinity and do not correspond to anti-gradient trajectories of $f_0$ or $f_1$ are trajectories from critical points of $f_0$ to critical points of $f_1$ (since $F$ decreases along flow lines).

\begin{figure}
\labellist 
\small\hair 2pt
\pinlabel {$f_0$} [bl] at 185 150
\pinlabel {$f_1$} [bl] at 280 65
\pinlabel {$\mathbb{R}$} [bl] at 400 50
\endlabellist
  \centerline{\includegraphics[width=10cm, height=4cm]{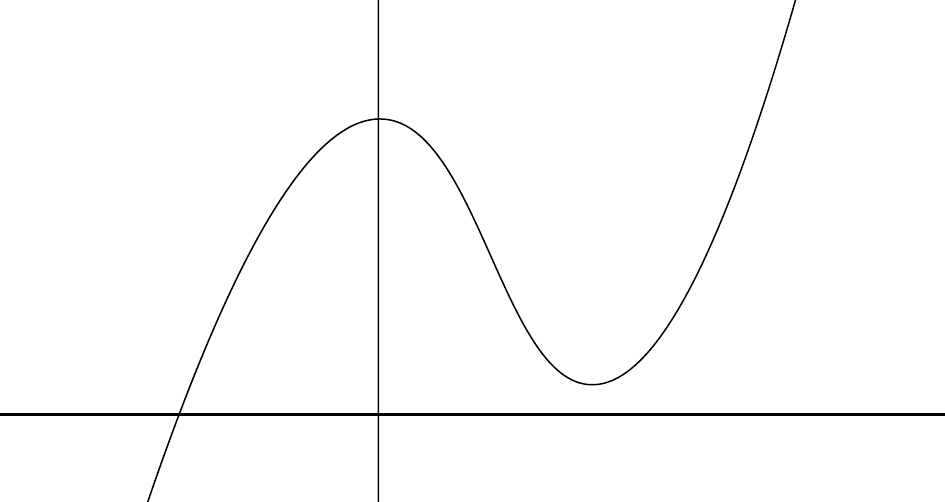}}
  \caption{$F: M\times\mathbb{R}\rightarrow\mathbb{R}$ which has a maximum at $s=0$ in $f_0$ and minimum at $s=1$ in $f_1$.}
  \label{fig:function_f1_f2}
\end{figure} 

For any critical point $y$ of $f_1$, anti-gradient trajectories of $F$ starting in $(y,1)$ stay in $M\times\{1\}$ and correspond exactly to anti-gradient trajectories of $f_1$. We therefore get 
\begin{equation}\label{diffeo_f_1}W^u((y,1))\simeq\mathcal{M}((y,1),M)\simeq\mathcal{M}(y,M)\simeq W^u(y),\end{equation} where the manifolds on the left are unstable manifolds of critical points $F$, and the ones on the right critical points of $f_1$. 

Furthermore, if $x$ is a critical point of $f_0$, we may look at the moduli space $\mathcal{M}((x,0),M\times[0,1])$ of trajectories which start in $(x,0)$ and go towards $M\times\mathbb{R}_+$:
\[
\mathcal{M}((x,0),M\times[0,1]) = \left\{\begin{array}{l}
  \gamma:\mathbb{R}_-\rightarrow M\times\mathbb{R}, \gamma'(t)=-\nabla F(\gamma(t)), \\[4pt] \lim_{t\rightarrow-\infty} \gamma(t)=(x,0), \\[4pt]
  \gamma(0)\in M\times\{u\}, 0\leq u<1
  \end{array}\right\}.
\]
As with ordinary Latour cells, it may be compactified by adding broken trajectories, which either break in points of $M\times\{0\}$ or in $M\times\{1\}$. We call such a compactification the {\em positive Latour cell} of $(x,0)$, and denote it $\overline{\mathcal{M}}((x,0),M\times[0,1])$. This has the topology of a half-disk of which the diameter is $\overline{\mathcal{M}}((x,0),M\times\{1\})$.

We then define the following map on portions of these cells by defining on each moduli space of broken trajectories:
\[ {\rm ev}_{f_1}: \overline{\mathcal{M}}((x,0),(y,1))\times\mathcal{M}((y,1),M\times\{1\})\rightarrow W^u(y),\] which is a composition of the map {\rm ev} and the diffeomorphism \eqref{diffeo_f_1}.

Let us now focus our attention on Morse loops for $f_0$ based at a point $\ast_0$ which is a local minimum of $f_0$, that is Morse loops in the set $\mathcal{L}(f_0, \ast_0)$. Each step is formed of the Latour cell of a point in ${\rm Crit}_1(f_0)$, and two are consecutive if the Latour cells contain broken trajectories which break in the same point of ${\rm Crit}_0(f_0)$.

Let $y$ be a point in ${\rm Crit}_0(f_0)$. Its Latour cell contains only the constant trajectory at itself. However, if we look at the point $(y,0)\in{\rm Crit}(F)$, it is an index 1 critical point. Its Latour cell is therefore a line segment, which corresponds to the parameterizations of two trajectories: one goes towards $-\infty$, and the other goes to a minimum of $F$ in $M\times\{1\}$, which corresponds to an index 0 critical point of $f_1$ which we call $\phi_F(y)$. The positive Latour cell therefore corresponds to the moduli space of parameterizations of the trajectory which goes from $(y,0)$ to $(\phi_F(y),1)$. 

Let $x$ be a point in ${\rm Crit}_1(f_0)$. Its Latour cell is homeomorphic to a closed interval: there are therefore two trajectories $\tilde{\beta}_1$ and $\tilde\beta_2$ from $x_1$ to two points in ${\rm Crit}_0(f_0)$, $y_1$ and $y_2$. The positive Latour cell $\overline{\mathcal{M}}((x,0),M\times[0,1])$ can be seen as a two-dimensional half-disk of which one boundary part corresponds to the Morse step through $x$. The rest of the boundary is formed of the two families of broken interrupted trajectories which break at $(y_1,0)$ and $(y_2,0)$, as well as a a connected union of moduli spaces of the form $$\overline{\mathcal{M}}((x,0),(p,1))\times\overline{\mathcal{M}}((p,1), M\times\{1\}),$$ where the index of $p$ alternates between 0 and 1. One may then consider the ordered sequence of connected components of these moduli spaces where the index of $p$ is 1. This sequence then corresponds to a sequence of consecutive Morse steps of $f_1$. If the step through $x$ begins at $y_1$, we may consider an orientation of these steps such that the sequence begins at $\phi_F(y_1)$ and ends at $\phi_F(y_2)$. See Figure \ref{fig:morphisme_phi_morse} for an illustration. We then define a map $\tilde{\phi}_F$ which to the step through $x$ assigns this sequence of consecutive steps. 

Equivalently, the map $\tilde{\phi}_F$ can be defined as the image of the part of the boundary made trajectories which break at $\{s=1\}$ by the evaluation map ${\rm ev}_{f_1}$. 

\begin{figure}
\labellist
\small\hair 2pt
\pinlabel {$(x,0)$} [bl] at 15 150
\pinlabel {$(y_1,0)$} [bl] at 50 290
\pinlabel {$(y_2,0)$} [bl] at 50 30
\pinlabel {$\beta_1$} [bl] at 30 220
\pinlabel {$\beta_2$} [bl] at 30 100
\pinlabel {$\alpha_1$} [bl] at 200 300
\pinlabel {$\alpha_2$} [bl] at 200 25
\pinlabel {$(\phi_F(y_1),1)$} [bl] at 310 305
\pinlabel {$(\phi_F(y_2),1)$} [bl] at 310 20
\endlabellist
  \centerline{\includegraphics[width=10cm]{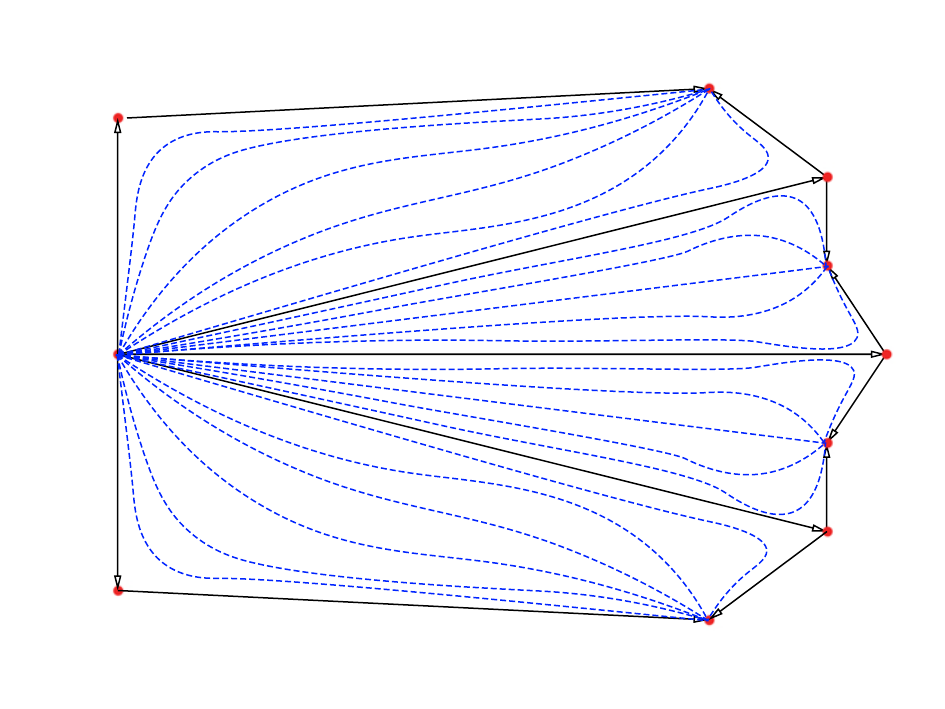}}
  \caption{Illustration of the the positive Latour cell of $(x,0)$.}
  \label{fig:morphisme_phi_morse}
\end{figure} 

Let us now see what happens if we concatenate the Morse step $\sigma$ through a point $x_1\in{\rm Crit}_1(f_0)$ with another Morse step $\sigma'$ through a point $x_2\in{\rm Crit}_1(f_0)$. Suppose $\sigma$ ends in $y_2$, and $\sigma'$ begins in $y_2$. There is a unique trajectory from $(y_2,0)$ to an index 0 critical point $(\phi_F(y_2),1)$. By construction, the series of steps in the image of $\sigma$ by $\widetilde{\phi_F}$ ends at $\phi_F(y_2)$, whereas the series of steps in the image of $\sigma'$ begins at $\phi_F(y_2)$. This means that $\tilde\phi_F$ is compatible with concatenation. If $\ell=\sigma_1…\sigma_k$ is a Morse loop for $f_0$ based in $\ast$, we may then set $$\phi_F(\ell):=\tilde{\phi}_F(\sigma_1)…\tilde{\phi}_F(\sigma_k)\in\mathcal{L}(f_1,\phi_F(\ast)).$$ 

This map is clearly compatible with concatenation of Morse loops and sends loops based in $\ast\in\text{Crit}_0(f_0)$ to loops based in $\phi_F(\ast)\in\text{Crit}_0(f_1)$. It also sends the constant loop at $\ast$ to the constant loop at $\phi_F(\ast)$ (where the constant loop is given by the empty word). Therefore, $\phi_F$ induces a well defined group morphism \[\phi_F: \mathcal{L}(f_0, \ast_0) \rightarrow \mathcal{L}(f_1,\phi_F(\ast_0)) \rightarrow \pi_1^{\text{Morse}}(f_1,\phi_F(\ast_0)).\] 

We will prove the following statement:
\begin{theorem}\label{thm:quotient}
Given two Morse-Smale pairs $(f_0,g_0)$, $(f_1,g_1)$ on a manifold $M$, a minimum $\ast_0$ of $f_0$, and an interpolation pair $(F,G)$ between them, $\phi_F$ descends to the quotient and induces a morphism \[\phi_F: \pi_1^{\text{Morse}}(f_0,\ast_0)  \rightarrow \pi_1^{\text{Morse}}(f_1,\phi_F(\ast_0)).\]
\end{theorem}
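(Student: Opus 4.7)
The plan is to verify that $\phi_F$ sends every generator of $\mathcal{R}(f_1,\ast_1)$ to the identity in $\pi_1^{\text{Morse}}(f_2,\ast_2)$. Since $\phi_F$ has already been shown to be a well-defined morphism from $\mathcal{L}(f_1,\ast_1)$ that is compatible with concatenation and sends the trivial step to the trivial step, this is the only thing that remains in order for it to descend to the quotient. A typical generator of $\mathcal{R}(f_1,\ast_1)$ is of the form $\gamma(\partial W^u(z))\gamma^{-1}$ for some $z\in\mathrm{Crit}_2(f_1)$ and some $f_1$-Morse path $\gamma$ from $\ast_1$ to the basepoint of $\partial W^u(z)$, so it is enough to exhibit $\phi_F(\partial W^u(z))$ as the oriented boundary of a disk formed by unstable manifolds of index $2$ critical points of $f_2$, and then to invoke Lemma \ref{lemma:trivial_disks}.

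First I would consider the critical point $(z,0)\in\mathrm{Crit}_3(F)$ and study the moduli space $\mathcal{M}_+((z,0),\varnothing)$ of positive half-trajectories from $(z,0)$ with free endpoint in $M\times[0,1)$. This is a three-dimensional half-disk whose compactification is bounded by a topological $2$-sphere. I expect its stratified boundary to split into three natural pieces: a \emph{bottom} $W^u(z)\subset M\times\{0\}$, coming from trajectories that never leave the slice $s=0$; \emph{sides} of the form $\mathcal{M}_+((x_i,0),\varnothing)$, one for each index $1$ critical point $x_i\in\mathrm{Crit}_1(f_1)$ appearing on $\partial W^u(z)$, arising as the codimension-one stratum of broken trajectories through the index $2$ point $(x_i,0)$; and a \emph{top} made of unstable manifolds $W^u((w_j,1))\subset M\times\{1\}$ of index $2$ critical points $w_j\in\mathrm{Crit}_2(f_2)$ reached from $(z,0)$ by a single breaking at $(w_j,1)$, which under the identification \eqref{diffeo_f_2} is a union $\bigcup_j W^u(w_j)$ of unstable manifolds of index $2$ critical points of $f_2$.

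Next I would track how the three strata are glued along their one-dimensional corners. The bottom contributes the $f_1$-loop $\partial W^u(z)=\sigma_1\cdots\sigma_k$. Each side $\mathcal{M}_+((x_i,0),\varnothing)$ has, by the very construction of $\tilde\phi_F$, the $f_1$-step $\sigma_i$ as one boundary edge in $M\times\{0\}$, the $f_2$-path ${\rm ev}_{f_2}(\sigma_i)$ as the opposite boundary edge in $M\times\{1\}$, and two short edges consisting of the unique trajectories connecting the relevant index $0$ critical points across the $s$-direction. Reading around the boundary of the top in $M\times\{1\}$, the oriented boundary of $\bigcup_j W^u(w_j)$ is thus, up to an overall orientation convention, precisely $\phi_F(\partial W^u(z))$. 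I would then conclude via Lemma \ref{lemma:trivial_disks}: when $\ast_2$ lies on this boundary the first half of the lemma applies directly, and otherwise, after applying $\phi_F$ to the conjugate $\gamma(\partial W^u(z))\gamma^{-1}$, the trivial $\ast_2$-based loop $\phi_F(\gamma)\phi_F(\gamma)^{-1}$ contains a step consecutive with $\phi_F(\partial W^u(z))$, so the second half of the lemma applies and delivers the triviality of the full conjugated element.

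The main obstacle will be checking the stratified structure of the boundary of $\mathcal{M}_+((z,0),\varnothing)$ rigorously: one has to confirm that the Morse--Smale transversality condition for $(F,G)$ yields the expected cobordism picture in this one-higher-dimensional setting, that the only codimension-one broken configurations are the ones enumerated above (no unexpected breaks through index $2$ points of $F$ other than $(x_i,0)$ and $(w_j,1)$, and no degenerations in the free-endpoint direction other than reaching $s=1$), and most importantly that the union of top disks assembles into a single topological disk in $M\times\{1\}$ rather than a more complicated piecewise-disk configuration whose boundary is still $\phi_F(\partial W^u(z))$ but to which Lemma \ref{lemma:trivial_disks} would not immediately apply. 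Once this stratified picture is in place, the Lemma does the rest.
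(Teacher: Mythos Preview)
Your overall strategy coincides with the paper's: study $\mathcal{M}_+((z,0),\varnothing)$ for $z\in\mathrm{Crit}_2(f_1)$, peel off a ``top'' piece in $M\times\{1\}$ whose boundary under ${\rm ev}_{f_2}$ is $\phi_F(\ell)$, and finish with Lemma~\ref{lemma:trivial_disks}. The paper in fact proves exactly the two preparatory facts you sketch (your bottom/sides/top decomposition is Lemma~\ref{lemma:boundary_1_disk}, and the boundary identification is Lemma~\ref{lemma:ell_and_phi_ell}).

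The gap is precisely the obstacle you flag at the end, and it is real, not a technicality. Your description of the top as ``$\bigcup_j W^u(w_j)$ for $w_j\in\mathrm{Crit}_2(f_2)$'' is too optimistic. The $2$-disk $\mathcal{M}((z,0),M\times\{1\})$ is built not only from pieces $\mathcal{M}((z,0),(w_j,1))\times\mathcal{M}((w_j,1),\varnothing)$ but also from pieces $\mathcal{M}((z,0),(q,1))\times\mathcal{M}((q,1),\varnothing)$ with $q\in\mathrm{Crit}_1(f_2)$ (and index~$0$ pieces). For such $q$, the one-dimensional moduli space $\mathcal{M}((z,0),(q,1))$ can have components whose \emph{both} ends break at points $(x_i,0)$ with $x_i\in\mathrm{Crit}_1(f_1)$: the resulting strip runs from one arc of $\partial\mathcal{M}((z,0),M\times\{1\})$ to another and its image under ${\rm ev}_{f_2}$ collapses to a single $f_2$-step $\sigma_q$. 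This means $\phi_F(\ell)$ contains $\sigma_q$ at one place and $\sigma_q^{-1}$ at a later, non-adjacent place, and the image of the top under ${\rm ev}_{f_2}$ is not one disk of unstable manifolds but several such disks joined by these collapsed segments. There is also the possibility that a component of $\mathcal{M}((z,0),(q,1))$ is a circle, producing an annulus in the top whose image under ${\rm ev}_{f_2}$ is a segment with a spherical bubble attached. Neither configuration is covered by the first clause of Lemma~\ref{lemma:trivial_disks}.

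The paper handles this by a case analysis on the index of the first break in $M\times\{1\}$. For the strip case it checks orientations to see that the two appearances of $\sigma_q$ are indeed inverse to each other and that distinct strips do not interlace (so the pattern is $\dots\sigma_q\dots\sigma_{q'}\dots\sigma_{q'}^{-1}\dots\sigma_q^{-1}\dots$ rather than a linked one); for the annulus case it argues that the bubble can be ignored since it does not touch the outer boundary. The upshot is that ${\rm ev}_{f_2}$ sends the top to a tree of disks-made-of-unstable-manifolds joined along single steps, and then the \emph{second} clause of Lemma~\ref{lemma:trivial_disks} (attaching a disk boundary to an already-trivial loop) is applied iteratively. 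Your proposal would need this analysis to close.
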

Since relations for $\pi_1^{\rm Morse}(f_0,\ast)$ come from Latour cells of index two critical points of $f_0$, in order to prove this theorem, one must study the positive Latour cell $\overline{\mathcal{M}}((z,0),M\times[0,1])$ associated to index 3 critical points $(z,0)$ of $F$, where $z\in\text{Crit}_2(f_0)$. This is a 3-dimensional closed disk. Its boundary can be decomposed in three parts: 
\begin{align*} \partial \mathcal{M}_{+}((z,0),M) = &\ \overline{\mathcal{M}}((z,0),M\times\{0\})\cup \\ &\partial\mathcal{M}((z,0),M\times]0,1[)\cup\mathcal{B}((z,0),M\times\{1\}),\end{align*}
where $\overline{\mathcal{M}}((z,0),M\times\{0\})$ is the part of the boundary composed of (broken) trajectories which end in $M\times\{0\}$, that is the compactification of
\[
\mathcal{M}((z,0),M\times\{0\}) = \left\{\begin{array}{l}
  \gamma:\mathbb{R}_-\rightarrow M\times\mathbb{R}, \gamma'(t)=-\nabla F(\gamma(t)), \\[4pt]
  \lim_{t\rightarrow-\infty} \gamma(t)=x, \gamma(0)\in M\times\{0\}
  \end{array}\right\},
\]
$\partial\mathcal{M}((z,0),M\times]0,1[)$ is the part of the boundary given by broken trajectories which end in $M\times\{u\}$, for $0<u<1$, and $\mathcal{B}((z,0),M\times\{1\})$ is the part of the boundary composed of broken trajectories which break in $M\times\{1\}$ (and therefore end in $M\times\{1\}$). 

We will start by proving the following lemma:
\begin{lemma}\label{lem:quotient} $\mathcal{B}((z,0),M\times\{1\})$ is topologically a compact 2-disk. Moreover, if $\ell=\partial W^u(z)$, then the evaluation the boundary of $\mathcal{B}((z,0),M\times\{1\})$ by ${\rm ev}_{f_1}$, with a choice of orientation, corresponds $\widetilde{\phi_F}(\ell)$. 
\end{lemma}
\begin{proof}Since $\overline{\mathcal{M}}((z,0),M\times[0,1])$ is topologically a 3-disk, its boundary is a 2-sphere. In order to show that $\mathcal{B}((z,0),M\times\{1\})$ is a 2-disk, we will therefore show that its complement in $\partial\overline{\mathcal{M}}((z,0),M\times[0,1])$ is itself a 2-disk. This complement is the union of $\overline{\mathcal{M}}((z,0),M\times\{0\})\cup\partial\mathcal{M}((z,0),M\times]0,1[)$ along their shared boundary $\partial\overline{\mathcal{M}}((z,0),M\times\{0\})$.

By construction, $\overline{\mathcal{M}}((z,0),M\times\{0\})$ is diffeomorphic to the Latour cell of $z$ as a critical point of $f_0$. Let us show that $\partial\mathcal{M}((z,0),M\times]0,1[)$ is a cylinder.
Indeed, it is formed of trajectories which are broken at least once at $u=0$. It is therefore the gluing of moduli spaces of the form
$$\overline{\mathcal{M}}((z,0),(p,0))\times\overline{\mathcal{M}}((p,0),M\times[0,1]),$$ where $p$ is an index 0 or 1 critical point of $f_0$, along shared boundary components. We start by supposing that there is a unique critical point $p$ in the unstable manifold of $z$. Then $p$ must be of index 0, and the moduli space $\overline{\mathcal{M}}((z,0),(p,0))\simeq\overline{\mathcal{M}}(z,p)\simeq\mathcal{S}^1$. On the other hand, $\overline{\mathcal{M}}((p,0),M\times[0,1])\simeq[0,1]$, since it is the set of parameterizations of the trajectory from $(p,0)$ to a unique minimum at $s=1$. Then this gives us $\mathcal{S}^1\times[0,1]$, which gives us a cylinder.

On the other hand, if there are several critical points in the unstable manifold of $z$, then for any such $p$ of index 0, any connected component of $\overline{\mathcal{M}}((z,0),(p,0))$ must be homeomorphic to a closed line segment. Indeed, it is homeomorphic to $\overline{\mathcal{M}}(z,p)$, which is a one dimensional connected manifold with boundary. This is because $\overline{\mathcal{M}}(z,M)$ is a closed disk, and so its boundary is a circle formed of the gluing of moduli spaces of the form $\mathcal{M}(z,p)\times\mathcal{M}(p,M)$ along compatible boundary elements. If $\overline{\mathcal{M}}(z,p)$ did not have boundary, this union would therefore be disconnected, which is impossible. Therefore, $\overline{\mathcal{M}}((z,0),(p,0))\times\overline{\mathcal{M}}((p,0),M\times[0,1])$ is the product of two line segments, and therefore a 2-disk. On the other hand, if $|p|=1$, then $\mathcal{M}((z,0),(p,0))\times\overline{\mathcal{M}}((p,0),M\times[0,1])$ is the product of a point and the positive Latour cell of an index 1 critical point, and so is also a disk. This is therefore a patch of 2-disks, each corresponding to a connected component of the disk $\mathcal{M}((z,0),M\times\{0\})$. One may order these components along the boundary of this disk. This therefore gives us a cyclical gluing of 2-disks which forms a cylinder.

The complement of $\mathcal{B}((z,0),M\times\{1\})$ in $\partial\overline{\mathcal{M}}((z,0),M\times[0,1])$ is therefore a disk with a cylinder attached to its boundary, which also forms a disk. This proves that $\mathcal{B}((z,0),M\times\{1\})$ is itself a disk. Its boundary matches the boundary component of the cylinder at $u=1$. The image by ${\rm ev}_{f_1}$ of the boundary at $\{s=1\}$ of the cylinder is the concatenation of the images of the boundary at $\{s=1\}$ of the positive Latour cell of each index 1 critical point which appears in $\overline{\mathcal{M}}((z,0),(x,0))$, in the order in which they appear along the boundary. By construction, if $\ell=\partial W^u(z)$, then the evaluation of the boundary at $u=1$ of the cylinder corresponds to $\tilde{\phi}_F(\ell)$. By the identification of the boundaries, this means that the image of the boundary of $\mathcal{B}((z,0),M\times\{1\})$ by the evaluation map also corresponds to $\tilde{\phi}_F(\ell)$.
\end{proof}

\begin{proof}[Proof of Theorem \ref{thm:quotient}] Recall that $ \pi_1^{\text{Morse}}(f_i,\ast_i) = \faktor{\mathcal{L}(f_i,\ast_i)}{\mathcal{R}(f_i,\ast_i)}.$ We need to show that $\mathcal{R}(f_1,\ast_1)\subset \ker(\phi_F)$. 
Let $\ell$ be a loop in $\mathcal{L}(f_1,\ast_1)$ that is the evaluation of the boundary of the Latour cell of some critical point $z \in {\rm Crit}_2(f_1)$. We will show that its image $\phi_F(\ell)$ is trivial in $\pi^{\text{Morse}}_1(f_2,\ast_2)$. 

As shown in the previous two lemmas, $\phi_F(\ell)$ is the image of the boundary of the disk $\mathcal{M}((z,0),M\times\{1\})$ by ${\rm ev}_{f_2}$. It bounds a disk formed of moduli spaces of trajectories in $M\times\mathbb{R}$, but this does not directly translate to moduli spaces of trajectories of $f_1$. To show that $\phi_F(\ell)$ is trivial, we must study the overall structure of $\mathcal{M}((z,0),M\times\{1\})$.
 
This moduli space is a disk formed of the union of moduli spaces of broken trajectories where the first break is in a critical point of the form $(q,1)$, for $q\in\text{Crit}(f_2)$ (we also include the boundaries of these moduli spaces, which may give us breaks in $M\times\{0\}$, but we will simply see these types of broken trajectories as appearing in the boundaries of the other type of moduli space, which are of dimension 2).  
Depending on the index of this critical point $q$, each moduli space has a different contribution to  $\overline{\mathcal{M}}((z,0),M\times\{1\})$ and to the evaluation of its boundary.

If $q$ is of index 2, then the image of $\mathcal{M}((z,0),(q,1))\times\overline{\mathcal{M}}((q,1),M\times\{1\})$ is diffeomorphic to the Latour cell of $q$. If $(p,1)$ is an index 1 or index 0 critical point in the boundary of the unstable manifold of $(q,1)$, then the moduli space $\overline{\mathcal{M}}((z,0),(p,1))\times\overline{\mathcal{M}}((p,1),M\times\{1\})$ joins $\mathcal{M}((z,0),(q,1))\times\overline{\mathcal{M}}((q,1),M\times\{1\})$ along their common boundary part $$\mathcal{M}((z,0),(q,1))\times\mathcal{M}((q,1),(p,1))\times\mathcal{M}((p,1),M\times\{1\}).$$ Therefore, if two index 2 critical points $(q,1)$ and $(q',1)$ share a compatible boundary part, then the associated moduli disks in $\mathcal{M}((z,0),M\times\{1\})$ which are separated by $\mathcal{M}((z,0),(p,1))\times\mathcal{M}((p,1),M\times\{1\})$. See Figure \ref{fig:index_2_moduli}. Latour cells of index 2 critical points therefore are disks which are separated or have ``holes'' which correspond to other types of moduli spaces of trajectories which break in critical points of smaller indexes. We must therefore understand what happens when $|q|<2$.

\begin{figure}
\begin{subfigure}{0.6\textwidth}
\labellist
\small\hair 2pt
\pinlabel {$\bullet$} [bl] at 265 265
\pinlabel {$\bullet$} [bl] at 580 265
\pinlabel {$z_1$} [bl] at 230 290
\pinlabel {$z_2$} [bl] at 545 290
\endlabellist
  \centerline{\includegraphics[width=6cm]{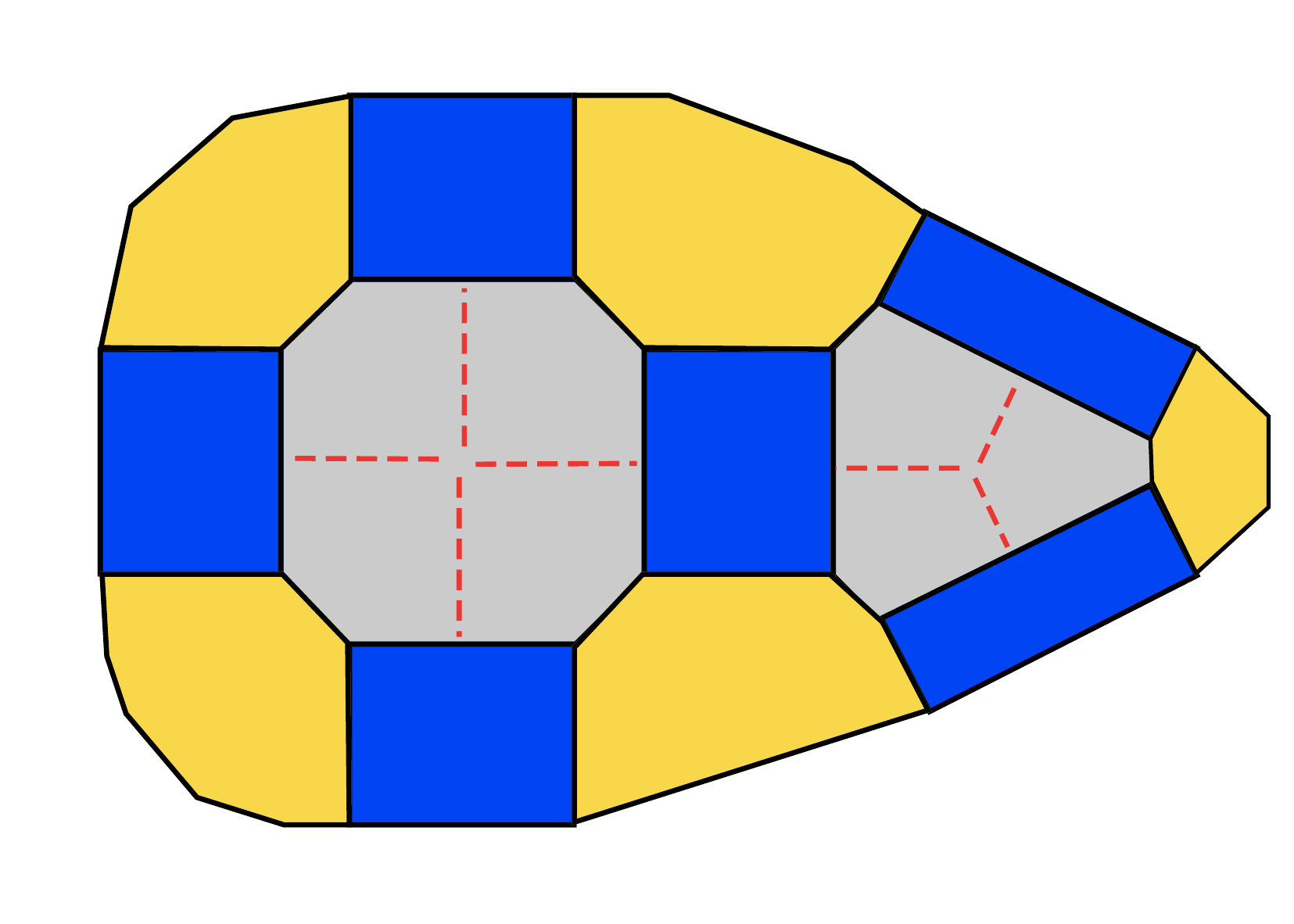}}
  \caption{$\overline{\mathcal{M}}((z,0),M\times\{1\})$.}
  \label{fig:index_2_moduli}
\end{subfigure}
\begin{subfigure}{0.4\textwidth}
\labellist
\small\hair 2pt
\pinlabel {$\bullet$} [bl] at 155 155
\pinlabel {$\bullet$} [bl] at 360 155
\pinlabel {$z_1$} [bl] at 120 175
\pinlabel {$z_2$} [bl] at 325 170
\endlabellist
  \centerline{\includegraphics[width=5cm]{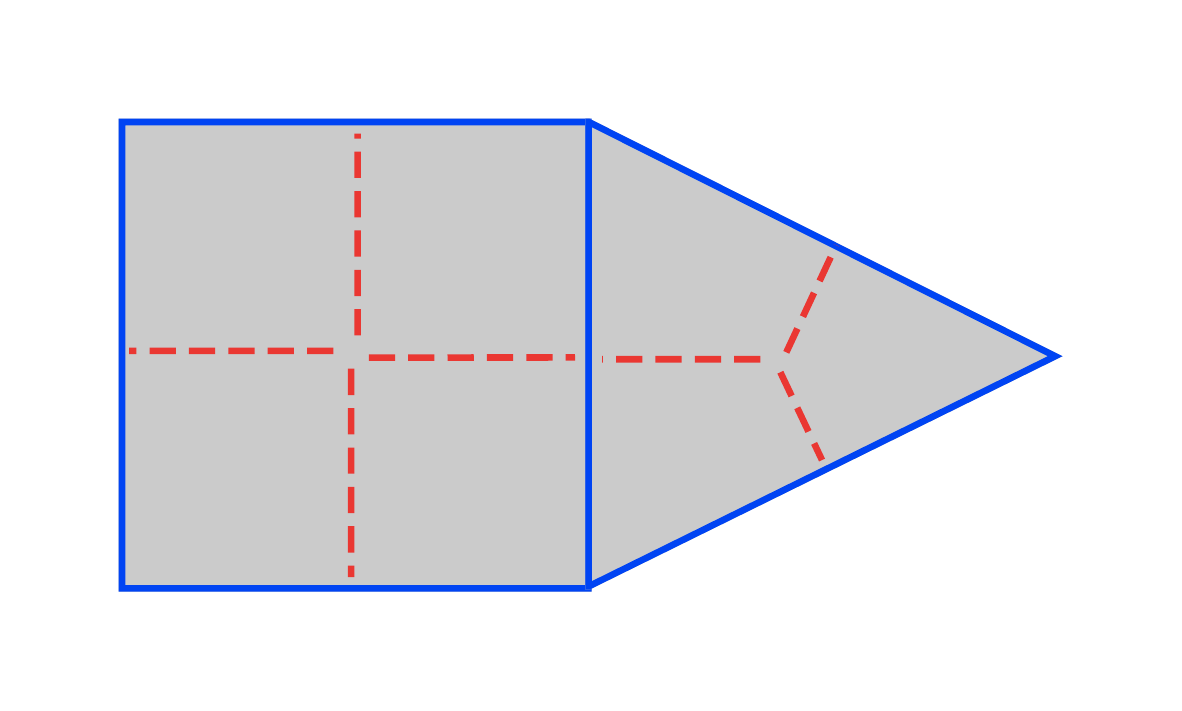}}
  \caption{The associated relation patch.}
  \label{fig:index_2_image}
\end{subfigure}
\caption{On the left, a schematic illustration of $\overline{\mathcal{M}}((z,0),M\times\{1\})$ formed of moduli spaces which correspond to breaks in index 2 critical points $z_1$ and $z_2$ (in grey, with the rigid trajectories from $z_1$ and $z_2$ depicted by dotted red lines), separated by moduli spaces which correspond to breaks in index 1 (in blue) or 0 (in yellow) critical point. On the right, the corresponding relation patch, where the blue lines correspond to the projection of the blue squares onto the second coordinate.}
\label{fig:index_2}
\end{figure}

If $q$ is of index 1, then we have several possibilities, depending on the boundary of $\overline{\mathcal{M}}((z,0),(q,1))$. If a component of $\overline{\mathcal{M}}((z,0),(q,1))$ does have boundary, then its trajectories must break in index 2 critical points of $F$, and the product with $\overline{\mathcal{M}}((q,1),M\times\{1\})$ forms a square, of which the image by ${\rm ev}_{f_2}$ is a line which corresponds to the unstable manifold of $q$. We first consider the case where both breaks are in points of the form $(z',1)$ and $(z'',1)$, for $z',z''\in\text{Crit}_2(f_2)$. In this case, the moduli space $\overline{\mathcal{M}}((z,0),(q,1))\times\overline{\mathcal{M}}((q,1),M\times\{1\})$ appears in the interior of the disk, between the moduli spaces $\mathcal{M}((z,0),(z',1))\times\overline{\mathcal{M}}((z',1),M\times\{1\})$ and $\mathcal{M}((z,0),(z'',1))\times\overline{\mathcal{M}}((z'',1),M\times\{1\})$. The moduli space $\overline{\mathcal{M}}((q,1),M\times\{1\})$ then corresponds to a compatible boundary part between the Latour cells of $z'$ and $z''$ (see the blue line which separates the grey disks in \ref{fig:index_2_image}). Alternatively, suppose that the moduli space $\overline{\mathcal{M}}((z,0),(q,1))$ breaks on one end at a point of the form $(z',1)$, for $z'\in\text{Crit}_2(f_2)$, and on the other end at a point of the form $(x_1,0)$, for $x_1\in\text{Crit}_1(f_1)$. Then the boundary at the break at $(x_1,0)$ appears on the outer boundary of $\overline{\mathcal{M}}((z,0),(q,1))$, and the given step through $q$, which is in the image of the step through $x_1$ by $\tilde{\phi}_F$, appears in the evaluation of the boundary of the Latour cell of $z'$. This corresponds to the blue lines which bound the disk in Figure \ref{fig:index_2_image}. In either of these cases, this sort of moduli space does does not prevent the Latour cells of index 2 critical points from forming a relation patch.

On the other hand, if both breaks are in points of the form $(x_1,0)$ and $(x_2,0)$, where $x_1,x_2\in\text{Crit}_1(f_0)$, then this is not the case. Call $\sigma_1$ and $\sigma_2$ the steps through $x_1$ and $x_2$, respectively. Then if $\sigma_q^{\pm1}$ are the steps through $q$, then $\sigma_q$ or $\sigma_q^{-1}$ are in the image of $\sigma_1$ and $\sigma_2$ by $\tilde{\phi}_F$. Suppose $\sigma_q$ is in the image of $\sigma_1$ by $\tilde{\phi}_F$. Then it corresponds to a choice of orientation for the moduli space $$\mathcal{M}((z,0),(x_1,0))\times\mathcal{M}((x_1,0),(q,1))\times\overline{\mathcal{M}}((q,1),M\times\{1\})$$ which is one boundary part of the square $$\overline{\mathcal{M}}((z,0),(q,1))\times\overline{\mathcal{M}}((q,1),M\times\{1\}).$$ Reading the boundary of this square with the compatible orientation, one can see that the opposite boundary part $$\mathcal{M}((z,0),(x_2,0))\times\mathcal{M}((x_2,0),(q,1))\times\overline{\mathcal{M}}((q,1),M\times\{1\})$$ has the opposite orientation. One can therefore deduce that $\sigma_q^{-1}$ appears in the image of $\sigma_2$. 

In this case, the first coordinate of the moduli space $\overline{\mathcal{M}}((z,0),(q,1))\times\overline{\mathcal{M}}((q,1),M\times\{1\})$ therefore forms a one-dimensional family which goes from one part of the boundary of the disk $\overline{\mathcal{M}}((z,0),M\times\{1\})$ to another. Hence, the product forms a strip in  $\overline{\mathcal{M}}((z,0),M\times\{1\})$ which cuts the disk in two. Furthermore, consider another such strip $\overline{\mathcal{M}}((z,0),(q',1))\times\overline{\mathcal{M}}((q',1),M\times\{1\})$, for $q'\in\text{Crit}_1(f_1)$, which also forms a one-dimensional family from one part of the boundary of $\overline{\mathcal{M}}((z,0),M\times\{1\})$ to another. Then these families cannot intersect, as that would mean there exists a trajectory which lies both in $\mathcal{M}((z,0),(q,1))$ and in $\mathcal{M}((z,0),(q',1))$. In particular, both boundary parts that each family connects must lie on the same side of the disk as one another. Then, depending on where the base point lies, we get $\tilde{\phi}_F(\ell)$ either of the form \begin{equation}\label{eq:ind_1_normal_1}\tilde{\phi}_F(\ell)=…\sigma_q…\sigma_q^{-1}…\sigma_{q'}…\sigma_{q'}^{-1}…\end{equation} or of the form \begin{equation}\label{eq:ind_1_normal_2}\tilde{\phi}_F(\ell)=…\sigma_q…\sigma_{q'}…\sigma_{q'}^{-1}…\sigma_q^{-1}…\end{equation}
If the parts represented by the dots in these equations bound relation patches, then this forms a trivial Morse loop. See Figure \ref{fig:strips}.
\begin{figure}
\begin{subfigure}{0.5\textwidth}
\labellist
\small\hair 2pt
\pinlabel {$\ell$} [bl] at 30 230
\pinlabel {$\sigma_1$} [bl] at 75 560
\pinlabel {$\sigma_2$} [bl] at 180 640
\pinlabel {$\ell'$} [bl] at 750 250
\pinlabel {$\sigma_1^{-1}$} [bl] at 150 18
\pinlabel {$\sigma_2^{-1}$} [bl] at 265 -20
\endlabellist
  \centerline{\includegraphics[width=5cm]{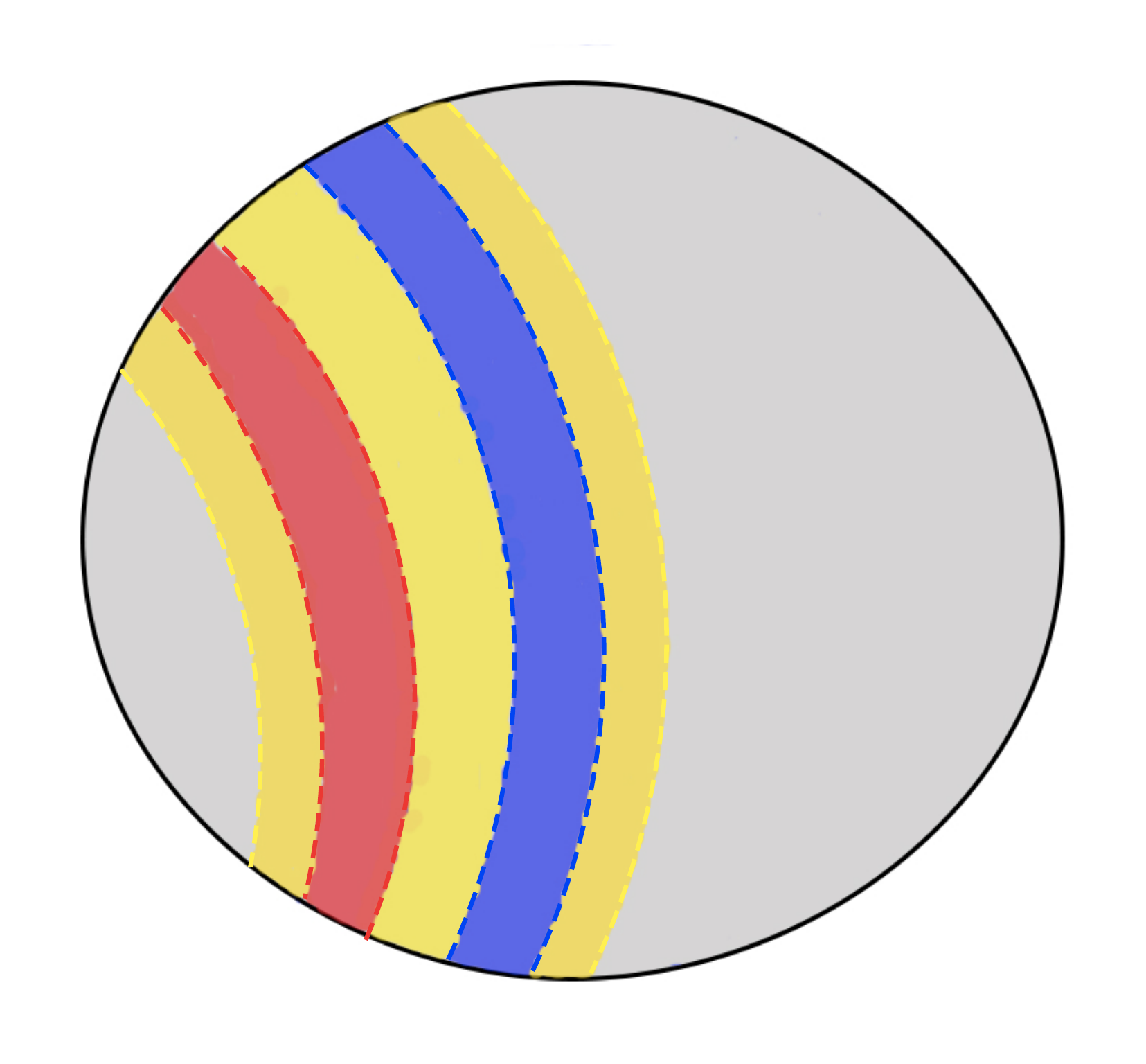}}
  \caption{Strips in $\overline{\mathcal{M}}((z,0),M\times\{1\})$.}
  \label{fig:disk_strips}
\end{subfigure}
\begin{subfigure}{0.5\textwidth}
\labellist
\small\hair 2pt
\pinlabel {$\ell$} [bl] at 10 210
\pinlabel {$\sigma_1$} [bl] at 310 225
\pinlabel {$\sigma_2$} [bl] at 440 260
\pinlabel {$\ell'$} [bl] at 900 250
\pinlabel {$\sigma_1^{-1}$} [bl] at 310 120
\pinlabel {$\sigma_2^{-1}$} [bl] at 440 165
\endlabellist
  \centerline{\includegraphics[width=5cm]{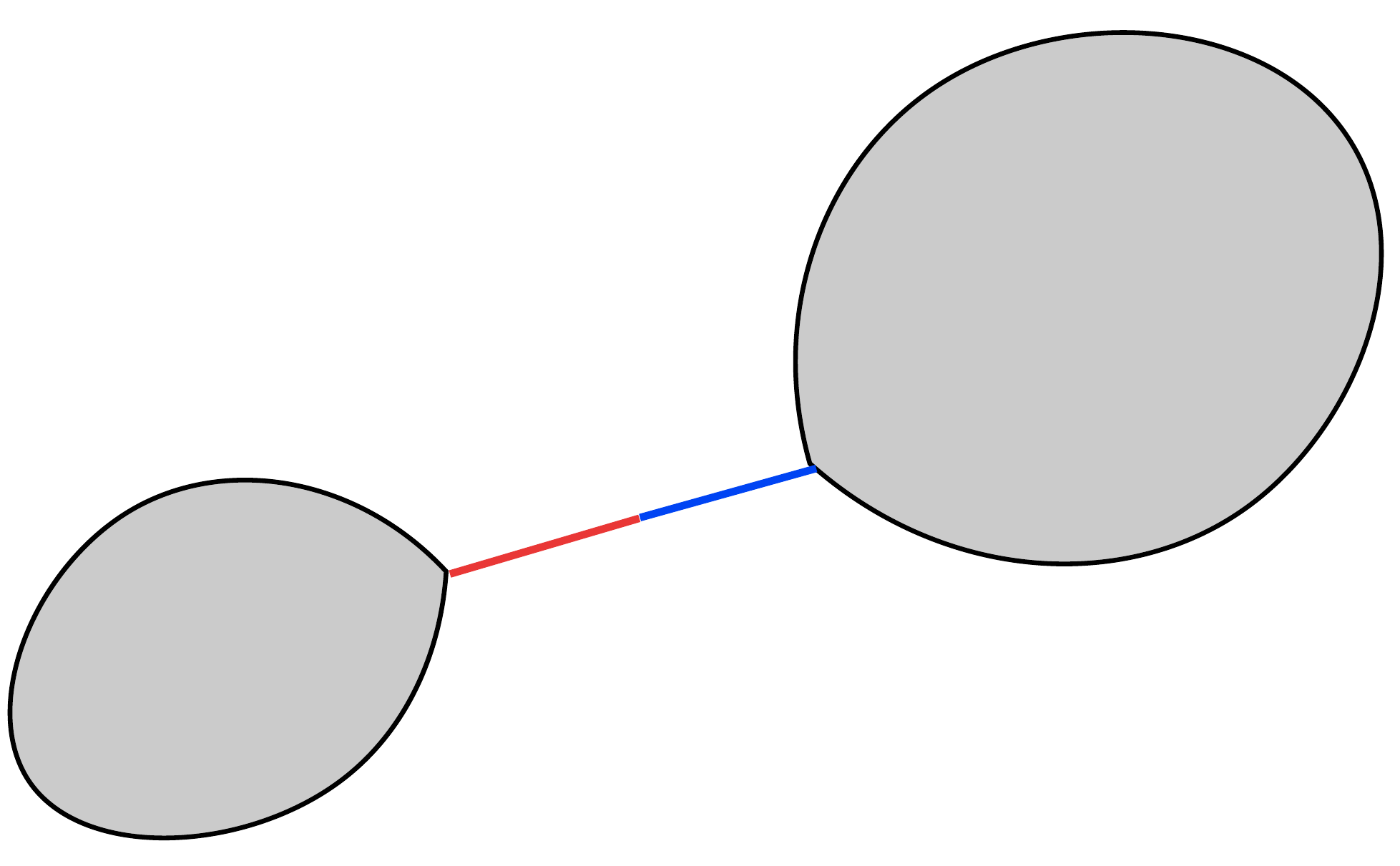}}
  \caption{Image of the strips by ${\rm ev}_{f_2}$.}
  \label{fig:image_strips}
\end{subfigure}
\caption{On the left, a schematic illustration of two strips (in red and blue) in $\overline{\mathcal{M}}((z,0),M\times\{1\})$ which correspond to moduli spaces of broken trajectories at index 1 critical points which go from one part of the boundary to another (surrounded by yellow strips which correspond to the moduli spaces of the form $\overline{\mathcal{M}}((z,1),(y,0))\times\overline{\mathcal{M}}((y,0),M\times\{1\})$, where each $y$ is an index 0 critical point in the unstable manifolds of the index 1 critical points). On the right, the image of the strips by ${\rm ev}_{f_2}$.}
\label{fig:strips}
\end{figure}

The last possibility when $q$ is of index 1 is for one of the components of $\overline{\mathcal{M}}((z,0),(q,1))$ to have no boundary. Then it is diffeomorphic to $\mathcal{S}^1$, and the product with $\overline{\mathcal{M}}((q,1),M\times\{1\})$ gives us an annulus in $\overline{\mathcal{M}}((z,0),M\times\{1\})$, which joins the rest of the disk along the two circles which form its boundary. The annulus therefore separates $\overline{\mathcal{M}}((z,0),M\times\{1\})$ into an outer annulus and an inner disk. Furthermore, the boundary of the annulus corresponds to a break at an index 0 critical point. All the moduli spaces which lie around on the outside of the annulus and share a boundary component with it therefore correspond to Latour cells of $f_1$ which share a compatible boundary part which corresponds to breaks at that critical point. We may therefore contract the inner disk and the annuli onto a point, and stitch together all the other moduli spaces along that boundary part. Since we are only interested in the loop that forms the boundary of $\overline{\mathcal{M}}((z,0),M\times\{1\})$, then we may assume there does not exist such annuli. For a helpful illustration, see Figure \ref{fig:annulus}.

Note that this same argument may be used when $q$ is of index 0. Indeed, the moduli space $\overline{\mathcal{M}}((z,0),(q,1))\times\mathcal{M}((q,1),M\times\{1\})$ glues together with the rest of the moduli spaces to form a disk. It it either contractible, or separates the moduli space into an outer annulus and several inner disks. In the same way as before, we may still contract it to a point and stitch together the rest of the moduli spaces which lie around its outer boundary along that point. 

\begin{figure}
\begin{subfigure}{0.5\textwidth}
  \centerline{\includegraphics[width=5cm]{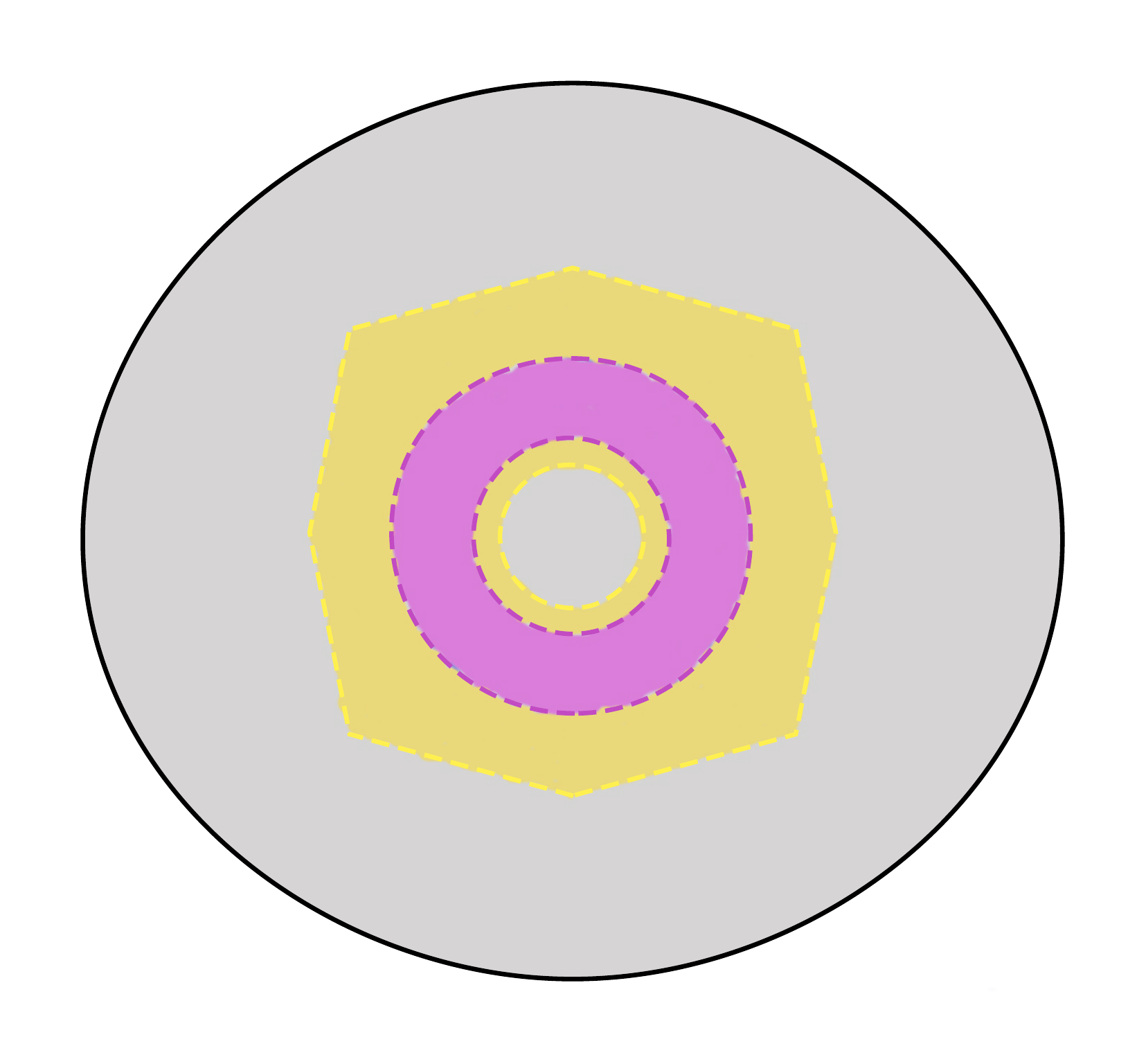}}
  \caption{An annulus in $\overline{\mathcal{M}}((z,0),M\times\{1\})$.}
  \label{fig:disk_annulus}
\end{subfigure}
\begin{subfigure}{0.5\textwidth}
  \centerline{\includegraphics[width=5cm]{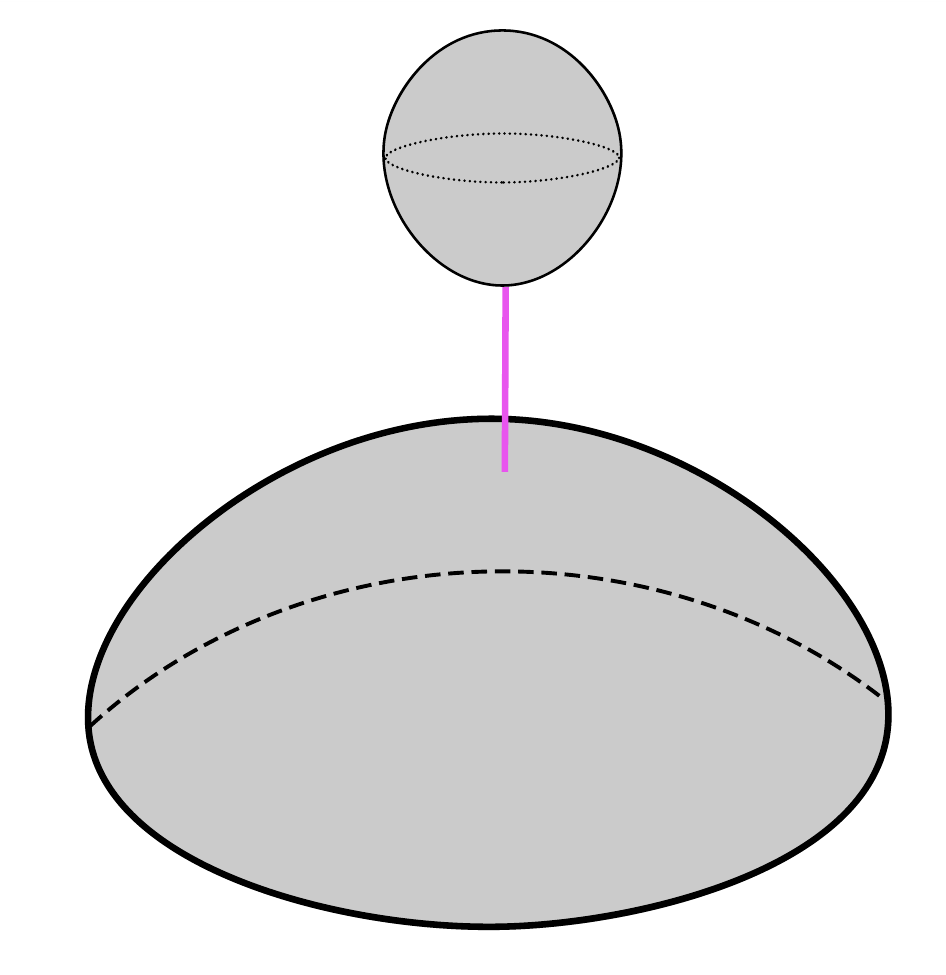}}
  \caption{Its image by ${\rm ev}_{f_1}$.}
  \label{fig:image_annulus}
\end{subfigure}
\caption{Schematic illustration of an annulus (in pink) in $\overline{\mathcal{M}}((z,0),M\times\{1\})$, as well as its image by ${\rm ev}_{f_1}$: the image of the annulus is the pink line, and the image of the part of the disk bounded by the annulus is a sphere. The yellow zones depict the moduli spaces of the form $\overline{\mathcal{M}}((z,1),(y,0))\times\mathcal{M}((y,0),M\times\{1\})$, where each $y$ is an index 0 critical point, and are therefore sent by ${\rm ev}_{f_2}$ onto points. Since we are interested in the image of boundary of the whole disk, one can ignore the pink line and the sphere attached.}
\label{fig:annulus}
\end{figure}

The image of the boundary of $\overline{\mathcal{M}}((z,0),M\times\{1\})$ by ${\rm ev}_{f_1}$ is therefore either formed of parts of the boundary of relation patches, or by steps which appear as in equations \eqref{eq:ind_1_normal_1} and \eqref{eq:ind_1_normal_2}. It is therefore in the normal subgroup of $\mathcal{L}(f_1,\phi_F(\ast_0))$ generated by relation patches. Then the fact that the Morse loop forming its boundary is trivial is a consequence of Lemma \ref{lemma:trivial_disks}. Therefore, $\phi_F(\ell)=0$. \end{proof}

\begin{remark} One may want to see what happens to such a map if we choose instead base points which are not minimums of our Morse functions, but rather generic points on $M$. For $\pi_1^{\rm Morse}(f_0,\ast_0)$, if $\ast_0$ is in the stable manifold of a minimum $y_0$ of $f_0$, this amounts to conjugating $y_0$-based loops with the half-trajectory which passes through $\ast_0$ and goes to $y_0$. We may then apply the morphism $\phi$, which gives us loops based in $\phi_F(y_0)$. However, there is no reason for $\ast_1$ to be in the stable manifold of $\phi_F(y_0)$. One must then choose a path from $\ast_1$ to $\phi_F(y_0)$, and push it down by the gradient flow of $f_1$. The result will be the concatenation of a half-trajectory from $\ast_1$ to a certain minimum of $f_1$, and then a series of steps which goes from that minimum to $\phi_F(y_0)$. We may then conjugate by this to get loops based in $\ast_1$. Unfortunately, there is no canonical way to make the choice of path from $\ast_1$ to $\phi_F(y_0)$ which depends purely on the Morse data.  
\end{remark}

\subsection{The grafted continuation map}\label{subsection:grafted_cont_map}
This construction can be extended to the case where $f_0$ and $f_1$ are two Morse functions over different manifolds by considering grafted flow lines (see \cite{C},\cite{KM}). More precisely, consider $\tilde{H}: X \rightarrow Y$ a differentiable map between two smooth, closed manifolds, $(f_0,g_0)$ a Morse-Smale pair over $X$, $(f_1,g_1)$ a Morse-Smale pair over $Y$. We may assume, up to a small perturbation of $H$, that no pairs of critical points of $f_0$ and $f_1$ lie in the graph of $H$. We call such a triple $(f_0,f_1,H)$ a set of grafted interpolation data between $f_0$ and $f_1$. 

Since no pairs of critical points lie on the graph $\Gamma(H)$ of $H$, we may perturb the metrics $g_0$ and $g_1$ slightly so that for any pair of critical points $x\in\text{Crit}(f_0), y\in\text{Crit}(f_1)$, $(W^u(x)\times W^s(y))\pitchfork \Gamma(H)$. This is a generic condition that is independent of the Morse-Smale condition on the pairs $(f_0,g_0)$ and $(f_1,g_1)$, and so we may assume that both conditions hold. 

$\tilde{H}$ naturally extends to a differentiable map $H:X\times\mathbb{R}\rightarrow Y\times\mathbb{R},\ (x, s) \mapsto (H(x), s)$.   
We set $F_0 = f_0+h$, $F_1=f_1+h$, where as in the non-grafted case,  $h(s)= s^3-\frac{3}{2}s^2$ (we can see each $F_i$ as a constant interpolation from $f_i$ to itself). We also extend the metrics to $M\times\mathbb{R}$ by taking a split metric with the usual metric on $\mathbb{R}$.

We are interested in the moduli spaces of grafted flow lines between critical points of $f_0$ (which correspond to critical points of $f_0$, with a Morse index shift of one) and critical points of $f_1$ (which correspond to critical points of $f_1$, with the same Morse index), that is the moduli space of pairs of paths $(\gamma_{-}, \gamma_{+})$ that satisfy 
\begin{align*} \gamma_{-}: \mathbb{R}_{-} &\rightarrow X\times\mathbb{R},\ \gamma_{-}'(t) = -\nabla f_0(\gamma_{-}(t)), \\
\gamma_{+}: \mathbb{R}_{+} &\rightarrow Y\times\mathbb{R},\ \gamma_{+}'(t) = -\nabla f_1(\gamma_{+}(t)),\end{align*}
and such that 
\begin{gather*} \lim_{t\rightarrow-\infty} \gamma_{-}(t) \in {\rm Crit}(f_0), \gamma_-(0)\in X \times\{1/2\},\\
\lim_{t\rightarrow+\infty} \gamma_{+}(t) \in {\rm Crit}(f_1), \gamma_+(0)\in Y \times\{1/2\},\\
H(\gamma_{-}(0))=\gamma_{+}(0).\end{gather*} See Figure \ref{fig:grafted_traj}.

\begin{figure}
  \centerline{\includegraphics[width=10cm]{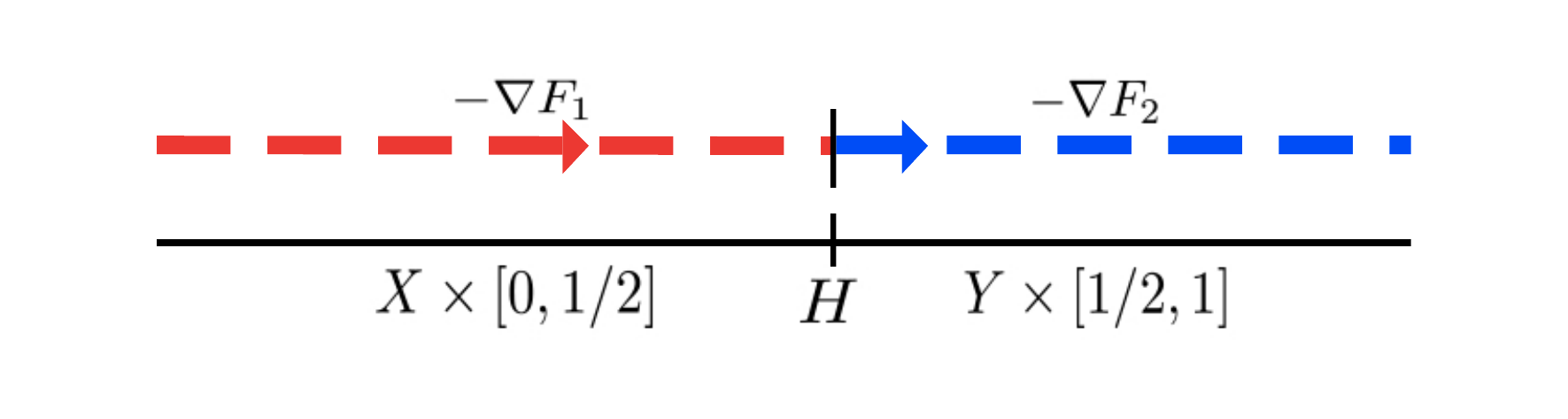}}
  \caption{A grafted trajectory.}
  \label{fig:grafted_traj}
\end{figure}

For $(x,0)\in{\rm Crit}(f_0)$, $(y,1)\in{\rm Crit}(f_1)$, if $\mathcal{M}_{\rm gr}((x,0),(y,1))$ is the moduli space of grafted trajectories between the two, we get a diffeomorphism: 
\begin{align*}\mathcal{M}_{\rm gr}((x,0),(y,1))&\rightarrow [W^{u}((x,0))\times W^{s}((y,1))] \cap \Gamma(H_{|\{s=1/2\}}),\\
(\gamma_-,\gamma_+)&\mapsto(\gamma_-(0),\gamma_+(0)),\end{align*}
where $\Gamma(H)$ is the graph of $H$. We have chosen metrics so that $\Gamma(H_{|\{s=1/2\}})$ intersects $W^u((x,0))\times W^s((y,1))$ transversally for any $x\in{\rm Crit}(f_0)$ and $y\in{\rm Crit}(f_1)$: since the dynamics are split and the gradient flow of $h$ intersects $\{s=1/2\}$ transversally, this is equivalent to the condition $(W^u(x)\times W^s(y))\pitchfork \Gamma(H)$. It follows that
\begin{align*}
\dim\mathcal{M}_{\rm gr}(x,y)=&\ {\rm ind}_{f_0}((x,0))+\dim (Y\times\mathbb{R}_+)-{\rm ind}_{f_1}((y,1))\\&+\dim \Gamma(H_{|\{s=1/2\}}) -\dim ((X\times\mathbb{R}_-)\times(Y\times\mathbb{R}_+)),\\
=&\ {\rm ind}_{f_0}((x,0)) + \dim Y + 1 - {\rm ind}_{f_1}((y,1)) \\&+\dim X - \dim X - 1 -\dim Y -1,\\
\dim\mathcal{M}_{\rm gr}(x,y)=&\ {\rm ind}_{f_0}((x,0))-{\rm ind}_{f_1}((y,1))-1.
\end{align*}This implies that for there to be a finite amount of grafted trajectories between two points, their Morse indices must be one apart. 

More generally, one can look at the moduli space $$\mathcal{M}_{{\rm gr}}((x,0),X\times[0,1/2[\cup Y\times[1/2,1[)$$ of grafted trajectories from $(x,0)\in\text{Crit}(f_0)$ which move in the positive direction, without specifying the end. This is the analog to $\mathcal{M}((x,0),M\times[0,1])$ in the non grafted case. To be more precise, this is the moduli space of pairs of trajectories $(\gamma_-,\gamma_+)$, such that there exists $u\in[0,1[$ which satisfies:
\begin{gather*}\text{ if $u<1/2$, } \gamma_{-}: \mathbb{R}_{-} \rightarrow X\times\mathbb{R},\ \gamma_{-}'(t) = -\nabla f_0(\gamma_{-}(t)),
\\ \lim_{t\rightarrow-\infty} \gamma_{-}(t) =(x,0), \gamma_-(0)\in X\times\{u\}, \gamma_+=\varnothing,\\
\text{ and if $u\geq 1/2$},  \gamma_{-}: \mathbb{R}_{-} \rightarrow X\times\mathbb{R},\ \gamma_{-}'(t) = -\nabla f_0(\gamma_{-}(t)),
\\ \lim_{t\rightarrow-\infty} \gamma_{-}(t) =(x,0), \gamma_-(0)\in X\times\{1/2\},\\
\gamma_{+}: [0,v] \rightarrow Y\times\mathbb{R},\gamma_{+}'(t) = -\nabla f_1(\gamma_{+}(t)),\\ H(\gamma_-(0))=\gamma_+(0), \gamma_+(v)\in Y\times\{u\}.\end{gather*}
\begin{remark} Notice that $v$ is entirely determined by $u$: since the dynamics are split on $Y\times\mathbb{R}$, one can compute that the real coordinate of $\gamma_+$ is given by $\frac{1}{1+e^{-3t}}$. One can then see that for $\gamma_+(v)$ to be in $M\times\{u\}$, then $v=\frac{1}{3}\log(\frac{u}{1-u})$.
\end{remark}
This moduli space can be seen as the gluing of 
\[
\mathcal{M}((x,0),X\times[0,1/2]) = \left\{\begin{array}{l}
    (\gamma_-,M), \gamma_-:\mathbb{R}_-\rightarrow  X\times\mathbb{R}, \\[3pt] \gamma_{-}'(t) = -\nabla f_0(\gamma_{-}(t)), \\[3pt]
    \lim_{t\rightarrow-\infty} \gamma_{-}(t)=(x,0), \\[3pt] \gamma_{-}(0)\in X\times[0,1/2]
  \end{array}\right\},
\]
and 
\[
 \mathcal{M}_{\rm gr}((x,0),Y\times[1/2,1[) = \left\{\begin{array}{l}
    (\gamma_-, \gamma_+), \gamma_-:\mathbb{R}_{-} \rightarrow X\times\mathbb{R},\\[3pt]
    \gamma_{-}'(t) = -\nabla f_0(\gamma_{-}(t)),\\[3pt]
    \lim_{t\rightarrow-\infty} \gamma_{-}(t) =(x,0),\\[3pt]
    \gamma_-(0)\in X\times\{1/2\}, \gamma_{+}: [0,v] \rightarrow Y\times\mathbb{R},\\[3pt]
    \gamma_{+}'(t) = -\nabla f_1(\gamma_{+}(t)),H(\gamma_-(0))=\gamma_+(0),\\[3pt]
    \gamma_+(v)\in Y\times\{u\}, u\in[1/2,1[
  \end{array}\right\},
\]
along $H_{|\{s=1/2\}}$ (more specifically, we quotient the union of these two sets by the equivalence relation which identifies a trajectory which ends in $X\times\{1/2\}$ with the corresponding grafted trajectory which ends in $Y\times\{1/2\}$). One can see that $\mathcal{M}((x,0),X\times[0,1/2[)$ is diffeomorphic to $W^u((x,0))\cap (X\times[0,1/2[)$, which is diffeomorphic to $W^u(x)\times[0,1/2[$ due to the split nature of the flow, whereas, thanks to the lack of critical points in $Y\times[1/2,1[$, $\mathcal{M}_{\rm gr}((x,0),Y\times[1/2,1[)$ is diffeomorphic to $(W^u((x,0))\cap (X\times\{1/2\}))\times\mathbb{R}_+$, which is diffeomorphic to $W^u(x)\times\{1/2\}\times\mathbb{R}_+$. In particular, $$\dim\mathcal{M}_{{\rm gr}}((x,0),X\times[0,1/2[\cup Y\times[1/2,1[)={\rm ind}_{F_0}((x,0))={\rm ind}_{f_0}(x)+1.$$

We may compactify this moduli space by adding broken grafted trajectories. The boundary is therefore composed of moduli spaces of the form
\begin{gather*}\mathcal{M}((x,0),(x_1,0))\times…\times\mathcal{M}((x_{n-1},0), (x_n,0))\\ \times\mathcal{M}_{\rm gr}((x_n,0),X\times[0,1/2[\cup Y\times[1/2,1[),\end{gather*} where $\text{ind}_{f_0}(x)>…>\text{ind}_{f_0}(x_n)$, or of the form
\begin{gather*}\mathcal{M}((x,0),(x_1,0))\times…\times\mathcal{M}((x_{n-1},0), (x_n,0))\times\mathcal{M}_{\rm gr}((x_n,0),(y_1,1))\\ \times\mathcal{M}((y_1,1),(y_2,1))\times…\times\mathcal{M}((y_{m-1},1),(y_m,1))\times\mathcal{M}((y_m,1),Y\times\{1\}),\end{gather*} where $\text{ind}_{f_0}(x)>…>\text{ind}_{f_0}(x_n)>\text{ind}_{f_1}(y_1)>…>\text{ind}_{f_1}(y_m)$. As in the non-grafted case, these compactified moduli spaces have the topology of closed disks of dimension $\text{ind}_{f_0}(x)+1$.
If the end of the broken grafted trajectory is in $Y\times\{1\}$, we set
\[{\rm ev_{gr}}(([\gamma_{-,1}],…,\gamma_{-,n},\gamma_{+,1},[\gamma_{+,2}],…,[\gamma_{+,m-1}],\gamma_{+,m}))=\gamma_{+,m}(0),\] In this case, since the diffeomorphism \eqref{diffeo_f_1} still applies in the non grafted case, we may compose this ${\rm ev_{gr}}$ with that diffeomorphism to get a map
\[{\rm ev}_{{\rm gr},f_1}:\overline{\mathcal{M}}_{\rm gr}((x,0),(y,1))\times\mathcal{M}((y,1), Y\times\{1\}) \rightarrow W^u(y).\]

If $(x,0)$ is of index 2 (corresponding to the index 1 critical point $x$ of $f_0$), then the Morse step through $x$ appears in the boundary of $\overline{\mathcal{M}}_{{\rm gr}}((x,0),X\times[0,1/2[\cup Y\times[1/2,1[)$ as the boundary part $\overline{\mathcal{M}}((x,0),M\times\{0\})$. The moduli space formed of breaks after the graft give us a concatenation of $f_1$-steps in $Y\times\{1\}$. We may then define a map which to the Morse step through $x$ gives us the concatenation of $f_1$-steps that appear in the boundary part $\mathcal{M}_{{\rm gr}}((x,0),Y\times\{1\})$. 
Like in the non-grafted case, for any minimum of $f_0$, there is a unique grafted trajectory from that minimum to a minimum of $f_1$. We may then do the same construction as before, as in particular the same concatenation rules apply as before, and we get a group morphism:
\[\tilde\phi^{\rm gr}_H: \mathcal{L}(f_0, \ast_0) \rightarrow \mathcal{L}(f_1,\ast_1) \rightarrow \pi_1^{\text{Morse}}(f_1,\phi_H^{\rm gr}(\ast_0)),\]
where $(\phi_H^{\rm gr}(\ast_0),1)\in Y\times\mathbb{R}$ is the minimum reached by the grafted trajectory from $(\ast_0,0)\in X\times\mathbb{R}$. 

\begin{theorem}\label{thm:quotient_grafted}
Let $(f_0,g_0)$ be a Morse-Smale pair on a manifold $X$ and $(f_1,g_1)$ be a Morse-Smale pair on a manifold $Y$, and $(f_0,f_1, H)$ a set of grafted interpolation data between $f_0$ and $f_1$, and metrics on $X$ and $Y$ such that the graph of $H$ intersects $W^u(x)\times W^s(y)$ transversally for any $x\in {\rm Crit}(f_0),y\in{\rm Crit}(f_1)$. Then $\tilde\phi^{\rm gr}_H$ descends to the quotient and induces a morphism $$\phi^{\rm gr}_H: \pi_1^{\text{Morse}}(f_0,\ast_0)\rightarrow \pi_1^{\text{Morse}}(f_1,\phi_H^{\rm gr}(\ast_0)).$$ 
\end{theorem}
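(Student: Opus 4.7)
The plan is to mirror the proof of Theorem \ref{thm:quotient} almost verbatim, replacing anti-gradient trajectories of $F$ on $M \times \mathbb{R}$ by grafted trajectories, and justifying that the same topological structure of boundary strata persists under the grafted transversality hypothesis. Concretely, I need to show that for every $z \in \text{Crit}_2(f_1)$, the image under $\phi^{\rm gr}_H$ of the $f_1$-loop $\ell = \partial W^u(z)$ is trivial in $\pi_1^\text{Morse}(f_2,\ast_2)$.

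First I would establish a grafted analog of Lemma \ref{lemma:boundary_1_disk}. For $z \in \text{Crit}_2(f_1)$, the point $(z,0)$ has index $3$ in $F_1$, and the dimension formula together with the transversality hypothesis on $\Gamma(H_{|\{s=1/2\}})$ shows that $\mathcal{M}_{{\rm gr},+}((z,0),\varnothing)$ is topologically a $3$-disk. Its boundary $2$-sphere decomposes as
\[\partial \mathcal{M}_{{\rm gr},+}((z,0),\varnothing) = \overline{\mathcal{M}((z,0),X\times\{0\})} \cup \partial \mathcal{M}_{{\rm gr}}((z,0),\text{intermediate}) \cup \mathcal{M}_{{\rm gr}}((z,0),Y\times\{1\}),\]
where the first piece is diffeomorphic to $W^u(z) \subset X$ (a $2$-disk), and the middle piece is a cylinder built exactly as before: each $f_1$-step through $x \in \text{Crit}_1(f_1)$ in $\ell$ contributes the half-disk $\mathcal{M}((z,0),(x,0)) \times \mathcal{M}_{{\rm gr},+}((x,0),\varnothing)$, and these glue along the shared boundary components $\mathcal{M}((z,0),(y,0)) \times \mathcal{M}_{{\rm gr},+}((y,0),\varnothing)$ for $y \in \text{Crit}_0(f_1)$. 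Since the first two pieces glue to a $2$-disk along the cylinder's $\{u=0\}$ end, the third piece $\mathcal{M}_{{\rm gr}}((z,0),Y\times\{1\})$ must also be a $2$-disk.

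Second, I would establish the grafted analog of Lemma \ref{lemma:ell_and_phi_ell}: with compatible orientations, the boundary of $\mathcal{M}((z,0),X\times\{0\})$ maps under ${\rm ev}_{f_1}$ to $\ell$, while the boundary of $\mathcal{M}_{{\rm gr}}((z,0),Y\times\{1\})$ matches, via ${\rm ev}_{{\rm gr},f_2}$, the image $\phi^{\rm gr}_H(\ell)$ — by the same matching-of-orientations argument, since both boundaries agree (with opposite orientations) as strata of the cylinder's $\{u = 1\}$ end.

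Third, I would carry out the same combinatorial analysis of $\mathcal{M}_{{\rm gr}}((z,0),Y\times\{1\})$ by the index of the first break $(q,1)$, which now occurs after the graft. Breaks at $q \in \text{Crit}_0(f_2)$ contribute points and are ignored; breaks at $q \in \text{Crit}_2(f_2)$ contribute full unstable manifolds $W^u(q)$; breaks at $q \in \text{Crit}_1(f_2)$ with both endpoints in $\text{Crit}_2(f_2)$ give separating lines between such unstable disks; breaks at $q \in \text{Crit}_1(f_2)$ with one endpoint at some $(x_1,0)$ with $x_1\in\text{Crit}_1(f_1)$ give boundary segments of those unstable disks; and breaks at $q \in \text{Crit}_1(f_2)$ with both endpoints at some $(x_i,0) \in \text{Crit}_2(F_1)$ give interior strips cutting the disk, whose image is a segment representing a cancelling pair $\sigma_q\sigma_q^{-1}$. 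Annuli and bubbles that appear in the interior are sent to line segments/spheres that do not affect the boundary loop. Thus $\phi^{\rm gr}_H(\ell)$ is built from boundaries of subdisks of $Y$ that are unions of index-$2$ unstable manifolds, glued along trivial subwords, and Lemma \ref{lemma:trivial_disks} yields $\phi^{\rm gr}_H(\ell) = 1$.

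The main obstacle is the first step: verifying that under the grafted transversality assumption, $\mathcal{M}_{{\rm gr},+}((z,0),\varnothing)$ and all its boundary strata have the expected dimensions and gluings so that it is genuinely a $3$-disk with the stated decomposition. The difficulty is purely that the grafting constraint $H(\gamma_-(0)) = \gamma_+(0)$ is imposed on a single slice, and one must check that this constraint does not introduce unexpected strata in the compactification (e.g. configurations where a breaking happens exactly at $s = 1/2$ on either side of the graft). Once the moduli space structure is in hand, the rest of the argument is essentially formal from the non-grafted case.
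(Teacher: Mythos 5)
Your proposal mirrors the paper's proof: the paper also reduces to showing $\mathcal{M}_{\rm gr}((z,0),Y\times\{1\})$ is a disk (the grafted analog of Lemma \ref{lemma:boundary_1_disk}), identifies its boundary with $\phi^{\rm gr}_H(\ell)$ via ${\rm ev}_{{\rm gr},f_2}$ (the grafted analog of Lemma \ref{lemma:ell_and_phi_ell}), and re-runs the combinatorial case analysis from Theorem \ref{thm:quotient} verbatim with grafted trajectories. The one spot where the paper is slightly more direct is in establishing the $3$-disk structure: rather than re-deriving the three-piece boundary decomposition of Lemma \ref{lemma:boundary_1_disk}, the paper exploits the split product form $F_i = f_i + h$ (constant in the $X$- or $Y$-direction) to identify each constant-$u$ slice of $\mathcal{M}_{{\rm gr},+}((z,0),\varnothing)$ with the disk $W^u(z)$, so the moduli space is visibly a filled cylinder whose $u=1$ compactification is the required disk; this also renders moot your flagged concern about breakings exactly at $s=1/2$, since $F_1$ and $F_2$ have no critical points in the interpolation region.
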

\begin{proof}
We must show that $\mathcal{R}(f_0,\ast_0)\subset\ker(\tilde\phi^{\rm gr}_H)$. Take a loop $\ell$ which is the boundary of the unstable manifold of an index 2 critical point $z$ of $f_0$. This point corresponds to an index 3 critical point $(z,0)$ of $f_0$. The proof follows a similar structure to that of the proof of Theorem \ref{thm:quotient}: we will show that the boundary part $\mathcal{B}_{\rm gr}((z,0),Y\times\{1\})$ of $\mathcal{M}_{\rm gr}((z,0),X\times[0,1/2[\cup Y\times[1/2,1[)$, formed of broken trajectories which end (and therefore break) in $Y\times\{1\}$, is a disk, and identify the boundary of said disk with $\phi^{\rm gr}_H(\ell)$ (the grafted analog of Lemma \ref{lem:quotient}). We will then use this identification to prove that $\phi^{\rm gr}_H(\ell)$ must be trivial.

Let us look at $\overline{\mathcal{M}}_{\rm gr}((z,0),X\times[0,1/2[\cup Y\times[1/2,1[)$. It is a topological closed disk of dimension 3. Its boundary is therefore a sphere. The boundary part at $u=1$ is the moduli space $\overline{\mathcal{M}}((z,0),X\times\{0\})$ and is therefore a disk. Furthermore, each "slice" $\mathcal{M}_{\rm gr}((z,0),\{u\})$ is of the form $\overline{\mathcal{M}}((z,0),X\times\{0\})\times\{u\}$ if $0<u<1/2$, and 
$\overline{\mathcal{M}}_{\rm gr}((z,0),\{0\}))\times\{u\}\times\{v\}$ when $1/2\leq u<1$ (where $v$ depends on $u$) and are therefore disks. One can then deduce that $\overline{\mathcal{M}}_{\rm gr}((z,0),X\times]0,1/2[\cup Y\times[1/2,1[)$ is a 3-dimensional filled cylinder which goes from $u=0$ to $u=1$. The boundary at $u=1$ must therefore be a disk, as its complement in the sphere is a disk, and of which its own boundary can be identified with $\tilde{\phi}_H^{\rm gr}(\ell)$ by construction. The grafted analog of Lemma \ref{lem:quotient} therefore holds.

Since the moduli spaces involved have the same topology and properties as in the non-grafted case, the arguments from the proof of Theorem \ref{thm:quotient} hold, with the only change being that we replace moduli spaces of the form $\overline{\mathcal{M}}((p,0),(p,1))$ and $\overline{\mathcal{M}}((p,0),M\times[0,1])$ with respectively $\overline{\mathcal{M}}_{\rm gr}((p,0),(p,1))$ and $\overline{\mathcal{M}}_{\rm gr}((p,0),X\times[0,1/2[\cup Y\times[1/2,1[)$, and {\rm ev} by ${\rm ev_{gr}}$. We also use ${\rm ev}_{{\rm gr},f_1}$ instead of ${\rm ev}_{f_1}$, and for trajectories that end in $X\times\{0\}$, we use the diffeomorphism $\overline{\mathcal{M}}((p,0),X\times\{0\})\simeq\overline{\mathcal{M}}(p,X)$ (the moduli space on the right hand side referring to anti-gradient trajectories of $f_0$) and the map ${\rm ev}_{f_0}$ in the same way as in the non-grafted case, which allows us to identify $\ell$ in the boundary of $\overline{\mathcal{M}}_{\rm gr}((z,0),\{0\})$ and $\tilde{\phi}_H^{\rm gr}(\ell)$ in the boundary of $\mathcal{B}_{\rm gr}((z,0),Y\times\{1\})$. These arguments allow us to conclude that $\phi_H^{\rm gr}(\ell)$ is trivial, which concludes our proof.
\end{proof}

\section{Functoriality of the continuation map}\label{section:functoriality}
\subsection{Three pairs of Morse data on a same manifold}\label{subsection:functoriality}
We want to show the functoriality of $\phi$. For this, let's take a third Morse-Smale pair  $(f_2,g_2)$ on $M$. We fix three interpolation pairs $(F_{01}, G_{01})$, $(F_{12}, G_{12})$ and $(F_{02}, G_{02})$ on $M\times\mathbb{R}$, which allow us to define $\phi_{01}: \pi_1^\text{Morse}(f_0) \rightarrow \pi_1^\text{Morse}(f_1)$, $\phi_{12}: \pi_1^\text{Morse}(f_1) \rightarrow \pi_1^\text{Morse}(f_2)$, and $\phi_{02}: \pi_1^\text{Morse}(f_0) \rightarrow \pi_1^\text{Morse}(f_2)$. We will show that $\phi_{12}\circ\phi_{01} = \phi_{02}\circ id = \phi_{02}$. 

Let us now look at a Morse function $\tilde F:M\times [0,1]\times[0,1] \rightarrow \mathbb{R}$, such that there exists $\varepsilon>0$ such that:
\begin{itemize}
\item $\tilde{F}|_{M\times[0,1]\times[0,\varepsilon]}\equiv f_0$,
\item $\tilde{F}|_{M\times[0,\varepsilon]\times[0,1]}\equiv \tilde{F}_{01}$,
\item $\tilde{F}|_{M\times[1-\varepsilon,1]\times[0,1]}\equiv \tilde{F}_{02}$,
\item $\tilde{F}|_{M\times[0,1]\times[1-\varepsilon,1]}\equiv \tilde{F}_{12}$,
\item $F(\cdot, s, s') = \tilde{F}+C(h(s)+h(s'))$, for constant $C>0$, such that
$$\frac{\partial \tilde{F}}{\partial s}(x,s,s')+Ch'(s)<0, \forall (x,s,s')\in M\times]0,1[\times[0,1],$$ and
$$\frac{\partial \tilde{F}}{\partial s'}(x,s,s')+Ch'(s')<0, \forall (x,s,s')\in M\times[0,1]\times]0,1[.$$
\end{itemize}
We also pick a metric $G$ such that the pair $(F,G)$ is Morse-Smale, and such that it matches $g_0$ on $M\times[0,1]\times[0,\varepsilon]$, matches $G_{01}$ on $[0,\varepsilon]\times[0,1]$, matches $G_{02}$ on $[1-\varepsilon,1]\times[0,1]$, and matches $G_{12}$ on $[0,1]\times[1-\varepsilon,1]$. We call such an $(F,G)$ an interpolation square pair. This type of set up can also be found in \cite{CR, AD}. See Figure \ref{fig:F_square} for a schematic illustration of this setting. 

The only critical points of $F$ are critical points of $f_0$ over $M\times(\{(0,0)\}\cup\{(1,0)\}$, $f_1$ over $M\times\{(0,1)\}$, and $f_2$ over $M\times\{(1,1)\}$. Moreover, we get:
\begin{align*}{\rm Crit}_{\ast}(F) =&\ {\rm Crit}_{\ast-2}(f_2)\times\{1\}\times\{1\} \cup {\rm Crit}_{\ast-1}(f_1)\times\{1\}\times\{0\} \\&\ \cup {\rm Crit}_{\ast-1}(f_0)\times\{0\}\times\{1\} \cup{\rm Crit}_{\ast}(f_0)\times\{0\}\times\{0\}.\end{align*}
In particular, $F$'s only minimums correspond to minimums of $f_2$ (and $F$'s only maximums correspond to maximums of $f_0$).

\begin{figure}
\labellist
\small\hair 2pt
\pinlabel {$F_{01}$} [bl] at 75 130
\pinlabel {$F_{02}$} [bl] at 347 130
\pinlabel {$F_{12}$} [bl] at 215 260
\pinlabel {$f_0$} [bl] at 220 00
\pinlabel {$f_1$} [bl] at 100 260
\pinlabel {$f_2$} [bl] at 330 260
\endlabellist
  \centerline{\includegraphics[width=9cm]{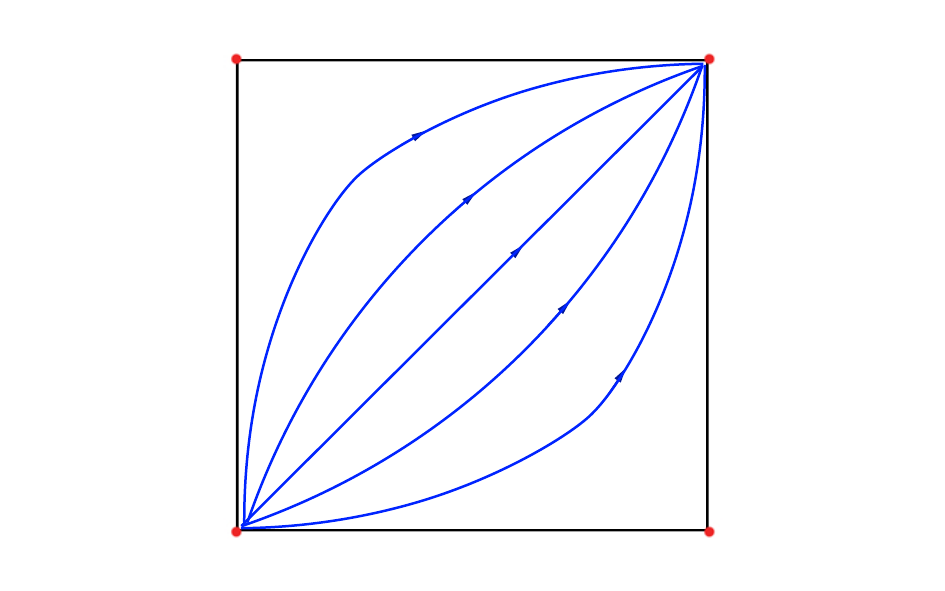}}
  \caption{Some of the anti-gradient lines of $h(s)+h(s')$ on $[0,1]\times[0,1]$.}
  \label{fig:F_square}
\end{figure} 

Looking at trajectories from critical points in $M\times\{0\}\times\{0\}$ to critical points in $M\times\{0\}\times\{1\}$ gives us $\phi_{01}$, whereas looking at trajectories from critical points in $M\times\{0\}\times\{1\}$ to $M\times\{1\}\times\{1\}$ gives us $\phi_{12}$, and looking at trajectories from critical points in $M\times\{1\}\times\{0\}$ to $M\times\{1\}\times\{1\}$ gives us $\phi_{02}$. Also, looking at trajectories in $M\times[0,1]\times\{0\}$ gives us the identity. 

We will be interested in trajectories from points of the form $(x,0,0)$ to points of the form $(x',1,1)$, where $x\in{\rm Crit}_1(f_0)$ and $x'\in{\rm Crit}_1(f_2)$. 
Call $\mathcal{M}(x)$ the space of such trajectories: \[\mathcal{M}(x) = \bigcup_{x'\in{\rm Crit}_1(f_2)} \mathcal{M}((x,0,0),(x',1,1)).\] It is a one dimensional moduli space. 
Its compactification is obtained by adding broken trajectories. 

$\mathcal{M}(x)$ appears in the boundary of a bigger moduli space $$\widetilde{\mathcal{M}}(x)=\bigcup_{y\in{\rm Crit}_0(f_2)} \mathcal{M}((x,0,0),(y,1,1)),$$ which is of dimension 2. We may parametrize this moduli space by two coordinates, $\alpha$ and $\lambda$: first, $\lambda\in]0, \pi/2[$, the angle in $[0,1]\times[0,1]$ of the projection of the tangent vector to the trajectory at $-\infty$, where $\lambda=\varepsilon$, with $\varepsilon$ small enough, corresponds to when the beginning of the trajectory is close to $M\times[0,1]\times\{0\}$ and $\lambda=\pi/2-\varepsilon$ corresponds to when the beginning of the trajectory is close to $M\times\{0\}\times[0,1]$. We call $\widetilde{\mathcal{M}}_\lambda(x)$ the moduli space of trajectories in $\widetilde{\mathcal{M}}(x)$ which begin in an angle $\lambda$. Second, $\alpha\in]0,\pi[$ is the angle in the restriction of the unstable manifold of $(x,0,0)$ to points reached by trajectories in $\widetilde{\mathcal{M}}_\lambda(x)$ of the tangent vector to the trajectory at $-\infty$. More specifically, we look at the moduli space $\widetilde{\mathcal{M}}_\lambda(x,M)$ of trajectories $\gamma$ which begin in $x$ at an angle $\lambda$ (in the same sense as before) and such that $\gamma(0)\in M\times[0,1]^2$. This moduli space is a two dimensional half disk, and every trajectory in $\widetilde{\mathcal{M}}_\lambda(x)$ is represented by a one-dimensional family of trajectories in $\widetilde{\mathcal{M}}_\lambda(x,M)$. For every trajectory in this family, the tangent vector to the trajectory at $-\infty$ is the same (up to multiplication). We then define the angle $\alpha\in]0,\pi[$ of a trajectory in $\widetilde{\mathcal{M}}_\lambda(x)$ as the angle of this vector in $\widetilde{\mathcal{M}}_\lambda(x,M)$. We make the following orientation choice: for every $x\in\text{Crit}_1(f_0)$, if we are considering a step through $x$ which begins at a broken trajectory $(\alpha_1,\beta_1)$ and ends at a broken trajectory $(\alpha_2,\beta_2)$, then $\alpha_1$ and $\alpha_2$ still exist as trajectories from $(x,0,0)$ which stay in $M\times\{0\}\times\{0\}$, and we say that for every $\lambda\in]0, \pi/2[$, trajectories in $\widetilde{\mathcal{M}}_\lambda(x)$ have an angle $\alpha$ which is close to 0 if they are near $\alpha_1$, and close to $\pi$ if they are near $\alpha_2$. Every trajectory in $\widetilde{\mathcal{M}}(x)$ is uniquely determined by $\lambda$ and $\alpha$. See Figure \ref{fig:angles}. 

\begin{figure}
\begin{subfigure}{0.5\textwidth}
\labellist
\small\hair 2pt
\pinlabel {$f_0$} [bl] at 220 30
\pinlabel {$f_1$} [bl] at 100 290
\pinlabel {$f_2$} [bl] at 330 290
\endlabellist
  \centerline{\includegraphics[width=8cm]{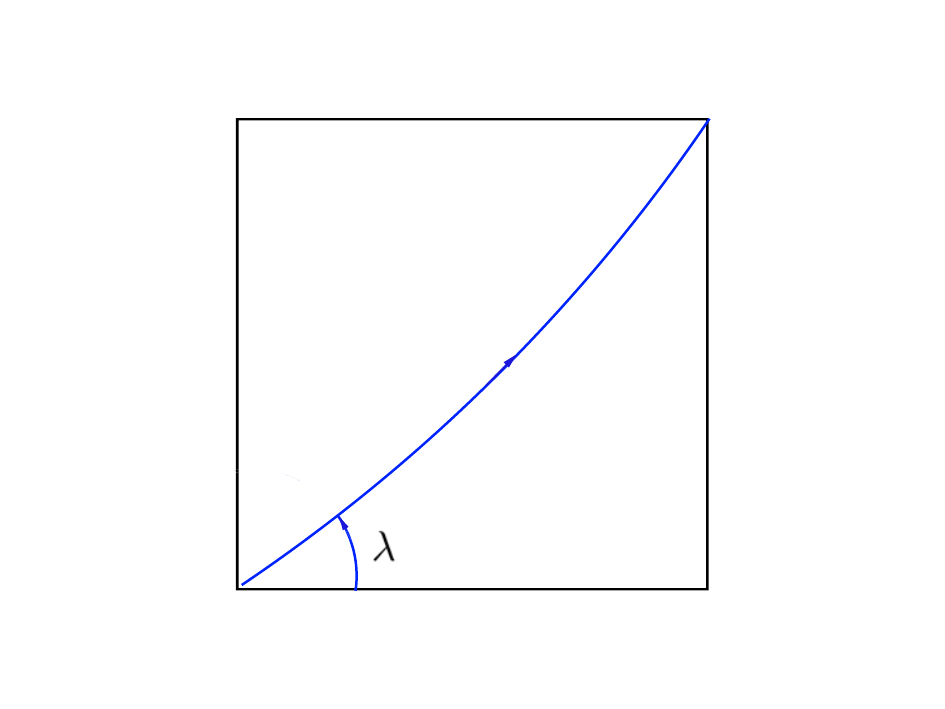}}
  \caption{The angle $\lambda$ of a trajectory.}
  \label{fig:lambda}
\end{subfigure}
\begin{subfigure}{0.5\textwidth}
\labellist
\small\hair 2pt
\pinlabel {$(x,0,0)$} [bl] at 60 185
\pinlabel {$(y_1,0,0)$} [bl] at 50 290
\pinlabel {$(y_2,0,0)$} [bl] at 50 30
\pinlabel {$(m_1,1,1)$} [bl] at 310 305
\pinlabel {$(m_2,1,1)$} [bl] at 310 20
\endlabellist
  \centerline{\includegraphics[width=8cm]{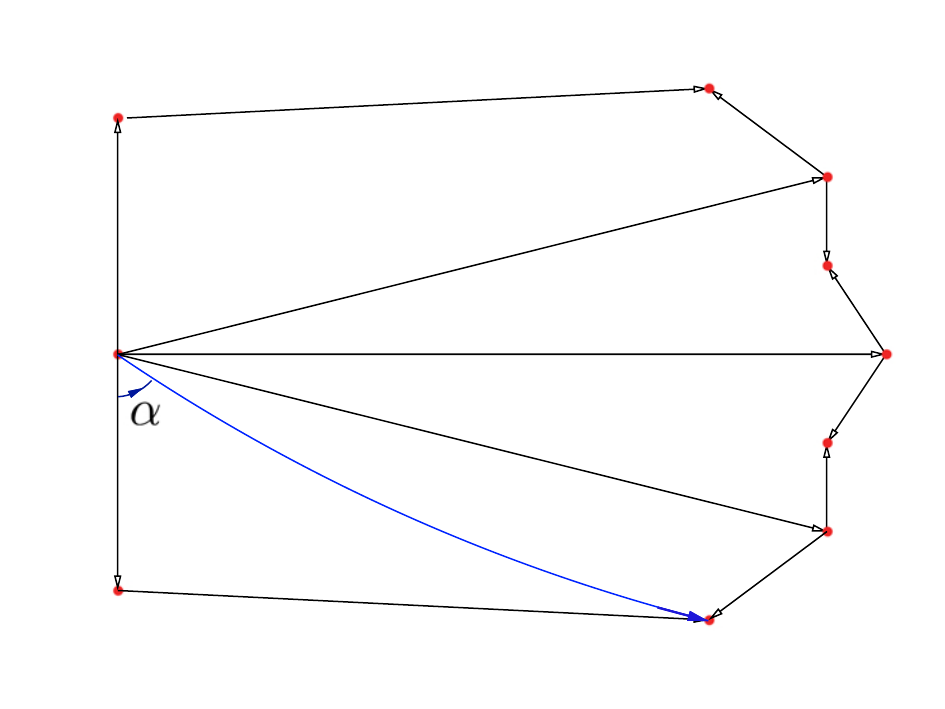}}
  \caption{The angle $\alpha$ of a trajectory at angle $\lambda$.}
  \label{fig:alpha}
\end{subfigure}
\begin{subfigure}{\textwidth}
\labellist 
\small\hair 2pt
\pinlabel {$\lambda$} [bl] at 400 60
\pinlabel {$\alpha$} [bl] at 60 300
\pinlabel {$0$} [bl] at 35 35
\pinlabel {$\pi/2$} [bl] at 375 25
\pinlabel {$\pi$} [bl] at 30 275
\endlabellist
  \centerline{\includegraphics[width=8cm]{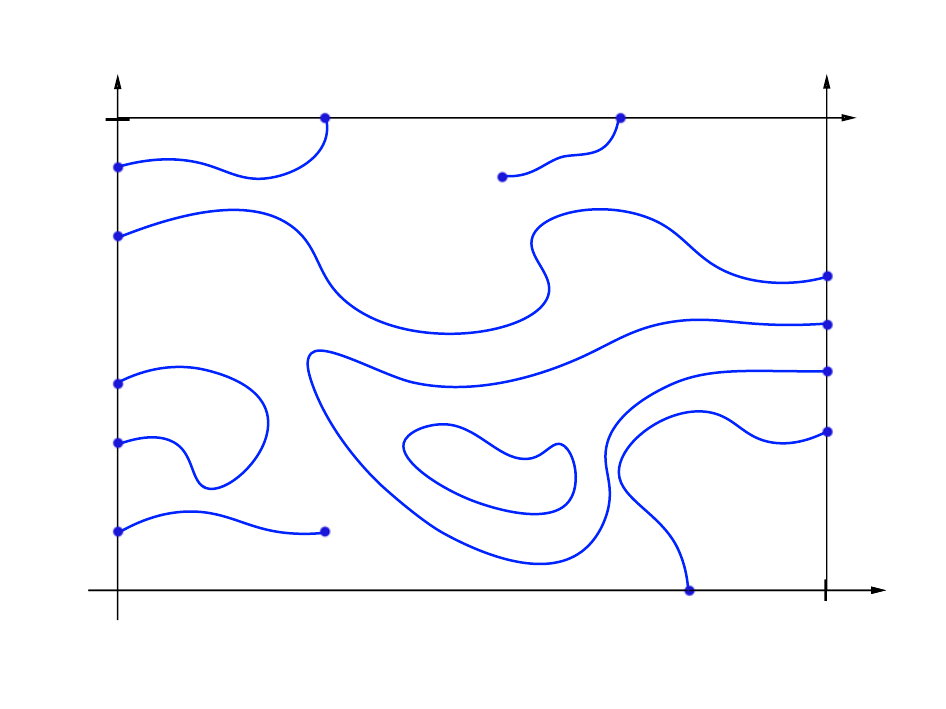}}
  \caption{$\mathcal{M}(x)$ in the boundary of $\widetilde{\mathcal{M}}(x)$, parametrized by $\lambda$ and $\alpha$.}
  \label{fig:lambda_alpha}
\end{subfigure}
\caption{The angles $\lambda$ and $\alpha$ defining elements of $\mathcal{M}(x)$.}
\label{fig:angles}
\end{figure} 

If we allow $\lambda=0,\pi/2$, and $\alpha=0,\pi$, we may retrieve all broken trajectories in the compactification of $\widetilde{\mathcal{M}}(x)$. These broken trajectories are however not uniquely determined by $\lambda$ and $\alpha$, but $\lambda$ and $\alpha$ do determine the behavior of the trajectory up to the first break. This implies that trajectories in $\mathcal{M}(x)$ are also uniquely determined by $\lambda$ and $\alpha$. We call $\mathcal{M}_\lambda(x)$ the moduli space of trajectories in $\mathcal{M}(x)$ which start at an angle $\lambda$. For every $\lambda$, associating to the step through $x$ in $M$, the concatenation of $f_2$-steps given by index 1 critical points of $f_2$ reached by trajectories in $\mathcal{M}_\lambda(x)$, gives us a map $\tilde{\phi}_\lambda$ from the space of $f_0$-steps to the space of $f_2$-paths.

\begin{prop}\label{prop:generic_lambda}
For generic values of $\lambda\in]0,\pi/2[$, $\tilde{\phi}_\lambda$ induces a well defined morphism $$\phi_\lambda: \pi_1^\text{Morse}(f_0,\ast_0) \rightarrow \pi_1^\text{Morse}(f_2,\phi_\lambda(\ast_0)),$$ where $\ast_0\in{\rm Crit}_0(f_0)$, and $(\phi_\lambda(\ast_0),1,1)$ is the other end of the unique trajectory from $(\ast_0,0,0)$ of angle $\lambda$. 
\end{prop}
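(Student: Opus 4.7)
The plan is to mimic the proof structure of Theorem \ref{thm:quotient}, viewing a fixed generic angle $\lambda \in\, ]0,\pi/2[$ as cutting out from $F$ a one-parameter family of trajectories which plays the role of an interpolation pair from $f_1$ to $f_3$. First I would show, by a standard Sard--Smale transversality argument applied to the full parametrized moduli space $\widetilde{\mathcal{M}}(x)$ (of dimension $2$, parametrized by $(\lambda,\alpha)$), that for generic $\lambda$ the slice $\widetilde{\mathcal{M}}_\lambda(x)$ is a $1$-dimensional manifold transversally cut out, and that its boundary strata as $\alpha\to 0,\pi$ are broken trajectories of the form $(\alpha_i,\gamma_i)$, where $\alpha_i$ is the $f_1$-trajectory from $x$ to an index-$0$ critical point $y_i$ of $f_1$, and $\gamma_i$ is the unique anti-gradient trajectory of $F$ starting at $(y_i,0,0)$ in direction $\lambda$ and ending at some $(m_i,1,1)$ with $m_i\in \text{Crit}_0(f_3)$. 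Between these strata, $\mathcal{M}_\lambda(x)$ is $0$-dimensional and the index-$1$ critical points of $f_3$ that it meets define the concatenation $\tilde\phi_\lambda(\sigma)$ associated to the $f_1$-step $\sigma$ through $x$.

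Next, that $\tilde\phi_\lambda$ respects concatenation follows exactly as in the argument preceding Theorem \ref{thm:quotient}: when two consecutive $f_1$-steps share an index-$0$ endpoint $y$, the uniqueness of the $\lambda$-trajectory from $(y,0,0)$ guarantees that their images under $\tilde\phi_\lambda$ are consecutive $f_3$-paths. Applied to the base point, the unique $\lambda$-trajectory from $(\ast,0,0)$ specifies a minimum $\ast'\in \text{Crit}_0(f_3)$ such that $\tilde\phi_\lambda$ sends the trivial $\ast$-based step to the trivial $\ast'$-based step, and so defines a group morphism $\tilde\phi_\lambda:\mathcal{L}(f_1,\ast)\to\mathcal{L}(f_2,\ast')$.

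To prove that $\tilde\phi_\lambda$ descends to the quotient, I would repeat the strategy of Theorem \ref{thm:quotient}. Given $z\in \text{Crit}_2(f_1)$, consider the moduli space $\mathcal{M}_{\lambda,+}((z,0,0),\varnothing)$ of anti-gradient trajectories of $F$ starting at $(z,0,0)$ at angle $\lambda$ and moving in the positive direction. For generic $\lambda$ this is a $3$-dimensional half-disk whose compactified boundary decomposes, following the recipe of Lemma \ref{lemma:boundary_1_disk}, into a copy of $W^u(z)$ inside $M\times\{(0,0)\}$, a cylindrical piece of broken trajectories at intermediate heights, and a $2$-disk $\mathcal{M}_\lambda((z,0,0), M\times\{(1,1)\})$ of trajectories breaking in $M\times\{(1,1)\}$. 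By the analog of Lemma \ref{lemma:ell_and_phi_ell}, the boundary of this $2$-disk, read through the appropriate evaluation map, is precisely $\phi_\lambda(\ell)$ where $\ell=\partial W^u(z)$. The classification argument of Theorem \ref{thm:quotient} --- decomposing the interior strata of the disk according to whether the first break occurs at an index-$0$, index-$1$, or index-$2$ critical point of $f_3$, and invoking Lemma \ref{lemma:trivial_disks} --- then shows that $\phi_\lambda(\ell)$ is trivial in $\pi_1^{\text{Morse}}(f_3,\ast')$.

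The hard part will be making precise the genericity requirement on $\lambda$: we must exclude the (expected codimension-$1$) locus of values of $\lambda$ for which some rigid trajectory in $\widetilde{\mathcal{M}}_\lambda(x)$ or in $\widetilde{\mathcal{M}}_\lambda(z)$ fails to be transversally cut out, or for which the half-disk $\mathcal{M}_{\lambda,+}((z,0,0),\varnothing)$ acquires an extra boundary degeneracy that spoils its topological structure. The standard tool is a parametric transversality argument on the universal moduli space together with Sard's theorem applied to the projection to the $\lambda$-axis, which yields a full-measure set of admissible $\lambda$. Since only finitely many critical points of $f_1$, $f_2$, $f_3$ are involved, the intersection of the admissible sets over all of them remains of full measure, and any $\lambda$ in this intersection does the job.
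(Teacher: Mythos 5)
Your proposal follows essentially the same route as the paper's proof: you first observe that for generic $\lambda$ the angle-$\lambda$ slices of the relevant moduli spaces are transversally cut out of the correct dimension, so that the unique $\lambda$-trajectory from each index-$0$ critical point of $f_1$ guarantees compatibility with concatenation, and you then rerun the argument of Theorem~\ref{thm:quotient} (via Lemmas~\ref{lemma:boundary_1_disk} and~\ref{lemma:ell_and_phi_ell}, and Lemma~\ref{lemma:trivial_disks}) with every moduli space replaced by its angle-$\lambda$ restriction. The paper states the genericity of $\lambda$ as a given (``the restriction of $(F,G)$ \ldots is Morse--Smale''), whereas you spell out the Sard--Smale/parametric-transversality justification; this is a welcome expansion rather than a different method (one small typo: the target of $\tilde\phi_\lambda$ should be $\mathcal{L}(f_3,\ast')$, not $\mathcal{L}(f_2,\ast')$).
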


\begin{proof}
Let us start by remarking that for $x\in{\rm Crit}_1(f_0)$, for generic choices of $\lambda$, the space of trajectories (modulo reparametrisation) from $(x,0,0)$ of angle $\lambda$ is a one dimensional moduli space, and if $y\in{\rm Crit}_0(f_0)$, the space of trajectories (modulo reparametrisation) from $(y,0,0)$ of angle $\lambda$ is a zero dimensional moduli space. Moreover, there is a unique trajectory from $(y,0,0)$ of angle $\lambda$, and a finite amount of trajectories from points of the form $(x,0,0)\in{\rm Crit}_3(F)$ to points of the form $(x',1,1)\in{\rm Crit}_1(F)$ of angle $\lambda$. This tells us that the image of the Morse step through $x$ by $\phi_\lambda$ is a $f_2$-step, and that this map is compatible with concatenation. We therefore get a morphism $\phi_\lambda: \mathcal{L}(f_0,\ast) \rightarrow \pi_1^\text{Morse}(f_2,\phi_\lambda(\ast)).$

Since for generic values of $\lambda$, the restriction of $(F,G)$ to the part of $M\times[0,1]\times[0,1]$ where the angle is $\lambda$ is Morse-Smale, and the proof that this morphism descends to the quotient is analogous to that of Theorem \ref{thm:quotient}, with the only change being that we replace every moduli space involved in the proof with one of trajectories starting at angle $\lambda$. \end{proof}

The case where trajectories break in points $(x'', 1, 0)$, where $x''\in{\rm Crit}_1(f_0)$, corresponds to the case where $\lambda = \pi/2$, and so $\phi_{\pi/2}=\phi_{12}\circ\phi_{01}$. The case where trajectories break in $(x'', 0, 1)$, where $x''\in{\rm Crit}_1(f_1)$, corresponds to $\lambda=0$, and so $\phi_{0}=\phi_{02}\circ id=\phi_{02}$. We want to understand how $\phi_\lambda$ changes as $\lambda$ goes from $0$ to $\pi/2$. We will prove the following statement: 

\begin{prop}\label{prop:diag_lambda}
For any non-generic value $\lambda\in]0,\pi/2[$, and $\ast_0\in\text{Crit}_0(f_0)$, there exists a small enough $\varepsilon$ and an isomorphism $\psi_{\lambda}$ such that the following diagram is commutative:
\[\begin{tikzcd}[column sep=5em, row sep=3em]
\pi^{\text{Morse}}_1(f_0,\ast_0) \arrow[r,"\phi_{\lambda-\varepsilon}"] \arrow[rd, "\phi_{\lambda+\varepsilon}"']
& \pi^{\text{Morse}}_1(f_2,\phi_{\lambda-\varepsilon}(\ast_0)) \arrow[d,"\psi_\lambda"] \\
&  \pi^{\text{Morse}}_1(f_2,\phi_{\lambda+\varepsilon}(\ast_0)).
\end{tikzcd}\]
More precisely, $\psi_{\lambda}$ is the conjugation by the $f_2$-Morse path $\sigma_\lambda$ from $\phi_-(\ast_0)$ to $\phi_{+}(\ast_0)$ which appears in the boundary of the unstable manifold of $(\ast_0,0,0)$ between $\lambda-\varepsilon$ and $\lambda+\varepsilon$, ie $$\psi_\lambda(\ell_-)=\sigma_\lambda \ell_- \sigma_\lambda^{-1},$$ where $\ell_-$ is a $\phi_-(\ast_0)$ based Morse loop.
\end{prop}
\begin{proof}There are two types of non generic points, which we can see in Figure \ref{fig:lambda_alpha}: vertical tangencies, which correspond to birth/deaths of pairs of trajectories, and breaking points, which correspond to when the trajectory at an angle goes to an index 2 critical point of $F$, either in $M\times\{0\}\times\{0\}$ or in $M\times\{1\}\times\{1\}$. 

First, let's look at what happens at vertical tangencies. Suppose that we have a one parameter family of trajectories in $\mathcal{M}(x)$ which admits a vertical tangency at $\lambda$. At $\lambda$, there is either a death or a birth of a trajectory that then splits into two. Suppose we are in the case of a death. Then at $\lambda-\varepsilon$, there exist two trajectories from $(x,0,0)$ to $(x',1,1)$. These trajectories then merge at $\lambda$, and cease to exist at $\lambda+\varepsilon$. In particular, the fact that they merge tells us that they each give Morse steps which are part of the same connected component of $\mathcal{M}((x,0,0),(x',1,1))$. So these steps must be successive Morse steps with opposite orientations: one contributes the step $\sigma_{x'}$ to the image of $\sigma_x$, whereas the other contributes $\sigma_{x'}^{-1}$. We therefore get $$\tilde{\phi}_{\lambda-\varepsilon}(\sigma_x)=\sigma_{x_1}…\sigma_{x_i}\sigma_{x'}\sigma_{x'}^{-1}\sigma_{x_{i+1}}…\sigma_{x_n},$$ then $$\tilde{\phi}_{\lambda+\varepsilon}(\sigma_x)=\sigma_{x_1}…\sigma_{x_i}\sigma_{x_{i+1}}…\sigma_{x_n}.$$ With the word reduction rule of $\mathcal{L}(f_2,\ast)$, these therefore define the same Morse loops. By symmetry, the case of a birth takes us from $$\tilde{\phi}_{\lambda-\varepsilon}(\sigma_x)=\sigma_{x_1}…\sigma_{x_i}\sigma_{x_{i+1}}…\sigma_{x_n}$$ to $$\tilde{\phi}_{\lambda+\varepsilon}(\sigma_x)=\sigma_{x_1}…\sigma_{x_i}\sigma_{x'}\sigma_{x'}^{-1}\sigma_{x_{i+1}}…\sigma_{x_n}.$$ 

Now let's look at breaking points, which, other than the ones previously mentioned, come in two types: a trajectory in $\mathcal{M}_\lambda(x)$ can break in $(y,0,0)$, with $y\in {\rm Crit}_0(f_0)$, or can break in a point $(z,1,1)$, with $z\in{\rm Crit}_2(f_2)$. 

The case where the breaking happens in $(z,1,1)$, with $z\in{\rm Crit}_2(f_2)$ corresponds to the $\lambda\in]0,\pi/2[, \alpha\in]0,\pi[$ case. It is depicted in Figure \ref{fig:break_z}.  Let us look at a family of generic trajectories which goes from $(x,0,0)$ to $(x',1,1)$ before $\lambda$ and breaks in $(z,1,1)$ in $\lambda$. Let us compare what happens at $\lambda-\varepsilon$ and $\lambda+\varepsilon$ for some $\varepsilon>0$. Let $\sigma_x$ be the Morse-step formed by the Latour cell of $x\in M$. Suppose $$\tilde{\phi}_{\lambda-\varepsilon}(\sigma_x)=\sigma_{x_1}…\sigma_{x_n},$$ for $x_1, …, x_n \in {\rm Crit}_1(f_2)$. Suppose $x_i,…, x_{i+j}$ are index 1 critical points of $f_1$ reached by trajectories starting at $z$  (see Figure \ref{fig:break_z_1}). Then at $\lambda$, the trajectories to $x_i, …, x_{i+j}$ break in $z$. We get a unique trajectory from $(x,0,0)$ to $(z,1,1)$, then $j+1$ rigid trajectories from $(z,1,1)$. In fact, every trajectory from $z$ gets glued to the trajectory from $(x,0,0)$ to $(z,1,1)$, and so at $\lambda$ we get broken trajectories from $(x,0,0)$ to each index 1 critical point which appears in the boundary of the unstable manifold of $z$ (see Figure \ref{fig:break_z_2}). These broken trajectories are therefore each either the end of a family of trajectories from $(x,0,0)$ to these points which exists before $\lambda$ and ceases to exist after $\lambda$, in which case the steps through these points are in the image of $\sigma_x$ by $\phi_{\lambda-\varepsilon}$, or the beginning of a family of trajectories from $(x,0,0)$ to these points which does not exist before $\lambda$ and starts to exist after $\lambda$, in which case they are in the image of $\sigma_x$ by $\phi_{\lambda+\varepsilon}$. In particular, $\sigma_{x_i}…\sigma_{x_{i+j}}$ are no longer in the image of $\sigma_x$ by $\phi_{\lambda+\varepsilon}$. Let us call $m_1$ (resp. $m_k$) the minimum of $f_1$ which is the beginning of the Morse path $\sigma_{x_i}…\sigma_{x_{i+j}}$. Then the moduli spaces $$\mathcal{M}((x,0,0),(z,1,1))\times\overline{\mathcal{M}}((z,1,1),(m_1,1,1))$$ and $$\mathcal{M}((x,0,0),(z,1,1))\times\overline{\mathcal{M}}((z,1,1),(m_k,1,1))$$ are two compact one-dimensional manifolds with boundary which together separate the boundary of the moduli space $$\mathcal{M}((x,0,0),(z,1,1))\times\overline{\mathcal{M}}((z,0,0),M\times\{1\}\times\{1\})$$ into two connected components (since this moduli space is a disk and therefore has for boundary $\mathbb{S}^1$). Then since at $\lambda-\varepsilon$ and $\lambda+\varepsilon$, the boundary part $\overline{\mathcal{M}}_\lambda((x,0,0),M\times\{1\}\times\{1\})$ forms a Morse path, at either angle, one either goes around one side of the Latour cell of $(z,0,0)$ or the other. Therefore, $\sigma_{x_i},…, \sigma_{x_{i+j}}$ are all the steps along one of these sides of the Latour cell of $(z,0,0)$. Let $\tilde{x}_1, …, \tilde{x}_m$ be the other index 1 points on the other side of the boundary of the unstable manifold of $z$ (in order of appearance of the associated steps in the boundary of the Latour cell of $z$). We get $$\tilde{\phi}_{\lambda+\varepsilon}(\sigma_x)= \sigma_{x_1}…\sigma_{x_{i-1}}\sigma_{\tilde{x}_1}…\sigma_{\tilde{x}_m}\sigma_{x_{i+j+1}}…\sigma_{x_n}.$$ See Figure \ref{fig:break_z_3}. If $r$ is the concatenation of steps which defines the relation given by $z$, then $r=(\sigma_{x_i}…\sigma_{x_{i+j}})^{-1}\sigma_{\tilde{x}_1}…\sigma_{\tilde{x}_m}$, and therefore, $$\tilde{\phi}_{\lambda+\varepsilon}(\sigma_x)= \sigma_{x_1}…\sigma_{x_{i-1}}\sigma_{x_i}…\sigma_{x_{i+j}}r\sigma_{x_{i+j+1}}…\sigma_{x_n},$$ and so once we pass to the quotient, this step creates the same loops as the step $\phi_{\lambda-\varepsilon}(\sigma_x)$.

\begin{figure}
\begin{subfigure}{\textwidth}
\labellist 
\small\hair 2pt
\pinlabel {$x$} [bl] at 65 110
\pinlabel {$y_1$} [bl] at 65 35
\pinlabel {$y_2$} [bl] at 65 180
\pinlabel {$x_1$} [bl] at 335 125
\pinlabel {$x_2$} [bl] at 350 225
\pinlabel {$x_3$} [bl] at 415 190
\pinlabel {$x_4$} [bl] at 425 145
\pinlabel {$x_5$} [bl] at 415 100
\pinlabel {$x_6$} [bl] at 365 75
\pinlabel {$z$} [bl] at 383 177
\endlabellist
  \centerline{\includegraphics[width=9cm]{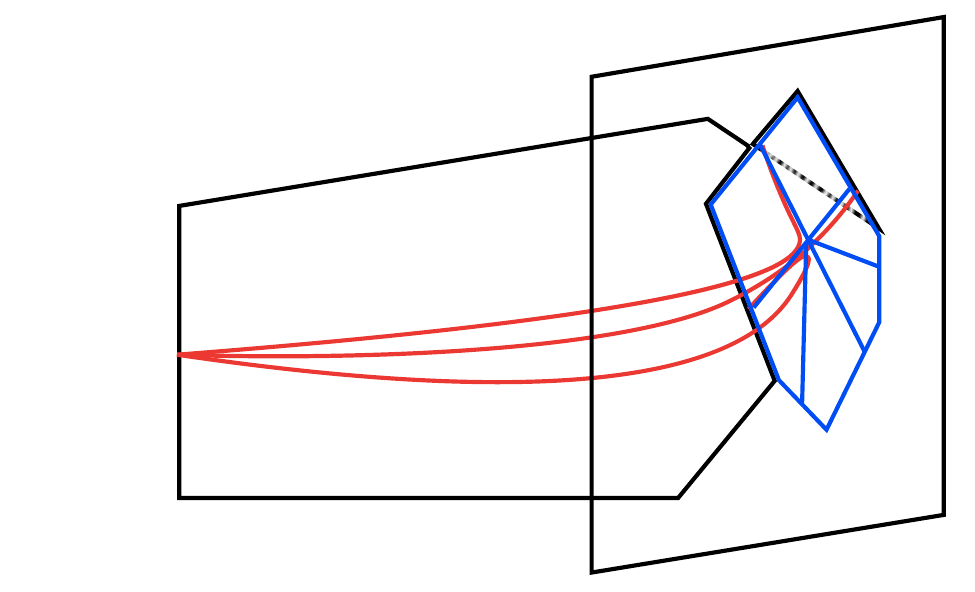}}
  \caption{Trajectories from $(x,0,0)$ at $\lambda-\varepsilon$.}
  \label{fig:break_z_1}
\end{subfigure}
\begin{subfigure}{\textwidth}
\labellist 
\small\hair 2pt
\pinlabel {$x$} [bl] at 65 110
\pinlabel {$y_1$} [bl] at 65 35
\pinlabel {$y_2$} [bl] at 65 180
\pinlabel {$x_1$} [bl] at 335 125
\pinlabel {$x_2$} [bl] at 350 225
\pinlabel {$x_3$} [bl] at 415 190
\pinlabel {$x_4$} [bl] at 425 145
\pinlabel {$x_5$} [bl] at 415 100
\pinlabel {$x_6$} [bl] at 365 75
\pinlabel {$z$} [bl] at 383 177
\endlabellist
  \centerline{\includegraphics[width=9cm]{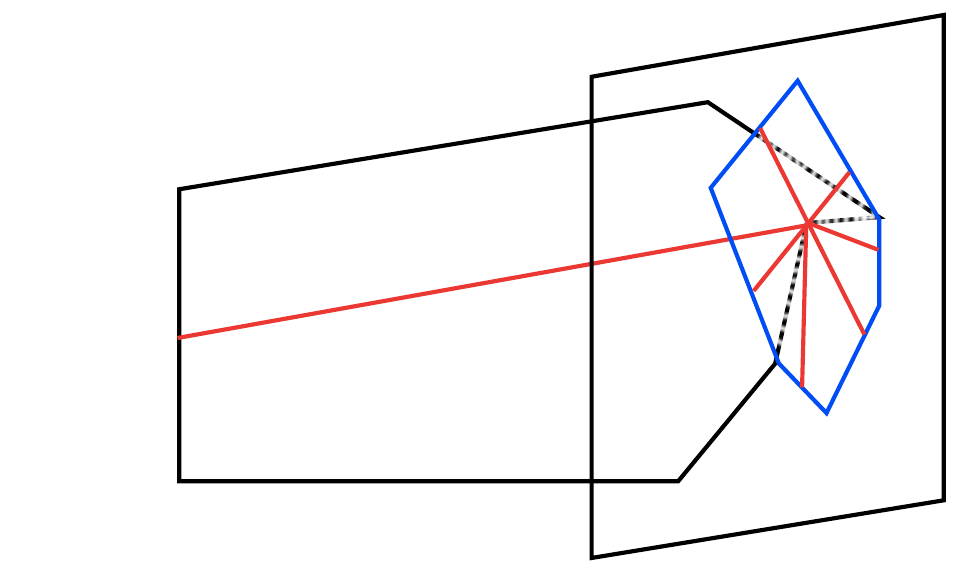}}
  \caption{Broken trajectories from $(x,0,0)$ at $\lambda$.}
  \label{fig:break_z_2}
\end{subfigure}
\labellist 
\small\hair 2pt
\pinlabel {$x$} [bl] at 65 110
\pinlabel {$y_1$} [bl] at 65 35
\pinlabel {$y_2$} [bl] at 65 180
\pinlabel {$x_1$} [bl] at 335 125
\pinlabel {$x_2$} [bl] at 350 225
\pinlabel {$x_3$} [bl] at 415 190
\pinlabel {$x_4$} [bl] at 425 145
\pinlabel {$x_5$} [bl] at 415 100
\pinlabel {$x_6$} [bl] at 365 75
\pinlabel {$z$} [bl] at 383 177
\endlabellist
\begin{subfigure}{\textwidth}
  \centerline{\includegraphics[width=9cm]{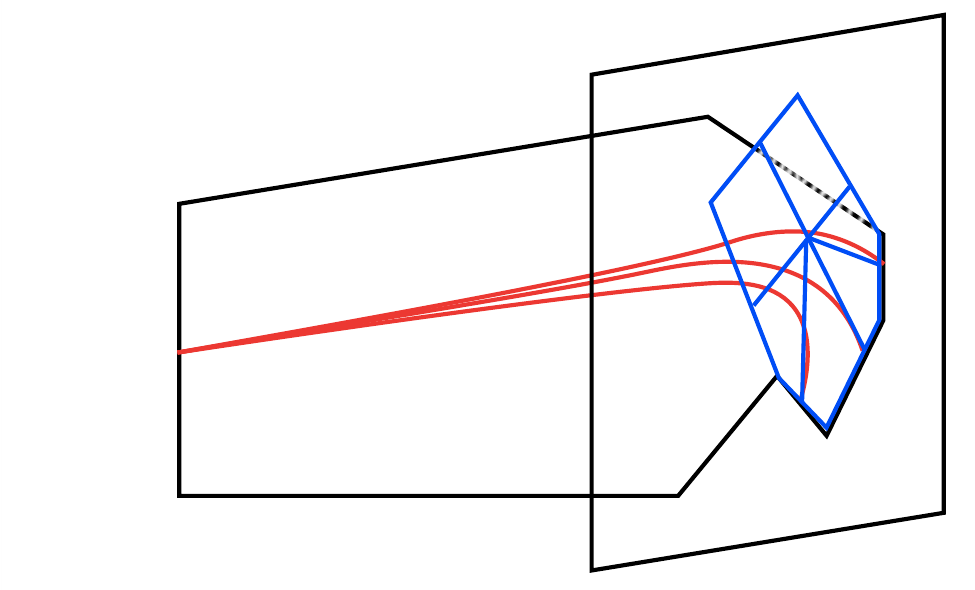}}
  \caption{Trajectories from $(x,0,0)$ at $\lambda+\varepsilon$.}
  \label{fig:break_z_3}
\end{subfigure}
\caption{A family of trajectories which breaks in $\lambda$, with $\lambda\in]0,\pi/2[,\alpha\in]0,\pi[$.}
\label{fig:break_z}
\end{figure} 

\begin{figure}
\begin{subfigure}{\textwidth}
\labellist 
\small\hair 2pt
\pinlabel {$x$} [bl] at 35 160
\pinlabel {$y_1$} [bl] at 35 35
\pinlabel {$y_2$} [bl] at 30 230
\pinlabel {$x'$} [bl] at 365 125
\pinlabel {$x_2$} [bl] at 350 195
\pinlabel {$y'$} [bl] at 385 65
\pinlabel {$y''$} [bl] at 398 160
\endlabellist
  \centerline{\includegraphics[width=7cm]{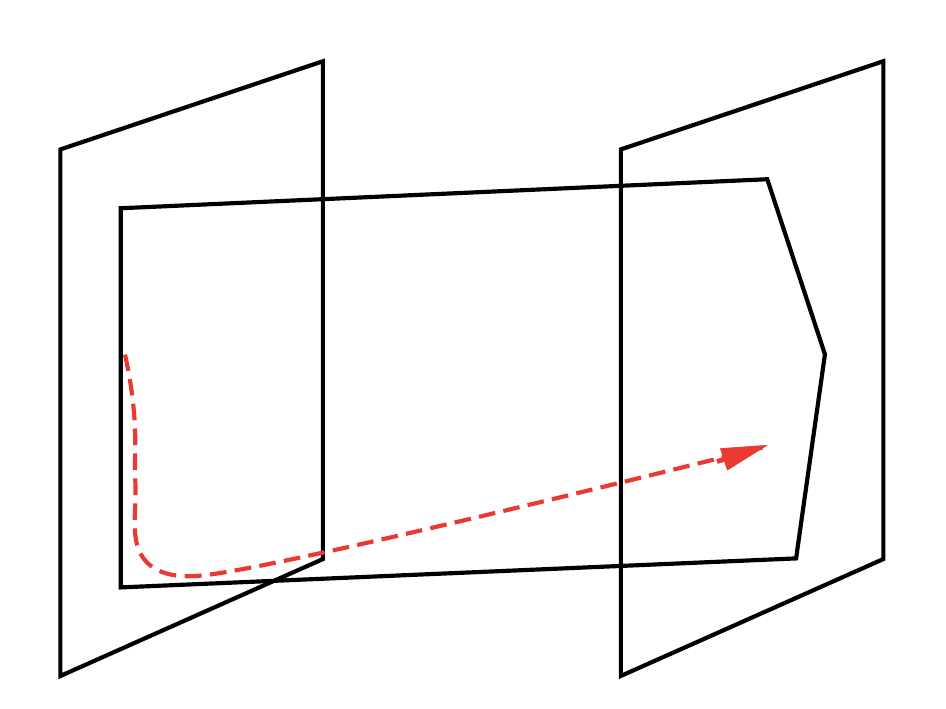}}
  \caption{A trajectory from $(x,0,0)$ at $\lambda-\varepsilon$.}
  \label{fig:break_y_1}
\end{subfigure}
\begin{subfigure}{\textwidth}
\labellist 
\small\hair 2pt
\pinlabel {$x$} [bl] at 35 160
\pinlabel {$y_1$} [bl] at 35 35
\pinlabel {$y_2$} [bl] at 30 230
\pinlabel {$x'$} [bl] at 365 125
\pinlabel {$x_2$} [bl] at 350 195
\pinlabel {$y'$} [bl] at 385 65
\pinlabel {$y''$} [bl] at 398 160
\endlabellist
  \centerline{\includegraphics[width=7cm]{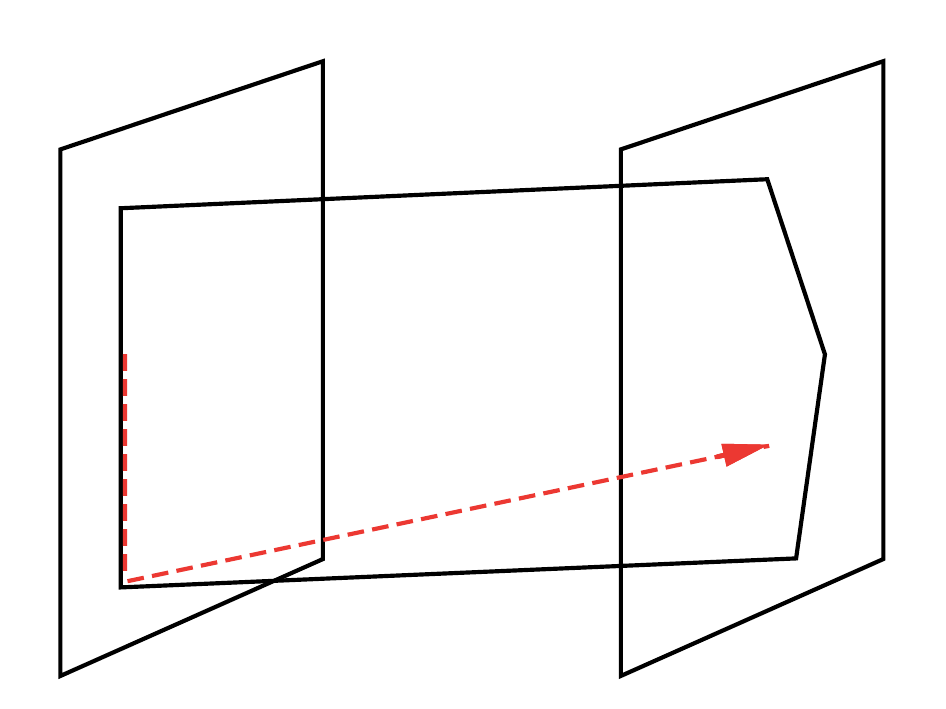}}
  \caption{A trajectory from $(x,0,0)$ at $\lambda$.}
  \label{fig:break_y_2}
\end{subfigure}
\begin{subfigure}{\textwidth}
\labellist 
\small\hair 2pt
\pinlabel {$x$} [bl] at 35 160
\pinlabel {$y_1$} [bl] at 35 35
\pinlabel {$y_2$} [bl] at 30 230
\pinlabel {$x_2$} [bl] at 350 210
\pinlabel {$y''$} [bl] at 398 160
\endlabellist
  \centerline{\includegraphics[width=7cm]{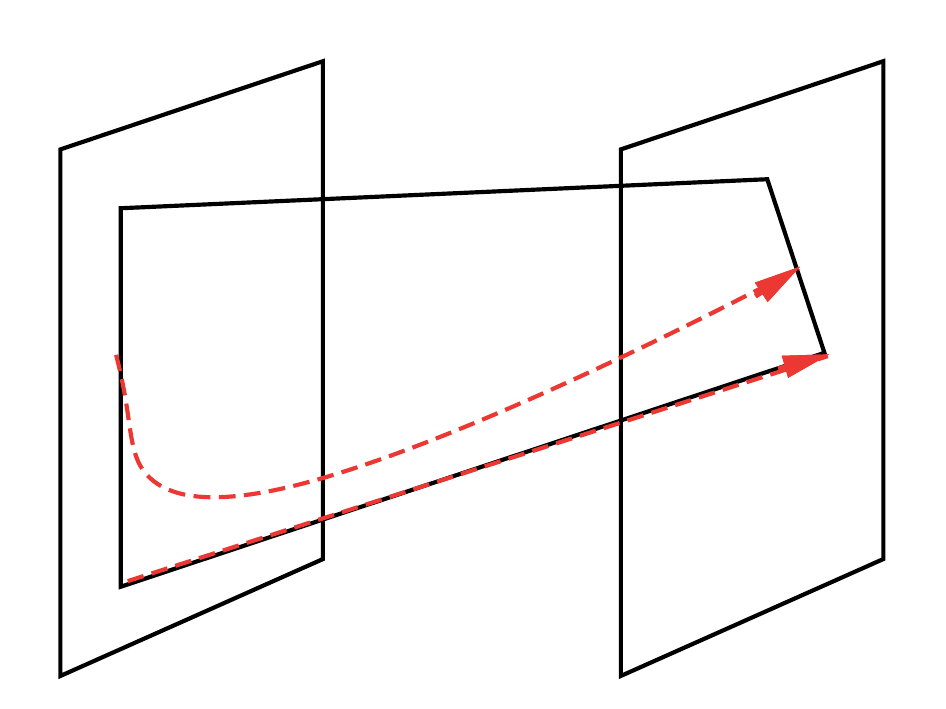}}
  \caption{A trajectory from $(x,0,0)$ at $\lambda+\varepsilon$.}
  \label{fig:break_y_3}
\end{subfigure}
\caption{A family of trajectories which breaks in $\lambda$, with $\lambda\in]0,\pi/2[,\alpha\in\{0,\pi\}$.}
\label{fig:break_y}
\end{figure} 

The case where the breaking happens in a point $(y,0,0)$, with $y\in{\rm Crit}_0(f_0)$, corresponds to the case where $\lambda\in]0,\pi/2[,\alpha\in\{0,\pi\}$. Consider a one-parameter family of trajectories from $(x,0,0)$ to $(x',1,1)$ which breaks in $(y,0,0)$ at $\lambda$. In particular, since there are only two such possible $y$'s, $y_1$ for $\alpha=0$ and $y_2$ for $\alpha=\pi$, then if $\alpha=0$, we get $$\tilde{\phi}_{\lambda-\varepsilon}(\sigma_x)=\sigma_{x'}\sigma_{x_2}…\sigma_{x_n}$$ (so $x'=x_1$), and for $\alpha=\pi$, we get $$\tilde{\phi}_{\lambda-\varepsilon}(\sigma_x)=\sigma_{x_1}…\sigma_{x_{n-1}}\sigma_{x'}$$ ($x'=x_n$) (see Figure \ref{fig:break_y_1}). Let us treat the case where $\alpha=0$, the other case being similar. The family of trajectories from $(x,0,0)$ to $(x',1,1)$ therefore breaks at $\lambda$ in $(y_1,0,0)$ (see Figure \ref{fig:break_y_2}). Since $(y_1,0,0)$ is of index 2, there is a unique trajectory at $\lambda-\varepsilon$ which goes from $(y_1,0,0)$ to an index 0 critical point $(y',1,1)$. At $\lambda$ however, we get a trajectory from $(y_1,0,0)$ to $(x',1,1)$. If $y''$ is the other extremity of $\sigma_{x'}$, we can glue the trajectory from $(y_1,0,0)$ to $(x',1,1)$ and the trajectory from $(x',1,1)$ to $(y'',1,1)$, which begins a one-parameter family (which depends on the angle) of trajectories from $(y_1,0,0)$ to $(y'',1,1)$. This family only begins after $\lambda$, because if such trajectories exist before $\lambda$ they have to cross the trajectory from $(x,0,0)$ to $(x',1,1)$. Therefore, at $\lambda+\varepsilon$, we can no longer have a trajectory from $(x,0,0)$ to $(x',1,1)$, because it would cross the trajectories from $(y_1,0,0)$ to $(y'',1,1)$. The trajectory from $(x,0,0)$ to $(x_2,1,1)$ nevertheless remains, and we get $$\tilde{\phi}_{\lambda+\varepsilon}(\sigma_x)=\sigma_{x_2}…\sigma_{x_n}$$ (see Figure \ref{fig:break_y_3}). (Similarly, in the case where $\alpha=\pi$, we get $\tilde{\phi}_{\lambda+\varepsilon}(\sigma_x)=\sigma_{x_1}…\sigma_{x_{n-1}}$). 

Crossing such a $\lambda$ therefore changes $\phi_{\lambda}$ at the Morse step level, but not at the Morse loop level: let $\tilde{x}\in{\rm Crit}_1(f_0)$ that also has $y_1\in\text{Crit}_0(f_0)$ in the boundary of its unstable manifold. $\sigma_{\tilde{x}}$ therefore precedes $\sigma_x$ in the concatenation which forms Morse loops for $f_0$. At $\lambda-\varepsilon$, there is no trajectory from $(\tilde{x},0,0)$ to $(x',1,1)$, because this would require crossing the trajectories from $(y_1,0,0)$ to $(y'',1,1)$. However, at $\lambda$, the trajectory from $(y_1,0,0)$ to $(x',1,1)$ glues with the trajectory from $(\tilde{x},0,0)$ to $(y_1,0,0)$, which begins a one-parameter family of trajectories from $(\tilde{x},0,0)$ to $(x',1,1)$. We therefore go from $$\tilde{\phi}_{\lambda-\varepsilon}(\sigma_x)=\sigma_{\tilde{x}_1}…\sigma_{\tilde{x}_n}$$ to $$\tilde{\phi}_{\lambda+\varepsilon}(\sigma_x)=\sigma_{\tilde{x}_1}…\sigma_{\tilde{x}_n}\sigma_{x'},$$ or, in the $\alpha=\pi$ case, from $$\tilde{\phi}_{\lambda-\varepsilon}(\sigma_x)=\sigma_{\tilde{x}_1}…\sigma_{\tilde{x}_n}$$ to $$\tilde{\phi}_{\lambda+\varepsilon}(\sigma_x)=\sigma_{x'}\sigma_{\tilde{x}_1}…\sigma_{\tilde{x}_n}.$$ Therefore, if this phenomenon happens inside a Morse path, there is no change. The only case where this induces a change of the path is when this phenomenon happens at extremities of the path, that is if it is the trajectory to $(\ast_{-},1,1)$ that breaks. Suppose we have chosen an orientation so that the Morse loop starts at $\alpha=\pi$. Then this means that the step $\sigma_{\lambda}$ through $x'$, which goes from from $\ast_{+}$ to $\ast_{-}$, is added to the front of the Morse loop at $\lambda+\varepsilon$. On the other end of the loop, which corresponds to $\alpha=0$, by periodicity, we get an added step $\sigma_{\lambda}^{-1}$ at the end of the loop. 

Therefore, if $\ell$ is an $\ast$-based loop, $\ell_-$ is its image by $\phi_{\lambda-\varepsilon}$, and $\ell_+$ is its image by $\phi_{\lambda+\varepsilon}$, then $\ell_+ = \sigma_\lambda \ell_- \sigma_\lambda^{-1}$. \end{proof}

We will now prove the following theorem:
\begin{theorem}\label{thm:diag}
Let $(f_0,g_0), (f_1,g_1), (f_2,g_2)$ be three Morse-Smale pairs over $M$, $(F_{01}, G_{01})$ an interpolation pair between $(f_0,g_0)$ and $(f_1,g_1)$,  $(F_{02},G_{02})$ an interpolation function between $(f_0,g_0)$ and $(f_2,g_2)$, $(F_{12},G_{12})$ an interpolation function between $(f_1,g_1)$ and $(f_2,g_2)$, which induce morphisms $\phi_{ij}: \pi_1^\text{Morse}(f_i,\ast_j)\rightarrow \pi_1^\text{Morse}(f_j,\phi_{ij}(\ast_i))$, $i<j$, between their Morse fundamental groups. Then there exists an isomorphism $\psi$ such that the following diagram is commutative: 

\[\begin{tikzcd}[column sep=4em, row sep=3em]
\pi_1^\text{Morse}(f_0,\ast_0) \arrow[r, "\phi_{02}"] \arrow[d, "\phi_{01}"]
& \pi_1^\text{Morse}(f_2,\phi_{02}(\ast_0)) \arrow[d, "\psi"', "\simeq"] \\
\pi_1^\text{Morse}(f_1,\phi_{01}(\ast_0))  \arrow[r, "\phi_{12}"]
& \pi_1^\text{Morse}(f_2,\phi_{12}(\phi_{01}(\ast_0))).
\end{tikzcd}\]
\end{theorem}

\begin{proof}
First note that the moduli spaces of trajectories at angle $\lambda$ do not change unless crossing a singular value of $\lambda$. In view of the previous proposition, and the observation that $\phi_0=\phi_{02}$ and $\phi_{\pi/2}=\phi_{12}\circ\phi_{01}$, we must just show that, for $\varepsilon>0$, $\phi_{0}=\phi_{\varepsilon}$, and $\phi_{\pi/2-\varepsilon}=\phi_{\pi/2}$. This is simply a consequence of the fact that arriving in $\pi/2$ (resp. 0) corresponds to breaking in an index 2 critical point of $F$ in $M\times\{0\}\times\{1\}$ (resp. $M\times\{1\}\times\{0\}$). 

The previous proposition therefore readily gives us $\psi$, which is the conjugation by the $f_2$-Morse-path from $\phi_{02}(\ast_0)$ to $\phi_{12}(\phi_{01}(\ast_0))$ in the boundary of the unstable manifold of $(\ast_0,0,0)$ (see Figure \ref{fig:wu_ast_1}). \end{proof}

\begin{figure}
\labellist
\small\hair 2pt
\pinlabel {$\ast_0$} [bl] at 7 11
\pinlabel {$\phi_{01}(\ast_0)$} [bl] at -11 85
\pinlabel {$\ast_0$} [bl] at 115 11
\pinlabel {$\phi_{02}(\ast_0)$} [bl] at 130 76
\pinlabel {$\phi_{12}(\phi_{01}(\ast_0))$} [bl] at 85 99
\pinlabel {$y$} [bl] at 124 95
\pinlabel {$x_1$} [bl] at 100 60
\pinlabel {$x_2$} [bl] at 69 78
\endlabellist
  \centerline{\includegraphics[width=7cm]{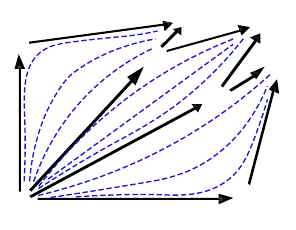}}
  \caption{The unstable manifold of $(\ast_0,0,0)$.}
  \label{fig:wu_ast_1}
\end{figure} 

\begin{cor}\label{cor:iso_morse} Let $(f_0,g_0), (f_1,g_1)$ be two Morse-Smale pairs over $M$. Let $(F_{01},G_{01})$ and $(F_{10},G_{01})$ be interpolation pairs which induce morphisms $$\phi_{01}: \pi_1^\text{Morse}(f_0,\ast_0)\rightarrow \pi_1^\text{Morse}(f_1,\phi_{01}(\ast_0))$$ and $$\phi_{10}:\pi_1^\text{Morse}(f_1,\ast_1)\rightarrow \pi_1^\text{Morse}(f_0,\phi_{10}(\ast_1))$$ between their Morse fundamental groups, for $\ast_0\in\text{Crit}_0(f_0), \ast_1\in\text{Crit}_0(f_1)$.  Then $\phi_{01}\circ\phi_{10}$ and $\phi_{10}\circ\phi_{01}$ are isomorphisms. In particular, $\phi_{01}$ and $\phi_{21}$ are isomorphisms. 
\end{cor}
\begin{proof} This follows directly by taking $f_2=f_0$, $\phi_{12}=\phi_{10}$ and $\phi_{02}=id$ in Theorem \ref{thm:diag}. \end{proof}

\subsection{The grafted case}\label{subsection:grafted_functoriality}

We can once again do this construction in the case where $f_0$, $f_1$, and $f_2$ are Morse functions over smooth closed manifolds $M_0$, $M_1$, and $M_2$. Suppose that there exist homotopies of differentiable maps $H^\tau_{01}:M_0\rightarrow M_1$ and $H^\tau_{12}:M_1\rightarrow M_2$, $\tau\in[0,1]$, such that $$(\text{Crit}(f_0)\times\text{Crit}(f_1))\cap\Gamma(H^\tau_{01})= (\text{Crit}(f_1)\times\text{Crit}(f_2))\cap\Gamma(H^\tau_{12})=\varnothing, \forall\tau\in[0,1].$$ We will also want to assume that $(\text{Crit}(f_0)\times\text{Crit}(f_2))\cap\Gamma(H^1_{12}\circ H^1_{01})=\varnothing$. We may then consider sets of grafted interpolation data $(f_0,f_1,H^\tau_{01})$ between $f_0$ and $f_1$, $(f_1, f_2,H^\tau_{12})$ between $f_1$ and $f_2$, and $(f_0,f_2,H^0_{12}\circ H^0_{01})$ between $f_0$ and $f_2$. Let $\tilde{K}_i: M_i\times[0,1]\times[0,1]\rightarrow\mathbb{R}$ be identically equal to $f_i$, and set $K_i =\tilde{K}_i  + h(s)+h(s')$, for $(s,s')\in [0,1]^2$. We assume that it is Morse-Smale relatively to $g_i$ and the standard Riemannian metric on $[0,1]\times[0,1]$. 

We will look at the moduli space $\mathcal{M}_{\rm gr^2}(x,y)$ of twice grafted trajectories between critical points $x$ of $K_0$ and critical points $y$ of $K_2$, that is triplets $(\gamma_0, \gamma_1, \gamma_2)$ of trajectories:
\begin{align*} \gamma_{0}: ]-\infty, 0]&\rightarrow M_0\times[0,1]\times[0,1],\ \gamma_{1}'(t) = -\nabla K_1(\gamma_{1}(t)), \\
\gamma_{1}:  [0,u]&\rightarrow M_1\times[0,1]\times[0,1]\ \gamma_{2}'(t) = -\nabla K_2(\gamma_{2}(t)),\\
\gamma_{2}: [u,+\infty[&\rightarrow M_2\times[0,1]\times[0,1],\ \gamma_{3}'(t) = -\nabla K_3(\gamma_{3}(t)),\end{align*}
 and such that 
\begin{gather*} \lim_{t\rightarrow-\infty}\gamma_{0}(t)\in{\rm Crit}(K_1),\\
\lim_{t\rightarrow+\infty}\gamma_{2}(t)\in{\rm Crit}(K_3),\\
\gamma_0(0)\in M_0\times[0,1]\times\{1/2\}, \gamma_1(0)=(H^{s_{0}}_{12}(m_{0}),s_{0},1/2),\end{gather*} where $\gamma_0(0)=(m_{0},s_{0},1/2)$, 
and where $u$ is such that \[\gamma_{1}(u) \in M_1\times\{s+s'=3/2\},\] and \[\gamma_2(u)=(H^{2s_u-1}_{23}(m_u),s_u,s'_u),\] where $\gamma_1(u)=(m_u,s_u,s'_u)$.
These flow lines are depicted in Figure \ref{fig:grafted_square}.

\begin{figure}
\labellist
\small\hair 2pt
\pinlabel {$F_{0}$} [bl] at 25 85
\pinlabel {$F_{0}$} [bl] at 290 85
\pinlabel {$F_{1}$} [bl] at 25 190
\pinlabel {$F_{2}$} [bl] at 215 265
\pinlabel {$F_0$} [bl] at 165 10
\pinlabel {$F_1$} [bl] at 100 265
\pinlabel {$F_2$} [bl] at 290 190
\pinlabel {$M_0$} [bl] at 165 100
\pinlabel {$M_1$} [bl] at 185 170
\pinlabel {$M_2$} [bl] at 230 210
\endlabellist
  \centerline{\includegraphics[width=8cm]{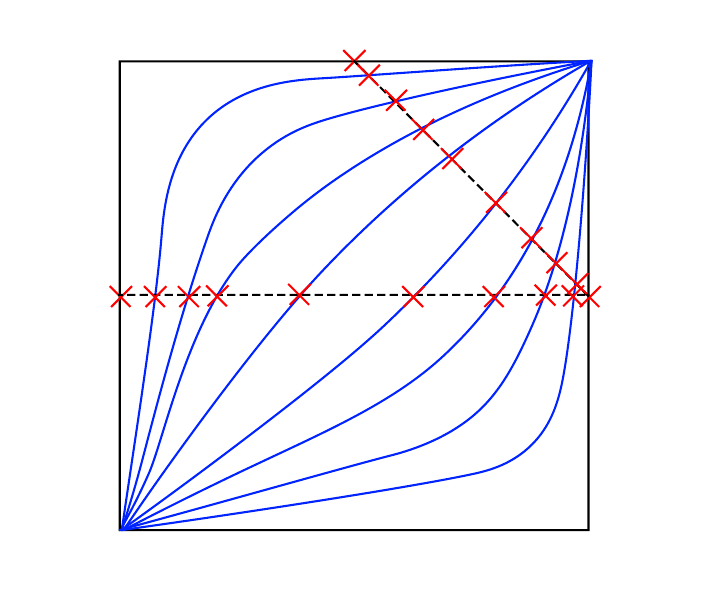}}
  \caption{Grafted trajectories between $M_0$, $M_1$, and $M_2$.}
  \label{fig:grafted_square}
\end{figure} 

Call $\Gamma(\nabla K_{1})$ the graph of the anti-gradient flow of $K_1$. It is diffeomorphic to $M_1\times[0,1]^2\times\mathbb{R}$, and therefore of dimension $\dim M_1 + 3$. Also set $\Gamma(H^s_{12}\arrowvert_{\{s'=1/2\}})=\{(x,s,1/2,H^s_{12}(x),s,1/2),x\in M_0, s\in[0,1]\}$ and $\Gamma(H^{2s-1}_{23}\arrowvert_{\{s+s'=3/2\}})=\{(x,s,s',H^{2s-1}_{23}(x),s,s'),x\in M_1,s+s'=3/2\}$. We have the following diffeomorphism:
\begin{align*} \mathcal{M}_{\rm gr^2}(x,y) \rightarrow &(W^u(x)\times \Gamma(\nabla K_{1})\times W^s(y)) \\ 
& \cap (\Gamma(H^s_{01}\arrowvert_{\{s'=1/2\}})\times\Gamma(H^{2s-1}_{12}\arrowvert_{\{s+s'=3/2\}})),\\
(\gamma_0,\gamma_1,\gamma_2) \mapsto &(\gamma_0(0),\gamma_1(0), \gamma_1(u),\gamma_2(u)).
\end{align*}
We may choose generic metrics on $M_i$ which satisfy ``grafted Morse-Smale'' conditions, that is that $\Gamma(H^s_{01}\arrowvert_{\{s'=1/2\}})$ intersects $W^u(x)\times W^s(z)$ transversally for any $x\in {\rm Crit}(K_0),z\in {\rm Crit}(K_1)$, and $\Gamma(H^{2s-1}_{12}\arrowvert_{\{s+s'=3/2\}})$ intersects $W^u(z)\times W^s(y)$ transversally for any $z\in {\rm Crit}(K_1),y\in {\rm Crit}(K_2)$. This in particular implies that $\Gamma(H^s_{01}\arrowvert_{\{s'=1/2\}})\times\Gamma(H^{2s-1}_{12}\arrowvert_{\{s+s'=3/2\}})$ intersects $W^u(x)\times \Gamma(\nabla K_{1})\times W^s(y)$ transversally for any $x\in {\rm Crit}(K_0),y\in {\rm Crit}(K_2)$. It follows that
\begin{align*}
\dim\mathcal{M}_{\rm gr^2}(x,y)=&\ {\rm ind}_{K_0}(x)+\dim (\Gamma(\nabla K_{1})) +\dim(M_2\times[0,1]^2) \\ &-{\rm ind}_{K_2}(y)+\dim \Gamma(H^s_{01}\arrowvert_{\{s'=1/2\}})+\dim  \Gamma(H^{2s-1}_{12}\arrowvert_{\{s+s'=3/2\}})\\ &-\dim(M_0\times[0,1]^2)-\dim(M_1\times[0,1]^2)\\ &-\dim(M_1\times[0,1]^2)-\dim(M_2\times[0,1]^2),\\
=&\ {\rm ind}_{K_0}(x) +\dim M_1+3 + \dim M_2 + 2 \\ &- {\rm ind}_{K_2}(y) + \dim M_0 + 1 + \dim M_1+1 \\ & - \dim M_0-2 -2(\dim M_1-2) - \dim M_2-2,\\
\dim\mathcal{M}_{\rm gr^2}(x,y)=&\ {\rm ind}_{K_0}(x)-{\rm ind}_{K_2}(y)-1.
\end{align*}

$\mathcal{M}_{\rm gr^2}(x,y)$ is compactified by adding trajectories in $M\times\{0,1\}\times\{0,1\}$ which correspond to the gluing on one end of trajectories of anti-gradient lines of $f_0+h$ (seen as an interpolation function between $f_0$ and $f_0$) and of grafted trajectories of anti-gradient lines of $f_0$ and $f_2$ with the map $H_{02}=H^1_{12}\circ H^1_{01}$, and on the other end, to gluing of grafted trajectories of anti-gradient flow lines of $f_0$ and $f_1$ and of grafted trajectories of anti-gradient flow lines of $f_1$ and $f_2$. 

As in the previous subsection, we will look at trajectories in $$\mathcal{M}_{\rm gr^2}(x)=\bigcup_{x'\in {\rm Crit}_1(K_2)}\mathcal{M}_{\rm gr^2}((x,0,0),(x',1,1)),$$ where $x\in{\rm Crit}_1(f_0)$ and $x'\in{\rm Crit}_1(f_2)$, which is a one-dimensional moduli space. It appears in the boundary of $$\widetilde{\mathcal{M}}_{\rm gr^2}(x)=\bigcup_{y\in {\rm Crit}_0(K_2)}\mathcal{M}_{\rm gr^2}((x,0,0),(y,1,1)),$$a two dimensional moduli space which can be parametrized by angles $\lambda$ and $\alpha$. Since $\lambda$ and $\alpha$ are defined at the start of the trajectories, before the graft, we may give them the same definitions as in Subsection \ref{subsection:functoriality}. See Figures \ref{fig:lambda} and \ref{fig:alpha}. Then trajectories in $\mathcal{M}_{\rm gr^2}(x)$ are uniquely determined by these angles. For generic values of $\lambda$, the restriction of $M\times[0,1]^2$ to the submanifold which is reached by trajectories which start at an angle $\lambda$ is Morse-Smale, and we may define a morphism $\phi_\lambda^{\rm gr}:\pi_1^{\text{Morse}}(f_0,\ast_0)\rightarrow\pi_1^{\text{Morse}}(f_2,\ast_3)$. The proof that this is well defined is analogous to that of Theorem \ref{thm:quotient_grafted}, with the only change being that we replace every moduli space involved in the proof with one of trajectories starting at angle $\lambda$. If we include broken trajectories, the following equalities hold: $$\phi^{\rm gr}_0 = \phi_{H_{02}}^{\rm gr}\circ id,$$and $$\phi^{\rm gr}_{\pi/2}=\phi_{H_{12}^{0}}^{\rm gr}\circ\phi_{H_{01}^{0}}^{\rm gr}.$$ 

We get an analog to Proposition \ref{prop:diag_lambda}.

\begin{prop}\label{prop:diag_lambda_gr}
For any non-generic value $\lambda\in]0,\pi/2[$, there exists a small enough $\varepsilon$ and an isomorphism $\psi_{\lambda}$ such that the following diagram is commutative:
\[\begin{tikzcd}[column sep=5em, row sep=3em]
\pi^{\text{Morse}}_1(f_0,\ast_0) \arrow[r,"\phi^{\rm gr}_{\lambda-\varepsilon}"] \arrow[rd, "\phi^{\rm gr}_{\lambda+\varepsilon}"']
& \pi^{\text{Morse}}_1(f_2,\phi^{\rm gr}_-(\ast_0)) \arrow[d,"\psi_\lambda"] \\
&  \pi^{\text{Morse}}_1(f_2,\phi^{\rm gr}_+(\ast_0)).
\end{tikzcd}\]
More precisely, $\psi_{\lambda}$ is the conjugation by the $f_2$-Morse path $\sigma_\lambda$ from $\phi^{\rm gr}_-(\ast_0)$ to $\phi^{\rm gr}_+(\ast_0)$ which appears in the boundary of the unstable manifold of $(\ast_0,0,0)$ between $\lambda-\varepsilon$ and $\lambda+\varepsilon$, ie $$\psi_\lambda(\ell_-)=\sigma_\lambda \ell_- \sigma_\lambda^{-1},$$ where $\ell_-$ is a $\phi^{\rm gr}_-(\ast_0)$ based Morse loop.
\end{prop}

\begin{proof} Every element $\mathcal{M}_{\rm gr^2}(x)$ is uniquely determined by $\alpha$ and $\lambda$, and we get the same picture as in Figure \ref{fig:angles}. In particular, we get the same type of singularities as in the non-grafted cases, and these singularities are not affected by the graft, as they have to do with breakings and gluings which only happen in the corners of the square, and birth-deaths which are not affected. We may then apply the same arguments as in the proof of Proposition \ref{prop:diag_lambda}, replacing all trajectories by grafted trajectories. The arguments for the birth-deaths and the case of $\lambda \in [0,1], \alpha\in{0,1}$ (see Figure \ref{fig:break_y}) adapt directly to the grafted case. The other arguments do not change when replacing the moduli spaces considered with those of grafted trajectories, and so the behavior is identical. \end{proof}

This proposition has several consequences. Firstly:

\begin{theorem} 
Let $(f_0,f_1,H_0)$ and $(f_0,f_1,H_1)$ be two sets of grafted interpolation data from $f_0:M_0\rightarrow\mathbb{R}$ to $f_1:M_1\rightarrow\mathbb{R}$ , and suppose $H_0$ and $H_1$ are homotopic. Let $$\phi_{H_0}^{\rm gr}:\pi_1^{\rm Morse}(f_0,\ast_0)\rightarrow\pi_1^{\rm Morse}(f_1,\phi_{H_0}(\ast_0))$$ and $$\phi_{H_1}^{\rm gr}:\pi_1^{\rm Morse}(f_1,\ast_0)\rightarrow\pi_1^{\rm Morse}(f_0,\phi_{H_1}(\ast_0))$$ be the associated grafted continuation maps between their Morse fundamental groups. Then there exists a Morse path $\ell$ from $\phi_{H_0}(\ast_0)$ to $\phi_{H_1}(\ast_0)$ such that $\phi_{H_1}^{\rm gr} = \ell \phi_{H_0}^{\rm gr}\ell^{-1}$. 
\end{theorem}
\begin{proof}
This is a direct consequence of Proposition \ref{prop:diag_lambda_gr}, where we take $M_1=M_2$, and $H_{12}^\tau=id$ for all $\tau\in[0,1]$. Then if $\dim M_1>1$, for any homotopy between between $H_0$ and $H_1$, one may find a homotopy $H^\tau$ such that at any time $\tau$, $\text{Crit}(f_0)\times\text{Crit}(f_1)=\varnothing$. If $\dim M_1=1$, we cannot in general achieve this condition, and there will be isolated moments $\tau$ at which we could have $(\text{Crit}(f_0)\times\text{Crit}(f_1))\cap\Gamma(H^\tau)\neq\varnothing$. We may perturb the metric on $M_1\times[0,1]^2$ (in a non-split way) near these points so that the moduli spaces are transversally cut out, without affecting the dynamics on the boundary of the square.  The arguments from the previous proofs still hold (in particular, most of the behavior of the grafts depends on the dynamics before the graft, which we have not changed).

Then $\phi^{\rm gr}_0$ and $\phi^{\rm gr}_{\pi/2}$ are obtained by taking the trajectories that break at $M\times\{1\}\times\{0\}$ and $M\times\{0\}\times\{1\}$, and so $\phi^{\rm gr}_0=\phi^{\rm gr}_\varepsilon$, and $\phi^{\rm gr}_{\pi/2}=\phi^{\rm gr}_{\pi/2-\varepsilon}$. The proposition tells us that $\phi_{0}^{\rm gr}=\ell \phi_{\pi/2}^{\rm gr}\ell^{-1}$, where $\ell$ is the Morse path from $\phi_{H_0}(\ast_0)$ to $\phi_{H_1}(\ast_0)$ which appears in the boundary of $\bigcup_{y\in{\rm Crit}_0(K_2)}\mathcal{M}_{\rm gr}(x,y)$. But we have $\phi_{0}^{\rm gr}=\phi_{H_{01}^{1}}^{\rm gr}\circ id$ and $\phi_{\pi/2}^{\rm gr} = id \circ \phi_{H_{01}^{0}}^{\rm gr}$.
\end{proof}

\begin{theorem}\label{thm:diag_gr}
Let $(f_0,g_0), (f_1,g_1)$, and $(f_2,g_2)$ be three Morse-Smale pairs over manifolds $M_0$, $M_1$, and $M_2$, and consider a grafted interpolation set $(f_0,f_1,H_{01})$ (resp. $(f_1,f_2,H_{12})$, $(f_0,f_2,H_{02})$) be a grafted interpolation pair between $f_0$ and $f_1$ (resp. between $f_1$ and $f_2$, between $f_0$ and $f_2$), which induces a morphism $\phi^{\rm gr}_{12}$ (resp. $\phi^{\rm gr}_{23}$, $\phi^{\rm gr}_{13}$) between the corresponding Morse fundamental groups. Then there exists an isomorphism $\psi$ such that the following diagram is commutative: 

\[\begin{tikzcd}[column sep=4em, row sep=3em]
\pi_1^\text{Morse}(f_0,\ast_0) \arrow[r, "\phi^{\rm gr}_{02}"] \arrow[d, "\phi^{\rm gr}_{01}"]
& \pi_1^\text{Morse}(f_2,\phi^{\rm gr}_{02}(\ast_0)) \arrow[d, "\psi"', "\simeq"] \\
\pi_1^\text{Morse}(f_1,\phi^{\rm gr}_{01}(\ast_0))  \arrow[r, "\phi^{\rm gr}_{12}"]
& \pi_1^\text{Morse}(f_2,\phi^{\rm gr}_{12}(\phi^{\rm gr}_{01}((\ast_0))).
\end{tikzcd}\]
\end{theorem}
\begin{proof} This theorem is an immediate consequence of Proposition \ref{prop:diag_lambda}, and of the equalities $\phi^{\rm gr}_0=\phi^{\rm gr}_\varepsilon$, $\phi^{\rm gr}_{\pi/2}=\phi^{\rm gr}_{\pi/2-\varepsilon}$, and $\phi^{\rm gr}_0 = \phi_{H_{02}^{1}}^{\rm gr}\circ id,$ and $\phi^{\rm gr}_{\pi/2}=\phi_{H_{12}^{1/2}}^{\rm gr}\circ\phi_{H_{01}^{0}}^{\rm gr}.$ In this case, the homotopies are constant, and we get $$\phi_{H_{12}}^{\rm gr}\circ\phi_{H_{01}}^{\rm gr}= \phi^{\rm gr}_{\pi/2} =\phi^{\rm gr}_0\circ\psi=\phi_{H_{02}}^{\rm gr}\circ \psi,$$ which, by taking $\phi^{\rm gr}_{01}=\phi^{\rm gr}_{H_{01}}$, $\phi^{\rm gr}_{12}=\phi^{\rm gr}_{H_{12}}$, and $\phi^{\rm gr}_{02}=\phi^{\rm gr}_{H_{02}}$, gives us the commutativity of the diagram. \end{proof}

\begin{cor}
Let $(f_0,g_0)$ be a Morse-Smale pair over $M_0$, $(f_1,g_1)$ be a Morse-Smale pair over $M_1$. We suppose that there exists a diffeomorphism $H:M_0\rightarrow M_1$. Let $(f_{0},f_{1}, H)$ and $(f_{1},f_{0}, H^{-1})$ be sets of grafted interpolation data which induce morphisms $\phi^{\rm gr}_{01}: \pi_1^\text{Morse}(f_0,\ast_0)\rightarrow \pi_1^\text{Morse}(f_1,\ast_1)$, $\phi^{\rm gr}_{10}:\pi_1^\text{Morse}(f_1,\ast_1)\rightarrow \pi_1^\text{Morse}(f_0,\ast_0)$ between their Morse fundamental groups. Then $\phi^{\rm gr}_{01}\circ\phi^{\rm gr}_{10}=id$ and $\phi^{\rm gr}_{10}\circ\phi^{\rm gr}_{01}=id$. In particular, $\phi^{\rm gr}_{01}$ and $\phi^{\rm gr}_{10}$ are isomorphisms. 
\end{cor}
\begin{proof} If $H$ is a diffeomorphism, then Theorem \ref{thm:diag_gr} tells us that the morphism $\phi^{\rm gr}_{01}$ induced by the set of grafted interpolation data $(f_0,f_1,H)$ is an isomorphism and its inverse is the morphism $\phi^{\rm gr}_{10}$ induced by the set of grafted interpolation data $(f_1,f_0, H^{-1})$.
\end{proof}

\section{A relative Morse fundamental group over $M\times\mathbb{R}$}\label{section:relative}

In this last section, we build an algebraic invariant based on the geometric setting that is used to build our continuation maps, i.e. Morse-Smale pairs on $M\times\mathbb{R}$ such that the restriction to certain ``slices'' are Morse-Smale pairs on $M$, and such that these latter Morse-Smale pairs provide all the critical points. We will show that properties of such an invariant can allow us to deduce information about the continuation maps between the Morse fundamental groups. Since $M\times\mathbb{R}$ is non-compact, this will take the form of a \textit{relative} Morse fundamental group over $M\times\mathbb{R}$. 

We will look at what we call \textit{interpolation-type} functions:
\begin{definition}\label{setting} An \textit{interpolation-type} function over a manifold $M$ is a Morse function $F:M\times\mathbb{R}\rightarrow \mathbb{R}$, such that $F$ can be written as $F=\tilde{F}+Ch$, for some $C>0$, where:
\begin{itemize}\item $h:\mathbb{R}\rightarrow\mathbb{R}$ is a  Morse function such that $\lim_{|s|\rightarrow+\infty}|h(s)|=+\infty$, with critical points $p_0,p_1,…,p_k\in\mathbb{R}$. \item $\tilde{F}:M\times\mathbb{R}\rightarrow\mathbb{R}$ is a function such that, for some disjoint intervals $I_1,…I_{k-1}$ that contain respectively $p_1, …, p_{k-1}$, $\tilde{F}\arrowvert_{M\times I_i}=f_i$, where $f_1,…,f_{k-1}:M\rightarrow\mathbb{R}$ are Morse functions on $M$. We also assume that, for some small enough $\varepsilon$, $\tilde{F}\arrowvert_{M\times]-\infty,p_0+\varepsilon]}\equiv f_0$ and $\tilde{F}\arrowvert_{M\times[p_{k}-\varepsilon,+\infty[}\equiv f_{k}$, where $f_0$ and $f_{k}$ are Morse functions over $M$.
\item $C$ is big enough so that $$|\frac{\partial \tilde{F}}{\partial s}(x,s)|<C|h'(s)|,  \forall (x,s)\in M\times(\mathbb{R}\setminus\{p_0,…,p_k\}).$$ 
\end{itemize}
\end{definition}

We choose a metric $G$ on $M\times\mathbb{R}$ for which $(F,G)$ is Morse-Smale, and such that its restriction to $M\times\{p_i\}$ is Morse-Smale relatively to $f_i$, and its restriction to $]-\infty,p_0+\varepsilon]\times M$ and $[p_{k}-\varepsilon,+\infty[\times M$ is identically equal to metrics $g_0$ and $g_k$ on $M$ which are Morse-Smale relatively to $f_0$ and $f_{k}$. We call such a pair an \textit{interpolation-type pair}.

Call $\mathcal{I}_M$ the subset of $\{0…,k\}$ such that $p_i$ is a local maximum for every $i\in\mathcal{I}_M$, and $\mathcal{I}_m$ the subset of $\{0,…,k\}$ such that $p_i$ is a local minimum for every $j\in\mathcal{I}_m$. Note that of course, two consecutive elements of $\{0,…,k\}$ cannot be in the same subset $\mathcal{I}_M$ or $\mathcal{I}_m$. Then critical points of $F$ are the collection of critical points of $f_0,…,f_k$, with the following index shift:
\[ {\rm Crit}_j(F)=\bigcup_{i\in\mathcal{I}_M} {\rm Crit}_{j-1}(f_i)\times\{p_i\} \cup \bigcup_{i\in\mathcal{I}_m} {\rm Crit}_{j}(f_i)\times\{p_i\}.\]

We will consider the Latour cells of these critical points. For critical points in $M\times\{p_1,…,p_{k-1}\}$, these behave like ordinary Latour cells. However, if $\lim_{s\rightarrow -\infty} h(s)=-\infty$, the Latour cells of critical points of $f_0$ have an open end at $-\infty$. Since the dynamics are split on $M\times]-\infty,p_0]$, we have a diffeomorphism
$$\overline{\mathcal{M}}((x,p_0),M\times]-\infty, p_0])\simeq \overline{\mathcal{M}}(x,M)\times]-\infty,p_0].$$ We may then compactify them by formally adding $\mathcal{M}(x,M)\times\{-\infty\}$. We may do the same for critical points in $M\times\{p_k\}$ if $\lim_{s\rightarrow +\infty} h(s)=-\infty$ by adding the Latour cells at $+\infty$. We call such Latour cells \textit{compactified Latour cells}.

Keeping this compactification as well as the index shift in mind, we give the following definition:
\begin{definition} A \textbf{Morse step} of a interpolation-type function $F$ is one of the following:
\begin{itemize} \item the (compactified) Latour cell of an index 1 critical point of $F$, with a choice of orientation;
\item if $\lim_{s\rightarrow -\infty} h(s)=-\infty$, the pair $(\overline{\mathcal{M}}(x,M),-\infty)$, where $\overline{\mathcal{M}}(x,M)$ is Latour cell of an index 1 critical point of $f_0$ (as a Morse function on $M$), with a choice of orientation;
\item if $\lim_{s\rightarrow +\infty} h(s)=-\infty$, the pair $(\overline{\mathcal{M}}(x,M),+\infty)$, where $\overline{\mathcal{M}}(x,M)$ is Latour cell of an index 1 critical point of $f_{k}$ (as a Morse function on $M$), with a choice of orientation.
\end{itemize}We say that the Morse step passes \textit{through} the index 1 critical point. \end{definition} 

There are two types of Morse steps through index 1 critical points of $F$. The first passes through a critical point in $\cup_{i\in\mathcal{I}_m}{\rm Crit}_1(f_i)\times\{p_i\}$. These steps are made of trajectories that move only in the $M$ direction, and correspond to Morse steps of the Morse function $f_i$ on $M$ at $p_i$. The second ones pass through critical points in $\cup_{i\in\mathcal{I}_M}{\rm Crit}_0(f_i)\times\{p_i\}$. The trajectories forming these steps do travel along the $\mathbb{R}$ coordinate (in fact, near $t\rightarrow-\infty$, they are constant in the $M$ coordinate). Suppose the step is through a point in ${\rm Crit}_0(f_i)\times\{p_i\}$, for $i\in\mathcal{I}_M$, and suppose $i\neq0,k$. Then the step ends at points in ${\rm Crit}_0(f_{i\pm1})\times\{p_{i\pm1}\}$. If $i=0$ (resp. $k$), then the restriction of the trajectory to $M$ is constant at a minimum $m_0$ or $m_{k}$ of $f_0$ or $f_{k}$, while the $\mathbb{R}$-coordinate diverges to $\pm\infty$: we say that the limit is $(m_0,-\infty)$  (resp. $(m_{k},+\infty)$) when $t\rightarrow+\infty$ (this is a formal notation). The compactification of the Latour cell adds a formal point at infinity of the form $(\gamma_{m_0},-\infty)$ or $(\gamma_{m_k},+\infty)$. Since we see steps through index 1 critical points of $f_0$ or $f_{k}$ as placed as infinity, we will also see those steps as ending in points of the form $(\gamma_{m_0},-\infty)$ or $(\gamma_{m_k},+\infty)$, and consider the points $m_0$ and $m_k$ to be points at $\pm\infty$.

\begin{remark} We may also see the trajectories which diverge towards points of the form $(m_0,-\infty)$ or $(m_{k},+\infty)$ as trajectories which intersect $M\times\{d_0\}$ or $M\times\{d_{k}\}$, for constants $d_0$ and $d_{k+1}$ that are respectively smaller, or bigger, than the smallest, or biggest, critical point of $h$. In this sense, we could also look at them as trajectories which enter a certain sublevel set $F^{-1}(c)$, where $c$ is smaller than any critical value of $F$. \end{remark} 

Let us choose $\ast \in {\rm Crit}_0(F)\cup\widetilde{{\rm Crit}}_0(f_0)\cup\widetilde{{\rm Crit}}_0(f_k)$, where $\widetilde{{\rm Crit}}_i(f_{0})$ (resp. $\widetilde{{\rm Crit}}_i(f_k)$) is equal to ${\rm Crit}_i(f_{0})\times\{-\infty\}$ (resp. ${\rm Crit}_i(f_k)\times\{+\infty\}$) if $\lim_{s\rightarrow -\infty} h(s)=-\infty$ (resp. $\lim_{s\rightarrow +\infty} h(s)=-\infty$), and is empty otherwise.

\begin{definition} We say that a Morse step \textit{begins} (resp. \textit{ends}) at a point $$ (y,s) \in \widetilde{{\rm Crit}}_0(f_0)\cup{\rm Crit}_0(F)\cup\widetilde{{\rm Crit}}_0(f_k)$$ if the beginning (resp. end) of the Latour cell (with regards to our chosen orientation on the step) is of the form $(\gamma,\gamma_{(y,s)})$ if $s\neq \pm\infty$, or of the form $(\gamma_y,s)$ if $s=\pm\infty$. Two Morse steps are said to be \textit{consecutive} if the end of the first one is the beginning of the second one. A Morse \textit{path} for $F$ is a word of the form $\sigma_1…\sigma_r$, where $\sigma_i$ is a Morse step for every $i\in\{1,…,r\}$, and $\sigma_i$ and $\sigma_{i+1}$ are consecutive steps. We say that it is \textit{based} at $\ast$ if $\sigma_1$ begins in $\ast$.  \end{definition}

We call $\tilde{\mathcal{L}}(F,\ast)$ the set of all Morse paths based at $\ast$ and which either end in $\ast$ (in this case we may call them Morse \textit{loops}), or in a point of $\widetilde{{\rm Crit}}_0(f_0)\cup\widetilde{{\rm Crit}}_0(f_k)$, adding to the set the empty word $1$ which designates the constant path at $\ast$. This set does not have a group structure, as in general, we may not concatenate paths, nor invert them. However, in certain cases, we may: if $\ell = \sigma_1…\sigma_k$ is a Morse loop, then $\ell^{-1}=\sigma_k^{-1}…\sigma_1^{-1}$ is as well. Furthermore, if $\ell'$ is another Morse loop, then $\ell\ell'\in\tilde{\mathcal{L}}(F,\ast)$. 

We define an equivalence relation on $\tilde{\mathcal{L}}(F,\ast)$ in the following manner: $ \ell \sim \ell' $ if
\begin{enumerate}[{(1)}] \item\label{relation:inverse} $\ell = \sigma_1…\sigma_{k-1}\sigma_k\sigma_k^{-1}\sigma_{k+2}…\sigma_r$ and $\ell'=\sigma_1…\sigma_{k-1}\sigma_{k+2}…\sigma_r$, where $\sigma_k^{-1}$ is the step $\sigma_k$ with the opposite orientation. 

\item\label{relation:boundary} $\ell = \sigma_1…\sigma_{i-1}\sigma_i\sigma_{i+1}…\sigma_r$ and $\ell'=\sigma_1…\sigma_{i-1}(\sigma'_1…\sigma'_p)\sigma_{i+1}…\sigma_r,$ where $\sigma_i^{-1}\sigma'_1…\sigma'_p$ form the sequence of index 1 Latour cells in the oriented boundary of the (compactified) Latour cell of some $x\in{\rm Crit}_{2}(F)\cup\widetilde{{\rm Crit}}_2(f_0)\cup\widetilde{{\rm Crit}}_2(f_{k})$.

Note that this implies that if there exists $x$ such that $\partial W^{u}(x)=\ell$, then $\ell$ is equivalent to the empty word. 

\item\label{relation:infty} $\ell=\sigma_1\sigma_2…\sigma_p$ and $\ell'=\sigma_1\sigma_2…\sigma_{p-1}$, where $\sigma_p$ is a Morse step through a critical point in $\widetilde{{\rm Crit}}_1(f_0)\cup\widetilde{{\rm Crit}}_1(f_{k})$. This case essentially tells us that \textit{where} we land at $\pm\infty$ does not matter, what matters is only whether or not we land at $\pm\infty$, as in the case where this applies, the word ends in a step through an index 0 critical point of $f_0$ or $f_{k}$ which goes to $\pm\infty$.

\end{enumerate}
We then set $\pi_1^{\text{Morse}}(F,\ast)=\faktor{\tilde{\mathcal{L}}(F,\ast)}{\sim}$. 

\begin{remark} These equivalences have several consequences on the inverted and concatenated paths described previously. Firstly, if $\ell$ is a loop in $\ast$, then $\ell\ell^{-1}\sim1$. Also, if $\ell\sim\ell'$, then $\ell^{-1}\sim\ell'^{-1}$. Finally, given two Morse loops $\ell_1$ and $\ell_2$ such that $\ell_1\sim\ell_2$, then if we have another path $\ell_3$, then $\ell_1\ell_3\sim\ell_2\ell_3$. \end{remark}
\begin{remark} A more visual way of defining relation \ref{relation:boundary} would be to say that two paths are equivalent if they form the boundary of the Latour cell of an index 2 critical point, or if together they form the boundary of the union of several such Latour cells (modulo the first relation). In the typical Morse fundamental group setting, since we have a group structure, this is achieved by  modding out by the normal subgroup generated by such boundaries. Here, since we do not have a group structure and so the notion of a generated subgroup no longer makes sense, we replace this notion with the one of equivalences on steps. Figure \ref{fig:gluing_unstable_mflds} illustrates how this is an equivalent approach: in this case, we have three unstable manifolds which glue together. The first one tells us that $\sigma_\infty\sim\sigma_3^{-1}\sigma_2^{-1}\sigma_1^{-1}$, the second one gives us $\sigma_4\sim\sigma_6^{-1}\sigma_5^{-1}\sigma_2^{-1}$, and the last $\sigma_\infty'\sim\sigma_7^{-1}\sigma_6^{-1}\sigma_8^{-1}$. One can check that this gives us two equivalence classes of paths based at $\ast$, $[\sigma_1\sigma_4^{-1}\sigma_7]$ and the empty word.
 \end{remark}

\begin{figure}
  \centerline{\includegraphics[width=11cm]{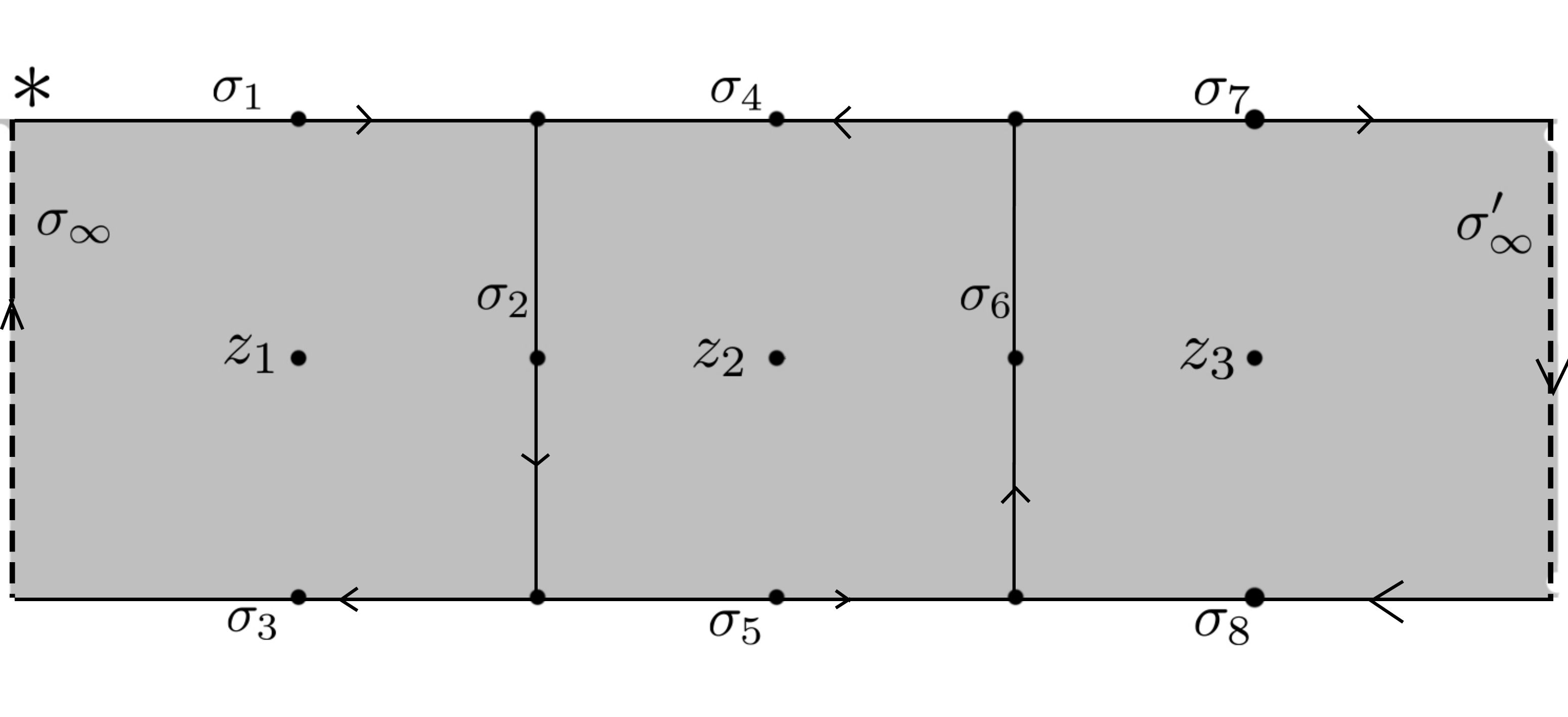}}
  \caption{Union of three disks, two of which have "ends" at $-\infty$. We have two equivalence classes of paths based at $\ast$, $[\sigma_1\sigma_4^{-1}\sigma_7]$ and $[1]$.}
  \label{fig:gluing_unstable_mflds}
\end{figure}

\begin{example}\label{ex:relative} Here are a few examples: 
\begin{enumerate} \item Let's look at what the Morse fundamental group looks like for the case described in Section \ref{section:continuation_map}, where $h(s)= s^3-\frac{3}{2}s^2$. Its limit at $-\infty$ is $-\infty$, and its limit at $+\infty$ is $+\infty$. We suppose $\tilde{F}\equiv f$, where $f:M\rightarrow \mathbb{R}$ is a Morse function. Then the anti-gradient trajectories of $F$ are determined by $f$ on $M$ coordinates, and by $h$ on the $\mathbb{R}$ coordinate. We suppose $f$ has a unique minimum $m_\ast$. We choose $(m_\ast,-\infty)$ as our base point. Our non-trivial Morse paths are concatenations of paths of the form $\beta_1…\beta_{r'}\sigma \alpha_1…\alpha_r\sigma^{-1}$, where $\alpha_i$ are Morse steps through index 1 critical points $(z_i,1)$ of $F$, $\sigma$ is the unique Morse step through $(m_\ast,1)$ which goes from $(m_\ast,-\infty)$ to $(m_\ast,1)$, and $\beta_i$ are Morse steps through $(z_i,-\infty)$ (we may take $r$ or $r'=0$).

For relations, we have relations which come from Morse relations of $f$ at $s=1$. We also have a relation for each $(z_i,0)$, which has an unstable manifold with the following oriented boundary: $\alpha_i\sigma^{-1}\beta_i\sigma$. This tells us that the path $\alpha_i\sigma^{-1}\beta_i\sigma\sim 1$, and $\sigma \alpha_i\sigma^{-1}\sim \beta_i^{-1} \sim 1$, where the last equivalence is per rule (\ref{relation:infty}). One can check that in fact, all paths are trivial, and so, $\pi_1^\text{Morse}(F,(m_\ast,-\infty))=\{1\}$. 

If we choose $(m_\ast,1)$ as our base point, the paths we have are different. They are concatenations of paths of the form $\alpha_1…\alpha_r\sigma^{-1}$ or $\alpha_1…\alpha_r\sigma^{-1}\beta_1…\beta_{r'}\sigma$. The relations of the previous paragraph give us: $\alpha_i\sim\sigma^{-1}\beta_i\sigma$, $\alpha_i\sigma^{-1}\sim\sigma^{-1} \beta_i^{-1}\sim\sigma^{-1}$. We also have the usual Morse relations of $f$ on the $x_i$s. All in all, we get $\pi_1^\text{Morse}(F, (m_\ast,1))=\pi_1^{\text{Morse}}(f, m_\ast)\cup \{[\sigma^{-1}]\}$. In particular, this is not a group, although it has a subset which is.

\item Suppose $h(s)=s^2$. We have a unique critical point at $s=0$, which happens to be a minimum, and its limits at $\pm \infty$ are $+\infty$ (in particular, it is decreasing before $s=0$, and increasing after). Therefore $F$ does not have any critical points outside of $\{s=0\}$, and no trajectory starting at $\{s=0\}$ can leave this zone. Morse steps based at a minimum $\ast=(m_\ast,0)$ of $F$ are therefore only $f$-steps, where $f$ is the Morse function in $\{s=0\}\times M$, and the relations are also only the Morse relations of this Morse function. Therefore, $\pi_1^{\text{Morse}}(F,\ast)=\pi_1^{\text{Morse}}(f,m_\ast)$.
\item Suppose $h(s)=-s^2$. Then it has a unique critical point at $s=0$, which is a maximum, and its limits at $\pm \infty$ are $-\infty$ (in particular, it is increasing before $s=0$, and decreasing after). As in the first example, we suppose $\tilde{F}\equiv f$, where $f:M\rightarrow \mathbb{R}$ is a Morse function which has a unique minimum $\ast$. Therefore, $F$'s only critical points are in $\{s=0\}$, so correspond to critical points of $f$, the Morse function on $M$ at $s=0$, but with an index shift of one. Since we have no minimum of $h$, we must take $(m_\ast,+\infty)$ or $(m_\ast,-\infty)$ as our base point. Let us choose $(m_\ast,-\infty)$. Therefore, the non trivial Morse paths are concatenations of paths of the form $\alpha_1…\alpha_r\sigma \beta_1…\beta_{r'}\sigma^{-1}$ or $\alpha_1…\alpha_r\sigma \beta_1…\beta_{r'}\sigma^{-1}$, where $\sigma$ is the unique step through $(m_\ast,0)$ and goes from $-\infty$ to $+\infty$, $\alpha_1…\alpha_r$ are $f$-steps at $-\infty$, and $\beta_1…\beta_r$ are $f$-steps at $+\infty$.

For every Morse step $\alpha$ at $-\infty$ through a critical point $(z,-\infty)$, there is a unique step $\beta$ at $+\infty$ through $(z,+\infty)$, as well as a unique relation thanks to $(z,0)$: the oriented boundary of the Latour cell of $(z,0)$ gives us the sequence $\alpha\sigma \beta\sigma^{-1}$. This in particular, tells us that this Morse path is equivalent to the empty word, that the Morse path $\sigma \beta \sigma^{-1}\sim \alpha^{-1} \sim 1$ (this last equivalence, we get from rule (\ref{relation:infty})), and $\alpha\sigma\sim\sigma \beta^{-1}\sim\sigma$. And so, the only non trivial Morse path is $\sigma$, and we get $\pi_1^\text{Morse}(F,(m_\ast,-\infty))=\{[\sigma],1\}$. \end{enumerate} \end{example}

Before stating our theorem, we recall the following definition of a relative fundamental group (see \cite{H} for more details):
\begin{definition} Let $(X,A)$ be a pair of spaces, with $A\subset X$, and choose a base point $\ast\in A$. Then $\pi_1(X,A,\ast)$ is the pointed set of homotopy classes of paths in $X$ which start in $\ast$ and end in $A$.\end{definition}
\begin{remark} Note that, despite its name, the relative fundamental group is not a group, but rather a pointed set. It is also in general not isomorphic to the quotient of $\pi_1(X,\ast)$ by $\pi_1(A,\ast)$.\end{remark}

Before stating our main theorem, we note that since $f_0$ and $f_k$ are Morse functions on $M$ which is compact, and $c$ is smaller than any critical value of $F$, then if $\lim_{s\rightarrow-\infty} h(s)=-\infty$ (resp. $\lim _{s\rightarrow+\infty} h(s)=+\infty$), we may choose a constant $d^{+}_c$ such that $M\times\{d^-_c\}\subset F^{-1}(]-\infty,c])$ (resp. $d^{+}_c$ such that $M\times\{d^-_c\}\subset F^{-1}(]-\infty,c])$).

\begin{theorem}Let $M$ be a closed smooth manifold, and $(F,G)$ an interpolation-type pair over $M$. We choose a base point $\ast \in {\rm Crit}_0(F)\cup\widetilde{{\rm Crit}}_0(f_0)\cup\widetilde{{\rm Crit}}_0(f_k)$. Let $c$ be a constant which is inferior to $F$'s smallest critical value, and $\ast'$ such that \[\ast'=\begin{cases} \ast &\text{ if } \ast\in{\rm Crit}_0(F),\\(m_\ast,d^-_c) &\text{ if } \ast =(m_\ast,-\infty)\in\widetilde{\text{Crit}}(f_0) ,\\ (m_\ast,d^+_c)&\text{ if } \ast =  (m_\ast,+\infty)\in\widetilde{\text{Crit}}(f_k) \end{cases}\]  Then, if $\pi_1^\text{Morse}(F,\ast)$ is the Morse fundamental group of $F$ with base point $\ast$, and $\pi_1(\mathbb{R}\times M, F^{-1}(]-\infty,c])\cup\{\ast'\},\ast')$ is the fundamental group of $\mathbb{R}\times M$ relatively to $F^{-1}(]-\infty,c])\cup\{\ast\}$ and with base point $\ast'$, there exists an isomorphism of pointed spaces \[\phi:\pi_1^\text{Morse}(F,\ast)\xrightarrow{\sim} \pi_1(\mathbb{R}\times M, F^{-1}(]-\infty,c])\cup\{\ast'\},\ast').\]   \end{theorem}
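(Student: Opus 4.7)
The plan is to build the isomorphism by exhibiting a CW-type structure on $\mathbb{R}\times M$ given by the unstable manifolds of $F$, with the sublevel set $F^{-1}(]-\infty,c])$ playing the role of the ``base'' subspace. The point $\ast'$ is defined so that it lies in $F^{-1}(]-\infty,c])$ whenever $\ast$ is at infinity (in which case we have a trajectory from $\ast'$ directly to the minimum $\ast$), and is $\ast$ itself otherwise.

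First I would construct the map $\phi$ directly on Morse paths. To a Morse step $\sigma$ through a critical point $x\in{\rm Crit}_1(F)$, associate the image of $\sigma$ inside $\mathbb{R}\times M$ parametrised from its starting 0-cell to its ending 0-cell along the compactified unstable manifold. For a step through an index $1$ critical point of $f_0$ or $f_{k+1}$ placed at infinity, truncate the diverging trajectory at the first time it enters $F^{-1}(]-\infty,c])$ and complete it by a path inside this sublevel set (any such completion will do since all such paths are homotopic rel.\ $F^{-1}(]-\infty,c])$). If $\ast$ itself lies at infinity, prepend a trajectory piece from $\ast'$ to the first 0-cell of the path; its existence follows from the unique trajectory property of interpolation-type functions. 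Concatenating step by step gives a path in $\mathbb{R}\times M$ starting at $\ast'$ and ending either at $\ast'$ or inside $F^{-1}(]-\infty,c])$.

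Next I would check that $\phi$ descends to the quotient by the equivalence $\sim$. Relation (\ref{relation:inverse}) is immediate since $\sigma\sigma^{-1}$ is sent to a path that backtracks, hence is homotopic rel.\ endpoints to the constant path. Relation (\ref{relation:boundary}) is the core point: the compactified unstable manifold $\overline{W^u(x)}$ of an index $2$ critical point $x$ is a topological $2$-disk embedded in $\mathbb{R}\times M$, so its oriented boundary is null-homotopic through the disk itself; the homotopy can be taken to be relative to $F^{-1}(]-\infty,c])$ because, for those disks whose boundary runs to infinity, the portions of boundary that diverge lie (after truncation) in $F^{-1}(]-\infty,c])$. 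Relation (\ref{relation:infty}) is handled by the observation that a step through a critical point of $f_0$ or $f_{k+1}$ lies, once truncated at $F^{-1}(c)$, entirely inside $F^{-1}(]-\infty,c])$, hence is trivial in the relative fundamental group. Thus $\phi$ is well-defined on $\pi_1^{\text{Morse}}(F,\ast)$.

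For surjectivity, given any continuous path $\gamma$ in $\mathbb{R}\times M$ from $\ast'$ to $F^{-1}(]-\infty,c])\cup\{\ast'\}$, standard transversality puts $\gamma$ in general position with respect to the stratification of $\mathbb{R}\times M$ by unstable manifolds, so that $\gamma$ avoids all critical points of index $\geq 2$ and meets stable manifolds of index $1$ critical points transversally in finitely many points. Pushing $\gamma$ along the anti-gradient flow $-\nabla F$ (and collapsing any portion that enters $F^{-1}(]-\infty,c])$, by relation (\ref{relation:infty})), I obtain a concatenation of Morse steps whose $\phi$-image is homotopic rel.\ the subspace to $\gamma$. Injectivity proceeds by the same argument one dimension up: a relative homotopy between two Morse paths may be perturbed to a map of a disk that meets unstable manifolds of codimension $\geq 3$ trivially, and the resulting $2$-parameter decomposition presents an equivalence in $\pi_1^{\text{Morse}}(F,\ast)$ as a sequence of moves of type (\ref{relation:inverse}) (when the disk sweeps past a fold of an unstable $1$-manifold), (\ref{relation:boundary}) (when the disk sweeps across an index $2$ unstable manifold), or (\ref{relation:infty}) (when the disk sweeps into $F^{-1}(]-\infty,c])$). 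The hard part will be this last transversality argument at infinity: because $\mathbb{R}\times M$ is non-compact and trajectories may escape only through the ``allowed'' ends determined by the signs of $\lim_{s\to\pm\infty}h(s)$, one must be careful that the deformation of $2$-parameter families is compatible with the sublevel-set collapse and that no spurious moves arise from trajectories crossing between connected components of $F^{-1}(]-\infty,c])$; this is ensured by the position of $\ast'$ in the correct component and by choosing $c$ below all critical values.
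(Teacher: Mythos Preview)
Your proposal is correct and follows essentially the same route as the paper: define $\phi$ by evaluating along steps (with truncation near infinity), check that relations (\ref{relation:inverse})--(\ref{relation:infty}) yield relative homotopies, and prove bijectivity via transversality plus gradient flow applied to paths (surjectivity) and to homotopy disks (injectivity). Two small points: in the injectivity step you want the disk transverse to \emph{stable} manifolds (so it generically misses those of index $\geq 3$), not unstable ones; and the paper handles the non-compact ends not by collapsing into $F^{-1}(]-\infty,c])$ but by projecting diverging trajectories onto the slices $M\times\{p_0;p_{k+1}\}$ and flowing by a cut-off vector field $\nabla\tilde F+\rho\,\nabla h$ supported in $M\times]p_0,p_{k+1}[$, which keeps the whole argument inside a compact region.
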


\begin{proof}
Firstly, notice we may see $M\times\{d_c^-;d_c^+\}$ as two copies of $M$ on which $f_0$ and $f_k$ are Morse functions, and if we restrict $(F,G)$ to these ``slices'' we get a Morse-Smale pair by construction. Therefore, $f_0$-steps and $f_k$-steps can be seen as Morse steps within $M\times\{d_c^-;d_c^+\}$.

We define a morphism \[ \tilde\phi: \tilde{\mathcal{L}}(F,\ast)\rightarrow\pi_1(\mathbb{R}\times M, F^{-1}(]-\infty,c])\cup\{\ast\},\ast)\] in the following manner:
\begin{itemize}
\item For Morse steps entirely contained in $M\times]p_0,p_{k+1}[$, we simply evaluate along the steps, which gives us topological paths traced by the steps.
\item For Morse steps through critical points of $f_0$ or $f_{k}$ and which go to $\pm\infty$, notice that the trajectories forming them which go to $\pm\infty$ must cross $M\times\{d^-_c;d^+_{c}\}$ in exactly one point, on the way to the limit at $t\rightarrow+\infty$ which is of the form $(m,\pm\infty)$. We take the topological path traced by the trajectory, but which stops at that point in $M\times\{d^-_c;d^+_{c}\}$. Since the dynamics are split, this trajectory is constant in the $M$ coordinate, and cutting it off means stopping it at a point in $\text{Crit}_0(f_0)\times\{d^-_c;d^+_{c}\}$. We then take the path which is traced by this cut-off trajectory.
\item For Morse steps "at $\pm\infty$", that is Morse steps through critical points of $f_0$ or $f_k$, we take the topological paths which trace the corresponding Morse steps in $M\times\{d^-_c;d^+_{c}\}$. 
\end{itemize}
We then glue these paths together, which gives us a well defined topological path, and take the homotopy class of such a path to be the image of the morphism. 

Let us show that this induces a morphism $\phi$ from $\pi_1^\text{Morse}(F,\ast)$ to $\pi_1(\mathbb{R}\times M, F^{-1}(]-\infty,c])\cup\{\ast\},\ast)$: take two Morse loops which are equivalent in $\pi_1^\text{Morse}(F,\ast)$. Then the steps that form them are equivalent, meaning that one is obtained from the other by a series of applications of rules (\ref{relation:inverse}),(\ref{relation:boundary}), or (\ref{relation:infty}). Let us check that each one induces a homotopy in $\pi_1(\mathbb{R}\times M, F^{-1}(]-\infty,c])\cup\{\ast\},\ast)$. For (\ref{relation:inverse}), this amounts to traveling along a portion of a path and then traveling back. One can therefore contract the path, and this defines a homotopy. For (\ref{relation:boundary}), this means we replace part of the path with another path which is on the other side of the union of several disks (the images of the Latour cell by the evaluation map, which give us disks with perhaps immersed and not embedded boundaries formed of the evaluation of Morse steps, which gives us homotopies between their evaluations). For Latour cells of critical points of $f_0$ or $f_k$, it is the same, but the disks are in $M\times\{d^-_c;d^+_{c}\}$. We may define a homotopy through the disks. Finally, for (\ref{relation:infty}), since for paths in $\pi_1(\mathbb{R}\times M, F^{-1}(]-\infty,c])\cup\{\ast\},\ast)$, it does not matter where the path ends as long as it ends in $F^{-1}(]-\infty,c])\cup\{\ast\}$, and one can contract along the final part of the path, which is a homotopy. Therefore, these loops are equivalent in $\pi_1(\mathbb{R}\times M, F^{-1}(]-\infty,c])\cup\{\ast\},\ast)$, and $\phi$ is well defined on the quotient.

Let us show that this morphism is surjective. The strategy is the following: we take a topological path which represents a class in $\pi_1(\mathbb{R}\times M, F^{-1}(]-\infty,c])\cup\{\ast\},\ast)$, and show that it is always homotopic to an element of the image of $\phi$. We define this homotopy by first taking any component of the path that is in  $M\times\mathbb{R}\setminus]d^-_c;d^+_{c}[$ and contracting them onto paths in $M\times\{d^-_c;d^+_{c}\}$, which we may so as these two are homotopy equivalent. We then apply the flow of the following vector field:
\[ X = \nabla \tilde{F} + \rho \nabla h,\] where $\rho$ is a cutoff function that is $1$ on a neighborhood of $M\times[p_0,p_{k}]$, and $0$ outside of $M\times]d^-_c;d^+_{c}[$. In particular, in $M\times[p_0,p_{k}]$, this matches the anti-gradient flow of $F$. In $M\times\{d^-_c;d^+_{c}\}$, this matches the flow of $f_0$ and $f_k$ due to the split nature of the dynamics (up to a change in speed). Moreover, the flow does not push any points outside of $M•\times[d^-_c;d^+_{c}]$. We may assume that, before applying the flow, the path intersects all stable manifolds transversally, and the portions in $M\times\{d^-_c;d^+_{c}\}$ intersect the stable manifolds of critical points of $f_0$ and $f_k$ transversally as well. Therefore, the dimension of the intersection between the path and such a stable manifold is $1-\text{ind}_F(x)$ for critical points of $F$, and $1-\text{ind}_{f_0}(x)$ (resp.$1-\text{ind}_{f_k}(x))$ for critical points of $f_0$ (resp. $f_k$). In particular, it is only non empty if these indexes are smaller than 1. The flow therefore pushes down the path until it is caught in tubular neighborhoods of unstable manifolds of index 1 critical points (or 0 critical points), and we may then project onto these unstable manifolds. The path we obtain is an element $\tilde{\mathcal{L}(F,\ast)}$ which maps by $\phi$ to a path homotopic to the original one. 

Let us now show that the morphism is injective. Take two Morse paths $\ell_1$ and $\ell_2$ which have the same image by $\phi$, ie that are sent onto homotopic paths, which by definition of $\phi$, are contained in $M\times[p_0,p_{k+1}]$. We may suppose that $\ell_1$ and $\ell_2$ end in the same point by applying rule (\ref{relation:infty}) to one of them, which does not change the homotopy type of the image. Since the paths are homotopic, and have the same ends, they must bound a (not necessarily embedded) disk. We may perturb the disk away from its boundary so that it is transverse to the stable manifolds of critical points of $F$, and by the same argument as before, it therefore only intersects the unstable manifolds of dimension 2 or less. Then we flow down the disk by the vector field of the previous paragraph, until it is caught in tubular neighborhoods of unstable manifolds of dimension 2 or 1, and then project. Unstable manifolds of index 2 critical points correspond to relations between $\ell_1$ and $\ell_2$ as in (\ref{relation:boundary}), and unstable manifolds of index 1 critical points correspond to relations as in rule (\ref{relation:inverse}). We may therefore deduce that $\ell_1$ and $\ell_2$ are in the same equivalence class of $\pi_1^\text{Morse}(F,\ast)$. \end{proof}

\begin{cor} For an interpolation-type pair $(F,G)$, if $h$ has the same limit at $+\infty$ and $-\infty$, then $\pi_1^\text{Morse}(F,\ast)$ takes only the following values:
\[ \pi_1^\text{Morse}(F,\ast)=\begin{cases} \pi_1(M,\rho_M(\ast)), & \text{if $\lim_{|s|\rightarrow+\infty} h(s)=+\infty$,}\\
\{[\sigma],1\}, &\text{if $\lim_{|s|\rightarrow+\infty} h(s)=-\infty$.}
\end{cases}\]
where $\rho_M(\ast)$ is the projection of $\ast$ onto $M$, and $\sigma$ is a step which has one end at $-\infty$ and another at $+\infty$. Moreover, if $h$ has limits with opposite signs at $\pm\infty$, then if $\ast\in\text{Crit}_0(F)$, then $\pi_1^\text{Morse}(F,\ast)=\pi_1(M,\rho(\ast))\cup\{\sigma'\}$, where $\sigma'$ is a step which goes begins at $\ast$ and ends at $-\infty$. If not, then $\pi_1^\text{Morse}(F,\ast)=\{1\}$.
\end{cor}
\begin{proof} Per the previous theorem, $\pi_1^\text{Morse}(F,\ast)\cong \pi_1(\mathbb{R}\times M, F^{-1}(]-\infty,c])\cup\{\ast'\},\ast')$ for some constant $c$ smaller than all critical values of $F$. The set on the right side depends only on the topology of $M$, the topology of $F^{-1}(]-\infty,c])\cup\{\ast'\}$, and if $\ast\in F^{-1}(]-\infty,c])$,  on the connected component it belongs to. However, the topology of $F^{-1}(]-\infty,c])\cup\{\ast'\}$ depends only on the nature of $h$ at $\pm\infty$ and on if $\ast\in F^{-1}(]-\infty,c])$ (as well as the topology of $M$, which is already counted for). Furthermore, if $\ast_0$ is in one of the connected components of $F^{-1}(]-\infty,c])$ and $\ast_1$ is in another, then the path that goes along the $\mathbb{R}$ coordinate of $M\times\mathbb{R}$ from one connected component to another allows us to change base point from $\ast_0$ to $\ast_1$ and defines an isomorphism between $\pi_1(\mathbb{R}\times M, F^{-1}(]-\infty,c]),\ast_0)$ and $\pi_1(\mathbb{R}\times M, F^{-1}(]-\infty,c]),\ast_1)$. Hence, the value of $\pi_1^\text{Morse}(F,\ast)$ depends only on the topology of $M$, the behavior at infinity of $h$, and whether or not the base point is chosen at infinity. The only cases are therefore the ones discussed in Example \ref{ex:relative}.\end{proof}

In the case where $h(s)= s^3-\frac{3}{2}s^2$, as in Section \ref{section:continuation_map}, and the base point is at $-\infty$, $\pi_1(\mathbb{R}\times M, F^{-1}(]-\infty,c]),\ast')=\{1\}$ because $\mathbb{R}\times M$ retracts onto $F^{-1}(]-\infty,c])$. Therefore, $\pi_1^{\text{Morse}}(F,\ast)=\{1\}$ for any choice of $f_0$ and $f_1$. In particular, this implies that any loop of the form $\sigma \ell\sigma'^{-1}$, where $\sigma$ and $\sigma'$ are steps through minimums of $f_0$ and $\ell$ is a Morse loop for $f_1$, is trivial, regardless of if the loop $\ell$ is trivial in $\pi^{\text{Morse}}_1(f_1,\ast_1)$ for some minimum $\ast_1$ of $f_1$. It must therefore be "filled" by the unstable manifold of an index 1 critical point of $f_0$. This provides an alternative proof of the surjectivity of the morphism $\phi_{01}$ in Corollary \ref{cor:iso_morse}.

\end{document}